\definecolor{dblue}{rgb}{0,0,.6}
\numberwithin{equation}{section}
\newtheorem{theorem}{Theorem}[section]
\theoremstyle{plain}
\newtheorem{step}{Step}
\newtheorem{corollary}[theorem]{Corollary}
\newtheorem{lemma}[theorem]{Lemma}
\newtheorem{proposition}[theorem]{Proposition}
\newtheorem{remark}[theorem]{Remark}
\newcommand{\del}{\partial}
\newcommand{\Z}{\mathbb Z}
\newcommand{\Q}{\mathbb Q}
\newcommand{\C}{\mathbb C}
\newcommand{\RR}{\operatorname{R}}
\newcommand{\CP}{\mathbb P}
\newcommand{\im}{\operatorname{im}}
\newcommand{\Spec}{\operatorname{Spec}}
\newcommand{\Gal}{\operatorname{Gal}}
\newcommand{\pr}{\operatorname{pr}}
\newcommand{\CH}{\operatorname{CH}}
\newcommand{\cl}{\operatorname{cl}}
\newcommand{\Char}{\operatorname{char}}
  \newcommand{\coker}{\operatorname{coker}}
\newcommand{\tors}{\operatorname{tors}}
\newcommand{\et}{\text{\'et}}
\newcommand{\proet}{\text{pro\'et}}
\newcommand{\colim}{\operatorname{colim}}
\newcommand{\ord}{\operatorname{ord}}
\newcommand{\dashedlongrightarrow}{\xymatrix@1@=15pt{\ar@{-->}[r]&}}
\renewcommand{\longrightarrow}{\xymatrix@1@=15pt{\ar[r]&}}
\renewcommand{\mapsto}{\xymatrix@1@=15pt{\ar@{|->}[r]&}}
\renewcommand{\twoheadrightarrow}{\xymatrix@1@=15pt{\ar@{->>}[r]&}}
\newcommand{\hooklongrightarrow}{\xymatrix@1@=15pt{\ar@{^(->}[r]&}}
\newcommand{\congpf}{\xymatrix@1@=15pt{\ar[r]^-\sim&}}
\renewcommand{\cong}{\simeq}
\begin{document}    

\title[On Bloch's map for torsion cycles over non-closed fields]{On Bloch's map for torsion cycles over non-closed fields}

\author{Theodosis Alexandrou} 
\address{Institute of Algebraic Geometry, Leibniz University Hannover, Welfengarten 1, 30167 Hannover , Germany.}
\email{alexandrou@math.uni-hannover.de} 

\author{Stefan Schreieder} 
\address{Institute of Algebraic Geometry, Leibniz University Hannover, Welfengarten 1, 30167 Hannover , Germany.}
\email{schreieder@math.uni-hannover.de}

\date{March 29, 2023} 
\subjclass[2010]{primary 14C15, 14C25; secondary 14D060}
%

\keywords{zero-cycles, Roitman's theorem, non-closed fields}

 \begin{abstract}  
 We generalize Bloch's map on torsion cycles from algebraically closed fields to arbitrary fields.
While Bloch's map over algebraically closed fields is injective for zero-cycles and for cycles of codimension at most two, we show that the generalization to arbitrary fields is only injective for cycles of codimension at most two but in general not for zero-cycles.
Our result implies that Jannsen's cycle class map in integral $\ell$-adic continuous \'etale cohomology is in general not injective on torsion zero-cycles over finitely generated fields.
This answers a question of Scavia and Suzuki.
\end{abstract}

\maketitle

\section{Introduction}
Let $X$ be a smooth variety over a field $k$ and let $\ell$ be a prime invertible in $k$.
In this paper we study the $\ell$-power torsion subgroup $\CH^i(X)[\ell^\infty]$ of the Chow group $\CH^i(X)$ of codimension-$i$ cycles on $X$. 
Following some constructions in \cite{bloch-compositio,Sch-refined}, we define a  cycle map
\begin{align} \label{def:lambda}
\lambda_X^i:\CH^i(X)[\ell^\infty] \longrightarrow \frac{ H^{2i-1}(X_{\et},\Q_\ell/\Z_\ell(i))}{M^{2i-1}(X)} ,
\end{align} 
where $M^{2i-1}(X)\subset H^{2i-1}(X_{\et},\Q_\ell/\Z_\ell(i))$ is defined as follows:
we pick a finitely generated subfield $k_0\subset k$  such that there is a $k_0$-variety $X_0$ with $X=X_0\times_{k_0}k$ and let
$$
M^{2i-1}(X)=\im\left( \lim_{\substack{\longrightarrow \\ k'/k_0}} N^{i-1} H^{2i-1}_{cont}((X_0\times_{k_0}k')_{\et},\Q_\ell(i))\longrightarrow  H^{2i-1}(X_{\et},\Q_\ell/\Z_\ell(i)) \right),
$$
where $k'$ runs through all finitely generated subfields of $k$ that contain $k_0$ and $N^\ast$ denotes the coniveau filtration (cf.\ (\ref{def:N^j}) below).

If $k$ is algebraically closed and $X$ is projective, then $M^{2i-1}(X)=0$ by weight reasons (see Lemma \ref{lem:M^2i-1} below) and the above map coincides with Bloch's map \cite{bloch-compositio}; see Lemma \ref{lem:lambda_X^i=Bloch} below.
If moreover $k=\C$, the map agrees on the subgroup of homologically trivial $\ell$-power torsion cycles with Griffiths' Abel--Jacobi map \cite{griffiths}; see \cite[Proposition 3.7]{bloch-compositio}.

Bloch's map over algebraically closed fields is injective on torsion cycles of codimension $\leq 2$; the non-trivial case $i=2$ is a theorem of Bloch and Merkurjev--Suslin; see \cite[\S 18]{MS}. 
Moreover, for any ground field $k$, Colliot-Th\'el\`ene--Sansuc--Soul\'e showed in \cite[Corollaire 3]{CTSS} that $\CH^2(X)[\ell^\infty]$ is isomorphic to a subquotient of $H^3(X_\et,\Q_\ell/\Z_\ell(2))$. 
The following result generalizes the injectivity theorem of Bloch and Merkurjev--Suslin over algebraically closed fields by making the aforementioned result in \cite{CTSS} more precise; the argument relies on some results from \cite{Sch-refined}. 

\begin{theorem} \label{thm:Sch}
Let $X$ be a smooth variety over a field $k$ and let $\ell$ be a prime invertible in $k$. 
Then $\lambda_X^i$ from (\ref{def:lambda}) is injective 
 for $i=1,2$ 
and induces the following isomorphisms 
$$
\CH^1(X)[\ell^\infty] \cong    H^{1} (X_{\et},\Q_\ell/\Z_\ell(1))/M^1 (X) \ \ \text{and}\ \ \CH^2(X)[\ell^\infty] \cong    N^1H^{3} (X_{\et},\Q_\ell/\Z_\ell(2))/M^{3}(X).
$$
\end{theorem}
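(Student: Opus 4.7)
The plan is to treat the cases $i=1$ and $i=2$ separately, combining standard sheaf-cohomological identifications (Kummer, Merkurjev--Suslin, Bloch--Ogus, and Bloch's formula $\CH^i(X)\cong H^i(X,\mathcal{K}_i)$) with an analysis of what $M^{2i-1}(X)$ captures in the coniveau-filtered picture.

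For $i=1$, applying the Kummer sequence $1\to \mu_{\ell^n}\to \mathbb{G}_m\xrightarrow{\ell^n}\mathbb{G}_m\to 1$ on $X_\et$ and taking the colimit in $n$ yields
\[
0\to H^0(X,\OO_X^\times)\otimes_\Z \Q_\ell/\Z_\ell \to H^1(X_\et,\Q_\ell/\Z_\ell(1))\to \CH^1(X)[\ell^\infty]\to 0.
\]
The theorem then reduces to identifying $M^1(X)$ with the image of $H^0(X,\OO_X^\times)\otimes \Q_\ell/\Z_\ell$. Any global unit on $X$ is defined over some finitely generated $k'/k_0$, so its Kummer class lifts to $H^1_{cont}((X_0\times_{k_0}k')_\et,\Z_\ell(1))$, giving $M^1(X)\supseteq H^0(X,\OO_X^\times)\otimes \Q_\ell/\Z_\ell$. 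For the converse, one uses the Kummer extension $0\to \widehat{H^0(X',\OO^\times)}_\ell\to H^1_{cont}(X'_\et,\Z_\ell(1))\to T_\ell \Pic(X')\to 0$: after pulling back to $X$ and reducing modulo $\Z_\ell$, the Tate-module contribution becomes $\ell$-divisible and is absorbed by the quotient $\Pic(X)[\ell^\infty]$, so the image in $H^1(X,\Q_\ell/\Z_\ell(1))$ lies in the unit part. The inverse of the resulting isomorphism is $\lambda_X^1$ by a direct check against the construction in (\ref{def:lambda}).

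For $i=2$, the Bloch--Ogus coniveau spectral sequence identifies $N^1 H^3(X_\et,\Q_\ell/\Z_\ell(2))/N^2H^3\cong H^1(X,\mathcal{H}^2(\Q_\ell/\Z_\ell(2)))$ in Zariski cohomology. By Merkurjev--Suslin $\mathcal{H}^2(\mu_{\ell^n}^{\otimes 2})\cong \mathcal{K}_2/\ell^n$, whence $\mathcal{H}^2(\Q_\ell/\Z_\ell(2))\cong \mathcal{K}_2\otimes \Q_\ell/\Z_\ell$. The coefficient sequence $0\to \mathcal{K}_2\to \mathcal{K}_2\otimes_\Z \Q\to \mathcal{K}_2\otimes_\Z \Q/\Z\to 0$ on $X_{\mathrm{Zar}}$, combined with Bloch's formula $H^2(X,\mathcal{K}_2)\cong \CH^2(X)$, produces an exact sequence
\[
H^1(X,\mathcal{K}_2)\otimes \Q_\ell/\Z_\ell \longrightarrow H^1(X,\mathcal{K}_2\otimes \Q_\ell/\Z_\ell)\longrightarrow \CH^2(X)[\ell^\infty]\longrightarrow 0.
\]
Chaining these identifications produces a surjection from $N^1 H^3(X,\Q_\ell/\Z_\ell(2))$ onto $\CH^2(X)[\ell^\infty]$ after killing both $N^2 H^3$ and the image of $H^1(X,\mathcal{K}_2)\otimes \Q_\ell/\Z_\ell$. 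The task is then to identify the total kernel with $M^3(X)\cap N^1H^3$: the $N^2 H^3$ piece comes from integral classes supported on codimension-two subschemes, which already lie in $N^1$ integral cohomology and hence in $M^3(X)$; the contribution from $H^1(X,\mathcal{K}_2)\otimes \Q_\ell/\Z_\ell$ corresponds to $\Q_\ell$-divisible classes defined over a finitely generated subfield, which after taking the colimit over $k'$ are absorbed into $M^3(X)$ as well. Finally, one verifies that this chain of identifications realizes $\lambda_X^2$.

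The main obstacle is the precise matching of the subgroups arising in the Bloch--Ogus/Merkurjev--Suslin description with $M^{2i-1}(X)$, together with the verification that the resulting map is indeed the composite $\lambda_X^i$ from (\ref{def:lambda}). Over an algebraically closed field $M^{2i-1}$ vanishes for weight reasons (Lemma \ref{lem:M^2i-1}), so the extra bookkeeping here is a purely arithmetic phenomenon; the passage to the colimit over all finitely generated subfields $k'\supseteq k_0$ is what makes the ``twisting'' classes cancel, and I expect this step to rely crucially on the refined cycle-map techniques from \cite{Sch-refined}.
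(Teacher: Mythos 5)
Your proposal takes a genuinely different route from the paper, and while the high-level plan is sound, the hard step is exactly the one you defer at the end, so as written the argument has a real gap.

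The paper does not argue via Kummer theory, Bloch--Ogus, or the $\mathcal K_2$-sequence at all. Instead it derives Theorem \ref{thm:Sch} as a near-formal consequence of machinery already built up in \cite{Sch-refined}: the auxiliary map $\tilde\lambda_{tr}^i$ is constructed from the isomorphism $\psi_r$ of \cite[Lemma 8.1]{Sch-refined} (see (\ref{def:psi})), its kernel is shown in Proposition \ref{prop:tilde-lambda} to coincide with that of the transcendental Abel--Jacobi map $\lambda_{tr}^i$, which is then computed by \cite[Theorem 1.8(2)]{Sch-refined} in terms of refined unramified cohomology and vanishes for $i\leq 2$; its image is determined in Proposition \ref{prop:image-tilde-lambda}. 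Theorem \ref{thm:lambda-body} and Corollary \ref{cor:lambda-body} then pass to the direct limit over finitely generated subfields, which simultaneously produces $\lambda_X^i$ and the correcting subgroup $M^{2i-1}(X)$. The point is that injectivity and surjectivity onto $N^{i-1}H^{2i-1}/M^{2i-1}$ come out in the same stroke, with no need to match $M^{2i-1}$ against an externally-produced kernel.

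Your approach is closer to the original Colliot-Th\'el\`ene--Sansuc--Soul\'e strategy: for $i=1$ the Kummer sequence, and for $i=2$ the composite of Bloch--Ogus, Merkurjev--Suslin, and Bloch's formula $\CH^2\cong H^2(X,\mathcal K_2)$. This does produce, correctly, a surjection $N^1H^3(X_{\et},\Q_\ell/\Z_\ell(2))\twoheadrightarrow\CH^2(X)[\ell^\infty]$ whose kernel is the image of $H^1(X,\mathcal K_2)\otimes\Q_\ell$ (and similarly for $i=1$). The genuine gap is the identification of that kernel with $M^{2i-1}(X)$. You assert this by saying the $\Q_\ell$-divisible classes ``are absorbed into $M^3(X)$ after taking the colimit over $k'$,'' and for $i=1$ that the Tate-module contribution is ``absorbed,'' but neither inclusion is proved, and the reverse inclusion $M^{2i-1}(X)\subseteq\ker$ is not addressed. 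For $i=1$ over a non-closed field, showing that the image of $N^0H^1_{cont}(X_{k'},\Q_\ell(1))$ in $H^1(X_{\et},\Q_\ell/\Z_\ell(1))$ maps to zero in $\Pic(X)[\ell^\infty]$ requires the injectivity of $\Pic(X')[\ell^n]\to H^2_{cont}(X'_{\et},\Z_\ell(1))$ over finitely generated fields (Jannsen's theorem for divisors), which you do not invoke; and for $i=2$ you need a map $H^1(X_{k'},\mathcal K_2)\to N^1H^3_{cont}(X_{k'},\Q_\ell(2))$ compatible with the norm-residue identification, and then the equality of images, which is exactly the arithmetic bookkeeping the paper bypasses by working with $\tilde\lambda_{tr}^i$ throughout. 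You also correctly flag, but do not carry out, the verification that your chain of isomorphisms realizes the specific map $\lambda_X^i$ from (\ref{def:lambda}); since that map is built via Gysin boundaries and coniveau rather than via $\mathcal K$-cohomology, this is not automatic. In short: the architecture is plausible and in fact more elementary in spirit, but the matching of $M^{2i-1}(X)$ with the kernel --- the crux of the non-closed-field statement --- is asserted rather than proved, and is where the paper's machinery does its actual work.
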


In the body of this paper, we prove a version of the above theorem that works for arbitrary algebraic schemes; see Theorem \ref{thm:lambda-body} and Corollary \ref{cor:lambda-body}; to this end one has to replace in the above discussion ordinary cohomology by Borel--Moore cohomology; cf.\ \cite[\S 4]{Sch-refined}. 

Using some Lefschetz hyperplane argument, Bloch shows that the injectivity of $\lambda_X^2$ implies the injectivity of $\lambda_X^{\dim X}$ over algebraically closed fields (see \cite{bloch-compositio}), hence Roitman's theorem \cite{roitman} away from the characteristic.
Similarly, $\lambda_X^{\dim X}$ is known to be injective over finite fields; see e.g.\ \cite{KS,CTSS}.
On the other hand, even over algebraically closed fields, the map $\lambda_X^i$ is for $2<i<\dim X $ in general not injective; see e.g.\ \cite{schoen-product,totaro-JAMS,SV,RS,totaro-annals,Sch-griffiths,scavia-suzuki}.

Roitman's theorem on the injectivity of $\lambda_X^{\dim X}$ over algebraically closed fields turned out to be a very robust statement, with generalizations to  non-proper and even singular spaces; see e.g.\ \cite{levine-AJM,krishna-srinivas,geisser}.
As mentioned above, there is also a generalization to finite fields \cite{KS,CTSS}.
In light of 
the injectivity of $\lambda_X^2$ over arbitrary fields, it is natural to wonder if Roitman's theorem admits also a generalization to arbitrary fields, i.e.\ is $\lambda_{X}^{\dim X}$ always injective?
The main result of this paper answers this question negatively. 

\begin{theorem} \label{thm:main}
Let $\ell$ be a prime and let $k$ be a field of characteristic different from $\ell$.
Then there is a finitely generated field extension $K/k$ and a smooth projective threefold $X$ over $K$ such that $\lambda_X^3$ from (\ref{def:lambda}) is not injective. 
\end{theorem}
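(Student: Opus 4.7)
The plan is to construct a smooth projective threefold $X$ over a finitely generated extension $K/k$ together with a nonzero class $z \in \CH^3(X)[\ell]$ satisfying $\lambda_X^3(z) = 0$, i.e.\ whose Bloch image in $H^5(X_\et, \Q_\ell/\Z_\ell(3))$ lies in $M^5(X)$. A basic necessary condition on $z$: by Lemma~\ref{lem:M^2i-1}, $M^{2i-1}$ vanishes for smooth projective varieties over algebraically closed fields, so pullback of $M^5(X)$ to $X_{\bar K}$ is zero, and combined with Roitman's theorem over $\bar K$ (applied to $\lambda^3_{X_{\bar K}}$, which agrees there with the classical Bloch map) this forces $\bar z = 0$ in $\CH^3(X_{\bar K})$. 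Conversely, for any geometrically trivial $z$, the class $\lambda_X^3(z)$ lies in $\ker(H^5(X_K, \Q_\ell/\Z_\ell(3)) \to H^5(X_{\bar K}, \Q_\ell/\Z_\ell(3)))$, which by the Hochschild--Serre spectral sequence is built from positive-degree Galois cohomology groups $H^r(G_K, H^{5-r}(X_{\bar K}, \Q_\ell/\Z_\ell(3)))$; the goal is to arrange these Galois contributions to land inside the coniveau subgroup $M^5(X)$.

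A natural setting is $X = S \times_K C$ with $S$ a smooth projective surface and $C$ a smooth projective curve over $K$. The relevant pieces of the K\"unneth decomposition of $H^5(X_{\bar K}, \Q_\ell/\Z_\ell(3))$ are the two summands $H^4(S_{\bar K}) \otimes H^1(C_{\bar K})$ and $H^3(S_{\bar K}) \otimes H^2(C_{\bar K})$ (with the appropriate Tate twists), and both sit in coniveau $\geq 2$: in each summand one K\"unneth factor is supported on a point ($H^4$ of a surface or $H^2$ of a curve) so exterior products are supported on a curve in $X$, and on a smooth projective surface $H^3 = N^1 H^3$ by Bloch--Ogus so the other summand is also of the right coniveau. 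After spreading $S$ and $C$ to a model over a finitely generated subfield $k_0 \subset K$, both K\"unneth factors are contained in $N^2 H^5_{cont}(X_0 \times_{k_0} k', \Q_\ell(3))$ for every $k' \supseteq k_0$, and therefore contribute to $M^5(X)$. The construction of $z$ must now produce a torsion zero-cycle whose Bloch class is of one of these K\"unneth types, vanishes over $\bar K$, and survives over $K$.

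The principal obstacle is that a naive exterior product $z = [p] \times \zeta$ of a point of $S$ and a zero-cycle of $C$ cannot work, because any $\ell$-torsion cycle of this form that is nonzero over $K$ is automatically nonzero over $\bar K$ (one recovers $\zeta$ as a pushforward). One therefore needs a Galois-twisted or correspondence-based construction, for instance taking $z$ to be built from a cocycle $\alpha \in H^1(G_K, \Pic^0(C)[\ell])$ paired against a class $\beta \in \Br(S)[\ell]$ via a Merkurjev--Suslin-style pairing, producing an element of $\CH^3(X)$ that vanishes upon base change to $\bar K$ (where $\alpha$ and $\beta$ become cohomologically trivial) but survives over $K$. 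Proving that $z$ is actually nonzero in $\CH^3(X)$ is the crux of the argument: since we have deliberately placed its Bloch class inside $M^5(X)$, the ordinary cycle class map cannot see $z$, and we must invoke the refined unramified cohomology machinery of \cite{Sch-refined} to extract a finer invariant (e.g.\ valued in $H^3_{\mathrm{nr}}(K(X)/K, \mu_\ell^{\otimes 2})$) that detects the cup product $\alpha \cup \beta$ at the Galois-cohomology level. Once such an example is produced over one explicit finitely generated field (handled separately in each characteristic $\neq \ell$ as needed), a standard spreading and base-change argument extends the statement to a finitely generated extension of any field $k$ with $\Char k \neq \ell$.
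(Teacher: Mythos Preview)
Your proposal is a sketch, not a proof: the hardest step---constructing the class $z$ and proving it is nonzero---is flagged as ``the crux'' and then deferred to a speculative pairing of $\alpha \in H^1(G_K, \Pic^0(C)[\ell])$ with $\beta \in \Br(S)[\ell]$ whose very existence as an element of $\CH^3(X)$, and whose detection by unramified cohomology, are asserted rather than established. There is also a gap in your vanishing argument: your K\"unneth/coniveau analysis takes place over $\bar K$, but $M^5(X)$ is defined as the image of $N^2 H^5_{cont}$ with $\Q_\ell$-coefficients over \emph{finitely generated} subfields, and descending the coniveau filtration from $\bar K$ to such subfields is not automatic.

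The paper's route is quite different and, ironically, does use an exterior product. One takes $X = S_K \times_K E_K$ where $E_K$ is an elliptic curve with a class $\tau \in \CH_0(E_K)$ of exact order $\ell$, and $S$ is a smooth projective surface (over an intermediate field $k(\CP^1)$) carrying a zero-cycle $z_1$ with $\cl^2_S(z_1) \equiv 0 \pmod{\ell}$ yet $z_1 \notin \CH_0(S)_{\tors} + \ell\,\CH_0(S)$; such $(S, z_1)$ come from the Koll\'ar--Hassett--Tschinkel--Totaro failure of the integral Tate conjecture for hypersurfaces in $\CP^1 \times \CP^3$ (Proposition~\ref{prop:kollar}). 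Your objection to $[p] \times \zeta$ hinges on the pushforward along $X \to C$ having degree prime to $\ell$; here $\deg z_1$ equals the index of $S$, which is divisible by $\ell$, so that obstruction disappears. The vanishing $\lambda_X^3(z_1 \times \tau) = 0$ then follows directly from the product formula $\tilde\lambda_{tr}^{i_1+i_2}(z_1 \times \tau) = \pr_1^\ast\overline{\cl(z_1)} \cup \pr_2^\ast\tilde\lambda_{tr}^{i_2}(\tau)$ of Lemma~\ref{lem:lambda-z1xz2} (via Lemma~\ref{lem:lambda-z1xz2-2}), with no K\"unneth bookkeeping required. Nonvanishing of $z_1 \times \tau$ is the genuine content and is handled by an adaptation of Schoen's degeneration technique (Proposition~\ref{prop:schoen}): intersecting with a divisor supported on an $I_{\ell N}$ fibre of a minimal model of $E$ shows that $(z_1)_K \times \tau = 0$ would force $z_1 \in \CH_0(S)_{\tors} + \ell\,\CH_0(S)$, contrary to the choice of $z_1$.
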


The proof of the above result will rely on an adaptation of Schoen's argument in \cite{schoen-product} together with some results on the integral Hodge and Tate conjecture due to Koll\'ar, Hassett--Tschinkel, and Totaro. 
Taking products with projective spaces, we get the following.

\begin{corollary} \label{cor:lambda_X^i-injective}
Let $i\leq n$ be positive integers, let $k$ be a field and let $\ell$ be a prime invertible in $k$.
Then the map $\lambda_X^i$ from (\ref{def:lambda}) is injective for all smooth projective varieties $X$ of dimension $n$ 
 over all finitely generated field extension of $k$ if and only if $i\leq 2$. 
\end{corollary}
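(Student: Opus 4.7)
My plan is to combine Theorem \ref{thm:Sch} with Theorem \ref{thm:main} via a projective bundle construction.

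The ``if'' direction is immediate: by Theorem \ref{thm:Sch}, $\lambda_X^i$ is injective for $i\in\{1,2\}$ on every smooth variety over every field, and so in particular on all smooth projective $n$-folds over all finitely generated extensions of $k$.

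For the ``only if'' direction, fix integers $3 \leq i \leq n$. By Theorem \ref{thm:main}, there is a finitely generated field extension $K/k$ and a smooth projective threefold $X/K$ admitting a nonzero class $\alpha \in \ker \lambda_X^3$. Set $Y := X \times_K \mathbb{P}^{n-3}_K$ (with the convention $\mathbb{P}^0 = \Spec K$), which is a smooth projective $n$-fold over the finitely generated extension $K/k$. Let $p_1,p_2$ denote the projections, let $h \in \CH^1(\mathbb{P}^{n-3}_K)$ be the hyperplane class, and consider
\[
\beta \;:=\; p_1^*\alpha \cdot p_2^*(h^{i-3}) \;\in\; \CH^i(Y)[\ell^\infty].
\]
The projective bundle formula
\[
\CH^\ast(Y) \;=\; \bigoplus_{j=0}^{n-3} p_1^*\CH^{\ast-j}(X)\cdot p_2^*(h^j)
\]
shows that $\beta$ is nonzero, as its component in the $j=i-3$ summand equals $p_1^*\alpha$ and $p_1^*\colon \CH^3(X)\to \CH^3(X)\cdot p_2^*(h^{i-3})$ is injective.

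To see that $\lambda_Y^i(\beta)=0$, I would combine the analogous Künneth/projective-bundle decompositions of $H^{2i-1}(Y_\et,\Q_\ell/\Z_\ell(i))$ and of the subgroup $M^{2i-1}(Y)$ with the compatibility of $\lambda$ with flat pullback and with cup product by the integral algebraic class $p_2^*(h^{i-3})$. This yields
\[
\lambda_Y^i(\beta) \;=\; \lambda_X^3(\alpha)\cup p_2^*(h^{i-3}).
\]
Now $\lambda_X^3(\alpha)=0$ in $H^5/M^5$ means $\lambda_X^3(\alpha)$ lies in $M^5(X)$, i.e.\ lifts after enlarging $K$ to a class in $N^2 H^5_{cont}(X,\Q_\ell(3))$. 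Since $p_2^*(h^{i-3})$ is supported in codimension $i-3$, cupping with it raises coniveau by $i-3$, so the resulting class lies in $N^{i-1} H^{2i-1}_{cont}(Y,\Q_\ell(i))$ and hence in $M^{2i-1}(Y)$. Thus $\lambda_Y^i(\beta)=0$ in the quotient, while $\beta \neq 0$, proving non-injectivity of $\lambda_Y^i$.

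The main obstacle is the verification of the three compatibility properties used above: functoriality of $\lambda$ under flat pullback, multiplicativity with respect to cup product by an integral algebraic class pulled back from $\mathbb{P}^{n-3}$, and the matching projective-bundle decomposition of the subgroup $M^{2i-1}(Y)$. Each of these should follow formally from the corresponding properties of the étale cycle class map with finite coefficients and of the coniveau filtration on continuous $\ell$-adic cohomology, but this is the step that requires care; granting it, the argument above completes the proof.
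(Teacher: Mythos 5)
Your high-level plan — take products with projective spaces — matches what the paper does; the proof in the paper is a one-liner citing exactly this idea.  However, the route you take to the vanishing $\lambda_Y^i(\beta)=0$ is not the route the paper's own lemmas support without further work, and the gap you flag at the end is real, not cosmetic.

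The formula you want, $\lambda_Y^i\bigl(p_1^*\alpha\cdot p_2^*(h^{i-3})\bigr)=\lambda_X^3(\alpha)\cup p_2^*(h^{i-3})$, is close to but not identical with what Lemma~\ref{lem:lambda-z1xz2} gives.  That lemma computes $\tilde\lambda_{tr}$ of an exterior product $z_1\times z_2$ in terms of a \emph{cycle class} of $z_1$ cupped with a \emph{representative} of $\tilde\lambda_{tr}(z_2)$, and it proves the resulting relation at the level of actual cohomology classes, not modulo the coniveau/$M$-subgroup.  To go from ``$\lambda_X^3(\alpha)$ dies in $H^5/M^5(X)$'' to ``$\lambda_Y^i(\beta)$ dies in $H^{2i-1}/M^{2i-1}(Y)$'' you would need to show that cupping an element of (the image of) $N^2H^5_{cont}(X,\Q_\ell(3))$ with an algebraic class from $\mathbb{P}^{n-3}$ lands in $N^{i-1}H^{2i-1}_{cont}(Y,\Q_\ell(i))$ compatibly with the limit over finitely generated subfields defining $M^{2i-1}$.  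This is plausible, and is what you sketch, but it is not a lemma in the paper and is precisely the point you acknowledge leaving open.

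There is a cleaner path that avoids this entirely and uses only what the paper already proves.  The kernel element produced in Theorem~\ref{thm:main} has the explicit shape $\alpha=z_1\times\tau$ with $z_1\in\CH_0(S)$ satisfying $\cl_S^2(z_1)\equiv 0\ (\mathrm{mod}\ \ell)$ and $\tau\in\CH_0(E_K)$ of order $\ell$.  Re-associate the product: set $Y=(S_K\times_K\mathbb{P}^{n-3}_K)\times_K E_K$ and $\beta=(z_1\times[L])\times\tau$, where $L\subset\mathbb{P}^{n-3}$ is a linear subspace of codimension $i-3$.  Then $\cl(z_1\times[L])=\cl(z_1)\times\cl([L])$ is still zero mod~$\ell$, so Lemma~\ref{lem:lambda-z1xz2-2} applies directly and yields $\lambda_Y^i(\beta)=0$.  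Non-vanishing of $\beta$ follows from Proposition~\ref{prop:schoen} applied with the $k(B)$-variety $S\times\mathbb{P}^{n-3}$ in place of $S$: the kernel of the exterior product with $\tau$ is controlled by torsion, and $z_1\times[L]$ is not in $\CH(S\times\mathbb{P}^{n-3})_{\tors}+\ell\cdot\CH(S\times\mathbb{P}^{n-3})$, which is exactly your projective bundle formula computation.  This version uses nothing beyond what the paper has already established.

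So: same strategy, correct ``if'' direction, correct choice of $Y$ and $\beta$, correct non-vanishing argument; but the vanishing step as you wrote it hinges on an unproven multiplicativity/coniveau compatibility for $\lambda$, and the intended argument is better obtained by exploiting the product structure of the counterexample from Theorem~\ref{thm:main} so that Lemma~\ref{lem:lambda-z1xz2-2} applies as is.
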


We will see in Lemma \ref{lem:lambda-cl} below that over any field, the restriction of Jannsen's cycle class map in continuous \'etale cohomology \cite{jannsen} to $\ell$-power torsion cycles,
 \begin{align} \label{def:cl-jannsen}
\cl^i_X:\CH^i(X)[\ell^\infty] \longrightarrow H^{2i}_{cont}(X_{\et}, \Z_\ell(i)) ,
\end{align}
 factors through $\lambda_X^i$.
In particular, (\ref{def:lambda}) refines Jannsen's cycle class map on torsion cycles. 
As a consequence of Theorem \ref{thm:main}, we immediately get the following, which answers \cite[Question 1.7(a)]{scavia-suzuki}.

\begin{corollary} \label{cor:integral-Jannsen}
For any $n\geq 3$ and any prime $\ell$, there is a smooth projective $n$-fold $X$ over a finitely generated field of characteristic different from $\ell$ such that Jannsen's cycle class map
$$
\cl^n_X:\CH^n(X)[\ell^\infty] \longrightarrow H^{2n}_{cont}(X_{\et}, \Z_\ell(n)) 
$$
is not injective on the subgroup of $\ell$-power torsion classes.
\end{corollary}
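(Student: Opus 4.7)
The plan is to deduce the corollary from two ingredients that are already in place: an existence result for a smooth projective $n$-fold $X$ with $\lambda_X^n$ non-injective, together with the factorization $\cl_X^n = \bar\beta \circ \lambda_X^n$ announced as Lemma~\ref{lem:lambda-cl} in the excerpt. Granting both, the corollary is a short diagram chase: any nonzero $\alpha \in \ker(\lambda_X^n)$ automatically satisfies $\cl_X^n(\alpha) = \bar\beta(0) = 0$, so $\cl_X^n$ inherits a non-trivial kernel on $\CH^n(X)[\ell^\infty]$.

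For the existence of $X$, I would invoke Corollary~\ref{cor:lambda_X^i-injective} with $i = n \geq 3$: its ``only if'' direction produces precisely such an $X$ over a finitely generated field of characteristic different from $\ell$. Under the hood, one takes the threefold $X_0/K$ furnished by Theorem~\ref{thm:main} and forms $X := X_0 \times_K \mathbb{P}_K^{n-3}$; the projective bundle formulas for $\CH^\bullet$ and for continuous \'etale cohomology then transport a nonzero class in $\ker(\lambda_{X_0}^3)$ to a nonzero class in $\ker(\lambda_X^n)$ via the assignment $\alpha \mapsto \alpha \times h^{n-3}$, where $h$ denotes the hyperplane class on $\mathbb{P}^{n-3}$.

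For the factorization itself, the relevant map $\bar\beta$ is induced on the quotient $H^{2n-1}(X_\et,\Q_\ell/\Z_\ell(n))/M^{2n-1}(X)$ by the Bockstein attached to the short exact sequence $0 \to \Z_\ell(n) \to \Q_\ell(n) \to \Q_\ell/\Z_\ell(n) \to 0$. It descends to the quotient because $M^{2n-1}(X)$ lies in the image of $\Q_\ell$-cohomology by construction and is therefore contained in $\ker(\bar\beta)$; a standard compatibility check in the spirit of~\cite{bloch-compositio} then yields the identity $\cl_X^n = \bar\beta \circ \lambda_X^n$. I do not anticipate any genuine obstacle: all of the substantive content is absorbed into Theorem~\ref{thm:main}, and Corollary~\ref{cor:integral-Jannsen} itself is essentially a formal consequence once the factorization is on the table.
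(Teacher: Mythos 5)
Your proposal is correct and follows essentially the same route as the paper: the paper's one-line proof cites Lemma~\ref{lem:lambda-cl} for the factorization $\cl_X^n$ through $\lambda_X^n$ and Theorem~\ref{thm:main} for the non-injective example (with the product-with-$\mathbb{P}^{n-3}$ step for $n>3$ left implicit, which you make explicit via Corollary~\ref{cor:lambda_X^i-injective}). Your sketch of why the Bockstein descends to the quotient is only an informal gloss on what Lemma~\ref{lem:lambda-cl} already provides, so no new work is needed there.
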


Jannsen showed that in the case of divisors, i.e.\ codimension one cycles, his cycle class map for smooth projective varieties over finitely generated fields is injective integrally; see \cite[Remark 6.15(a)]{jannsen} and \cite[(P7.2) in Proposition 6.6]{Sch-refined}.
He further conjectured that after tensoring with $\Q$, his cycle class map is injective for cycles of arbitrary codimension over finitely generated fields.
Scavia and Suzuki exhibited in \cite{scavia-suzuki} several interesting examples that show that tensoring with $\Q$ is really necessary, i.e.\ the integral Jannsen conjecture fails in general. 
The main case left open by \cite{scavia-suzuki} is that of zero-cycles, that we deal with in this paper.
The case of zero-cycles is of particular importance, because injectivity of $\lambda_{X}^{\dim X}$ on $\ell$-power torsion zero-cycles for all smooth projective varieties $X$ over finitely generated fields  would imply the Rost nilpotence conjecture in characteristic zero by \cite{diaz} and the arguments in \cite[\S 9.4]{Sch-refined} and \cite[\S B.2]{Sch-moving} (see also \cite{rosenschon-sawant}).

For the aforementioned application to the Rost nilpotence conjecture it would be enough to prove injectivity of $\lambda_{X}^{\dim X}$ for varieties with a $k$-rational point (essentially because the argument in \cite{diaz} proceeds via base change to the function field, where a rational point is automatic).
The examples in Theorem \ref{thm:main} have no rational point.
However, a variant of our construction gives (at least for the prime $\ell=2$) examples with rational points as well, which puts an end to the hope that the Rost nilpotence conjecture may be a consequence of the injectivity of $\lambda^{\dim X}$ on smooth projective varieties with a rational point.

\begin{theorem}\label{thm:main-2}
There is a finitely generated field $k$ of characteristic zero and a smooth projective threefold $X$ over $k$ such that $X$ has a $k$-rational point and $\lambda_X^3$ from (\ref{def:lambda}) is not injective for the prime $\ell=2$.
\end{theorem}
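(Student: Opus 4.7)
The strategy is to adapt the proof of Theorem~\ref{thm:main}, modifying the construction so that the resulting threefold acquires a $k$-rational point, at the cost of restricting to $\ell=2$. In that earlier proof, following Schoen, one produces a threefold $X$ as a carefully chosen family --- typically a twisted product of an elliptic curve with a surface, or a conic/quadric bundle --- over a base whose geometry forces a failure of an integral Hodge- or Tate-type statement for codimension two cycles. Combined with Theorem~\ref{thm:Sch} and the description of $M^{2i-1}$ as a coniveau-generated subgroup of continuous \'etale cohomology, this failure produces a non-trivial class in the kernel of $\lambda_X^3$. The reason the bare construction of Theorem~\ref{thm:main} need not admit a $k$-point is that a non-trivial torsor twist is used to activate the relevant failure of integral Hodge/Tate.

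For $\ell=2$ this obstacle can be dissolved. I would replace the twisted product by a conic or quadric bundle $Y\to B$ of the type studied by Koll\'ar, Hassett--Tschinkel, and Totaro, in which the integral Hodge conjecture in codimension two fails because of a non-trivial $2$-torsion Brauer class. Such bundles admit rational sections in many natural cases: after possibly enlarging the base field $k$ to a finitely generated extension (retaining characteristic zero), one can arrange that the fibre over a chosen $k$-point of $B$ has a smooth $k$-rational point, which yields a $k$-point of $Y$. Spreading out this construction produces the desired threefold $X$ over a finitely generated field of characteristic zero.

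To extract non-injectivity of $\lambda_X^3$ I would repeat the mechanism of Theorem~\ref{thm:main}: combining Theorem~\ref{thm:Sch} with the explicit description of $M^3$ shows that $\ker(\lambda_X^3)$ is controlled by the discrepancy between $H^3(X_\et,\Q_2/\Z_2(2))$ and its maximal coniveau subgroup defined over finitely generated intermediate fields, and the specific $2$-torsion failure of integral Hodge/Tate produces a class detected precisely there. One must verify that the $k$-point introduced in the previous step does not inadvertently kill this class, i.e.\ that the decomposition-of-the-diagonal type arguments underlying the mechanism remain intact.

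The main obstacle is reconciling the two constraints simultaneously: the geometric feature responsible for the non-injectivity of $\lambda_X^3$ typically requires a non-trivial Brauer class whose presence is incompatible with the existence of a section in general, but the relative simplicity of $2$-torsion Brauer classes (which arise from square-class symbols) allows one to prescribe a rational section by hand while keeping the class non-trivial. This is essentially the only reason the argument is restricted to $\ell=2$, since comparable constructions of integral Hodge failure compatible with rational points at odd primes are not presently available.
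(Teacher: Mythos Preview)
Your outline has the right spirit but misses the actual architecture of the argument, and in particular contains a dimensional mismatch that would prevent it from going through as stated.

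You propose to take a conic bundle threefold $Y\to B$ on which the integral Hodge conjecture fails in codimension two. But codimension two on a threefold means $1$-cycles, whereas $\lambda_X^3$ concerns zero-cycles. There is no direct mechanism in the paper that converts a failure of integral Hodge for $1$-cycles on a threefold into a nonzero $2$-torsion zero-cycle in $\ker\lambda_X^3$; the appeal to ``the mechanism of Theorem~\ref{thm:main}'' does not bridge this gap. In the paper the threefold is not a conic bundle over a surface, but rather a product $X=Y_K\times_K E$ where $Y$ is a \emph{surface} and $E$ is an elliptic curve over $K=k(\CP^1)$. The input is a surface $Y$ with a $k$-rational point on which the cycle class map $\CH^2(Y)/2\to H^4(Y,\mu_2^{\otimes 2})$ for zero-cycles is not injective; this comes from Parimala--Suresh (a conic fibration over a hyperelliptic curve, originally over $\Q_3$, then spread out to a finitely generated subfield), not from the Koll\'ar/Hassett--Tschinkel/Totaro hypersurfaces used in Proposition~\ref{prop:kollar}. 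One then crosses with the Legendre elliptic curve and applies the Schoen-type exterior product injectivity of Proposition~\ref{prop:schoen-2} to see that $z_K\times\tau$ is a nonzero $2$-torsion zero-cycle, and Lemma~\ref{lem:lambda-z1xz2-2} to see that $\lambda_X^3$ kills it. The rational point on $X$ is automatic because both $Y$ and $E$ have one.

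Your discussion of Brauer classes and the tension between ``non-trivial torsor twist'' and ``existence of a section'' is therefore beside the point: in the Parimala--Suresh example the surface already has a rational point while the mod-$2$ cycle class map on zero-cycles remains non-injective, and the elliptic curve factor contributes its rational point for free. The restriction to $\ell=2$ in the paper is not about square-class Brauer symbols being compatible with sections, but simply about the availability of Parimala--Suresh's surface and the clean $I_2$-fibre argument in Proposition~\ref{prop:schoen-2} (where $N=1$, so no torsion correction term appears).
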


The examples in Theorem \ref{thm:main} have Kodaira dimension 2,  
while those in Theorem \ref{thm:main-2} have negative Kodaira dimension, as they are conic bundles over surfaces of positive Kodaira dimension.
The question whether $\lambda_X^{\dim X}$ is injective for smooth projective varieties $X$ that are geometrically rationally connected remains open.
 (This question is interesting for the base change $X_{k(X)}$ of any rationally connected variety $X$ over an algebraically closed field $k$ which does not admit an integral decomposition of the diagonal, for examples of the latter see e.g.\ the survey \cite{Sch-survey} and the references therein.) 

\section{Preliminaries}

We recall some of the notation and conventions from \cite{Sch-refined} that we will use.
An algebraic scheme is a separated scheme of finite type over a field.
A variety is an integral algebraic scheme.
A field is finitely generated if it is finitely generated over its prime field.
The $n$-torsion subgroup of an abelian group $G$ is denoted by $G[n]$; the subgroup of elements annihilated by some power of $n$ is denoted by $G[n^\infty]$.
If $\varphi:H\to G$ is a morphism of abelian groups, we denote by slight abuse of notation $G/H:=\coker(\varphi)$.

We fix a field $k$ and a prime $\ell$ invertible in $k$.
For an algebraic scheme $X$ of dimension $d_X$ over $k$, we denote by $\CH^i(X):=\CH_{d_X-i}(X)$ the Chow group of cycles of dimension $d_X-i$ on $X$.
For $A\in \{\Z/\ell^r,\Z_\ell,\Q_\ell,\Q_\ell/\Z_\ell\}$, we let
$$
H^i(X,A(n)):=H^i_{BM}(X,A(n))
$$
be twisted Borel--Moore pro-\'etale cohomology; see \cite[(6.13)-(6.15)]{Sch-refined} and \cite[Proposition 6.6]{Sch-refined}.
%
Some of the most important properties of this functor are collected in \cite[Section 4]{Sch-refined}.
If $X$ is smooth and equi-dimensional, then we have  canonical identifications
\begin{align}\label{eq:ordinary-cohomology}
H^i(X,\Z/\ell^r(n))\cong H^i(X_{\et},\mu_{\ell^r}^{\otimes n}),\ \ H^i(X,\Q_\ell/\Z_\ell(n))=\lim_{\substack{\longrightarrow \\ r}} H^i(X_{\et},\mu_{\ell^r}^{\otimes n}),
\end{align}
$$
H^i(X,\Z_\ell(n))\cong H^i_{cont}(X_{\et},\Z_{\ell}(n)),\ \ \text{and}\ \ H^i(X,\Q_\ell(n))\cong H^i_{cont}(X_{\et},\Q_{\ell}(n))=H^i_{cont}(X_{\et},\Z_{\ell}(n))\otimes_{\Z_\ell} \Q_\ell;
$$
see \cite[Lemma 6.5]{Sch-refined}.

For $j\geq 0$, we denote by $F_jX$ the pro-scheme given by the inverse limit of all open subsets $U\subset X$ with $\dim (X\setminus U)<\dim X-j$.
If $U\hookrightarrow X$ is an open immersion with $\dim U=\dim X$, then there are restriction maps $H^i(X,A(n))\to H^i(U,A(n))$.
As in \cite[Section 5]{Sch-refined}, we define 
$$
H^i(F_jX,A(n)):=\lim_{\substack{\longrightarrow\\ U\subset X}}H^i(U,A(n)) ,
$$ 
where $U$ runs through all open subsets of $X$ that make up the pro-scheme $F_jX$ above.
The coniveau filtration $N^\ast$ on $H^i(X,A(n))$ is then given by
\begin{align} \label{def:N^j}
N^jH^i(X,A(n)):=\ker(H^i(X,A(n))\to H^i(F_{j-1}X,A(n))) ;
\end{align}
see \cite[(5.1)]{Sch-refined}.

For $m\geq j$, there are natural restriction maps
$
H^i(F_mX,A(n))\to H^i(F_jX,A(n))
$
and we denote the image of this map by $F^m H^i(F_jX,A(n))$; see \cite[Definition 5.3]{Sch-refined}.
For a scheme point $x\in X$, we let $H^i(x,A(n)):=H^i(F_0\overline{\{ x\}},A(n))$ where $\overline{\{ x\}}\subset X$ denotes the closure of $x$; note that  $H^0(x,A(0))=A\cdot [x]$, where $[x]\in H^0(x,A(0))$ denotes the fundamental class of $x$; cf.\ \cite[(P3) in Definition 4.2 and Proposition 6.6]{Sch-refined}.

The Gysin sequence induces the following important long exact sequence (see \cite[Lemma 5.8]{Sch-refined})
\begin{align} \label{eq:les}
 \longrightarrow
   H^i(F_jX,A(n))\longrightarrow   H^i(F_{j-1}X,A(n))  \stackrel{\del}\longrightarrow \bigoplus_{x\in X^{(j)}}H^{i+1-2j}(x,A(n-j)) \stackrel{\iota_\ast} \longrightarrow    H^{i+1}(F_jX,A(n)) .
\end{align} 
Since $H^i(x,A(n))=0$ for $i<0$, we deduce $H^i(X,A(n))=H^i(F_jX,A(n))$ for $j\geq \lceil i/2\rceil$; see \cite[Corollary 5.10]{Sch-refined}. 

 Let $\CH^i(X)_{\Z_\ell}:=\CH^i(X)\otimes_{\Z} \Z_\ell$. 
Then there is a cycle class map
$$
\cl^i_X:\CH^i(X)_{\Z_\ell}\longrightarrow H^{2i}(X,\Z_\ell(i));
$$  
see \cite[(7.1)]{Sch-refined}.
This map is induced by the following pushforward map that appears in the above long exact sequence (\ref{eq:les}):
$$
 \bigoplus_{x\in X^{(i)}}H^{0}(x,\Z_\ell(0))= \bigoplus_{x\in X^{(i)}} \Z_\ell[x] \stackrel{\iota_\ast} \longrightarrow    H^{2i}(F_iX,\Z_\ell(i))=H^{2i}(X,\Z_\ell(i)).
$$
If $X$ is smooth and equi-dimensional, then the above cycle class map agrees with Jannsen's cycle class map in continuous \'etale cohomology from \cite{jannsen}; see \cite[Lemma 9.1]{Sch-refined}.

There is a natural coniveau filtration $N^\ast$ on $\CH^i(X)_{\Z_\ell}$, given by the condition that a cycle $[z]$ lies in $N^j$ if and only if  
there is a closed subset $Z\subset X$ of codimension $j$ such that $z$ is rationally equivalent to a homologously trivial cycle on $Z$, i.e.\  
$$
[z]\in \im(\ker(\cl_Z^{i-j})\longrightarrow \CH^i(X)_{\Z_\ell} ) ;
$$
see \cite[Definition 7.3]{Sch-refined}.
In view of \cite[Definition 7.2 and Lemma 7.4]{Sch-refined}, we define
$$
A^i(X)_{\Z_\ell}:=\CH^i(X)_{\Z_\ell}/N^{i-1}\CH^i(X)_{\Z_\ell} \ \ \text{and}\ \ A^i(X)[\ell^\infty]:=A^i(X)_{\Z_\ell}[\ell^\infty].
$$
If the field $k$ is finitely generated, then $A^i(X)_{\Z_\ell}=\CH^i(X)_{\Z_\ell}$ by \cite[Lemma 7.5]{Sch-refined}.

\begin{remark} \label{rem:BM-coho}
Let $X$ be an algebraic $k$-scheme of dimension $d_X$.
Assume that there is a closed embedding $\iota:X\hookrightarrow Y$ into a smooth equi-dimensional algebraic $k$-scheme $Y$ of dimension $d_Y$.
Then the Borel--Moore cohomology groups
$ 
H^i(X,A(n)) =H_{BM}^i(X,A(n))
$ 
above may be identified with ordinary cohomology groups with support as follows:
$$
H_{BM}^i(X,A(n))=H^{i+2c}_X(Y_\proet, \widehat A(n+c)) ,
$$
where $c=d_Y-d_X$; see also \cite[Appendix A]{Sch-moving}.
We explain the above identification in the case where $A=\Z_\ell$; the general case is similar.
To this end, let $\pi_X:X\to \Spec k$ be the structure map and note that by definition in \cite[(6.13)-(6.15)]{Sch-refined} we have 
$$
H_{BM}^i(X,\Z_\ell(n))=\RR^{i-2d_X}\Gamma(X_\proet, \pi_X^! \widehat \Z_\ell(n-d_X)).
$$ 
If $\pi_Y:Y\to \Spec k$ denotes the structure morphism, then $\pi_X=\pi_Y\circ \iota$ and so
$$
\pi_X^!=(\pi_Y\circ \iota)^!=\iota^!\circ \pi_Y^!=\iota^!\circ \pi_Y^\ast(d_Y)[2d_Y],
$$
where we used that $\pi_Y^!=\pi_Y^\ast(d_Y)[2d_Y]$ by Poincar\'e duality; see e.g.\ \cite[Lemma 6.1(4)]{Sch-refined}.
Hence,
$$
H^i_{BM}(X,\Z_\ell(n))= \RR^{i-2d_X+2d_Y}\Gamma(Y_\proet, \iota^! \widehat \Z_\ell(n-d_X+d_Y))=H^{i+2c}_X(Y_\proet,\Z_\ell(n+c)),
$$
where $c:=d_Y-d_X$, and where the above right hand side denotes ordinary pro-\'etale cohomology with support; see \cite[(A.6)]{Sch-moving} and \cite{BS}.
\end{remark}

\section{An auxiliary cycle map}

\subsection{Construction of  \texorpdfstring{$\tilde \lambda_{tr}^i$}{tilde lambda}} \label{subsec:constr-lambda-tilde}
In this subsection we construct a map $\tilde\lambda_{tr}^i$ which is closely related to the transcendental Abel--Jacobi map in \cite{Sch-refined} and which will be used in Section \ref{subsec:constr-lambda} below to construct $\lambda_X^i$ from (\ref{def:lambda}).

To begin with we will need the following simple lemma.

\begin{lemma} \label{lem:N^i-1}
Let $X$ be an algebraic scheme over a field $k$ and let $A\in \{\Z/\ell^r,\Z_\ell,\Q_\ell,\Q_\ell/\Z_\ell\}$ where $\ell$ is a prime invertible in $k$.
Then the natural restriction map $H^{2i-1}(X,A(i))\to H^{2i-1}(F_{i-1}X,A(i)) $ is injective.
Using this to identify each $N^jH^{2i-1}(X,A(i))$ with a subgroup of  $H^{2i-1}(F_{i-1}X,A(i))$, we get a natural isomorphism 
\begin{align}\label{eq:N}
\iota_\ast \left( \ker\left(\del\circ \iota_\ast:\bigoplus_{x\in X^{(i-1)}}H^1( x,A(1)) \to  \bigoplus_{x\in X^{(i)}}[x]A \right) \right)\cong  N^{i-1}H^{2i-1}(X,A(i))   .
\end{align}
\end{lemma}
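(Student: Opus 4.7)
The plan is to extract both assertions directly from the Gysin long exact sequence (\ref{eq:les}) applied at two consecutive coniveau levels.

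For the injectivity claim, I first invoke the identification $H^{2i-1}(X,A(i)) = H^{2i-1}(F_iX,A(i))$ from \cite[Corollary 5.10]{Sch-refined}, valid since $i \geq \lceil (2i-1)/2\rceil$. Applying (\ref{eq:les}) with $j=i$, twist $n=i$, and cohomological degree $2i-1$, the term to the left of $H^{2i-1}(F_iX,A(i))$ is $\bigoplus_{x\in X^{(i)}} H^{-1}(x,A(0))$, which vanishes because cohomology in negative degrees is zero. Thus the restriction $H^{2i-1}(F_iX,A(i)) \to H^{2i-1}(F_{i-1}X,A(i))$ is injective, proving the first assertion. Reading one step further in the same long exact sequence identifies the image inside $H^{2i-1}(F_{i-1}X,A(i))$ with $\ker(\del)$, where
$$
\del\colon H^{2i-1}(F_{i-1}X,A(i)) \to \bigoplus_{x\in X^{(i)}} H^0(x,A(0)) = \bigoplus_{x\in X^{(i)}} A[x].
$$

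For the isomorphism in (\ref{eq:N}), I unpack $N^{i-1}H^{2i-1}(X,A(i))$ by definition as the kernel of the restriction $H^{2i-1}(X,A(i)) \to H^{2i-1}(F_{i-2}X,A(i))$. Factoring this composition through $H^{2i-1}(F_{i-1}X,A(i))$, and using the identification $H^{2i-1}(X,A(i)) \cong \ker(\del)$ from the first step, the subgroup $N^{i-1}H^{2i-1}(X,A(i))$ corresponds to the intersection of $\ker(\del)$ with the kernel of $H^{2i-1}(F_{i-1}X,A(i)) \to H^{2i-1}(F_{i-2}X,A(i))$. Applying (\ref{eq:les}) now at $j = i-1$ with cohomological degree $2i-2$ and twist $i$, this second kernel is precisely the image of $\iota_\ast\colon \bigoplus_{x\in X^{(i-1)}} H^1(x,A(1)) \to H^{2i-1}(F_{i-1}X,A(i))$. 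Consequently $N^{i-1}H^{2i-1}(X,A(i))$ equals $\iota_\ast\bigl(\ker(\del\circ\iota_\ast)\bigr)$, as claimed.

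The argument is a purely formal diagram chase and presents no substantial obstacle; the only care needed is to distinguish the two instances of $\del$ that appear — one at coniveau level $j=i$, which cuts out $H^{2i-1}(X,A(i))$ inside $H^{2i-1}(F_{i-1}X,A(i))$, and one at level $j = i-1$ which governs the coniveau filtration — and to verify that they compose correctly in the statement of (\ref{eq:N}).
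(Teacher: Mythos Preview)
Your proof is correct and follows essentially the same approach as the paper: both arguments extract the two claims from the long exact sequence (\ref{eq:les}) applied at levels $j=i$ and $j=i-1$, using the vanishing of cohomology in negative degrees for the injectivity. The only cosmetic differences are that you invoke \cite[Corollary 5.10]{Sch-refined} directly for $H^{2i-1}(X,A(i)) = H^{2i-1}(F_iX,A(i))$ (the paper re-derives this inline), and your explicit identification of $H^{2i-1}(X,A(i))$ with $\ker(\del)$ inside $H^{2i-1}(F_{i-1}X,A(i))$ lets you conclude by the purely formal identity $\ker(\del)\cap \im(\iota_\ast)=\iota_\ast(\ker(\del\circ\iota_\ast))$, whereas the paper phrases this step via compatibility with proper pushforwards.
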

\begin{proof}
By \eqref{eq:les}, $H^{2i-1}(F_{j}X,A(i))\to H^{2i-1}(F_{j-1}X,A(i)) $ is injective for all $j\geq i$, as $H^{l}(x,A(n))=0$ for $l<0$ and $x\in X$.
Since $F_{j}X=X$ for $j>\dim X$, we deduce that
$H^{2i-1}(X,A(i))\to H^{2i-1}(F_{i-1}X,A(i)) $ is injective, which proves the injectivity claim in the lemma.

Note next that the following sequence is exact by \eqref{eq:les}:
$$
\bigoplus_{x\in X^{(i-1)}} H^{1}(x,A(1))\overset{\iota_{\ast}}{\longrightarrow} H^{2i-1}(F_{i-1}X,A(i))\longrightarrow H^{2i-1}(F_{i-2}X,A(i)) .
$$ 
By the compatibility of the Gysin long exact sequence with proper pushforwards (see \cite[(P2)]{Sch-refined}), we find that 
$$
\iota_\ast \left( \ker\left(\del\circ \iota_\ast:\bigoplus_{x\in X^{(i-1)}}H^1( x,A(1)) \to  \bigoplus_{x\in X^{(i)}}[x]A \right) \right)\subset H^{2i-1}(F_{i-1}X,A(i))
$$
agrees with the image of the map $N^{i-1}H^{2i-1}(X,A(i))\to H^{2i-1}(F_{i-1}X,A(i))$.
This concludes the proof of the lemma.
\end{proof}

Let $X$ be an algebraic $k$-scheme and let $\ell$ be a prime invertible in $k$. By \cite[Lemma 8.1]{Sch-refined}, there is a canonical isomorphism \begin{align} \label{def:psi}
\psi_r:
A^i(X)[\ell^r] \stackrel{\cong}\longrightarrow  \frac{\ker\left(\del\circ \iota_\ast:\bigoplus_{x\in X^{(i-1)}} H^1(x,\mu_{\ell^r}^{\otimes 1})  \longrightarrow  \bigoplus_{x\in X^{(i)}} [x]\Z/\ell^r \right) }{\ker\left(\del\circ \iota_\ast:\bigoplus_{x\in X^{(i-1)}}H^1( x,\Z_{\ell}(1)) \longrightarrow  \bigoplus_{x\in X^{(i)}}[x]\Z_\ell \right)} ,
\end{align}
which we describe in what follows explicitly.
To this end, note that an $\ell^r$-torsion class in $A^i(X)_{\Z_\ell}$ corresponds to a cycle 
$[z]\in \CH^i(X)_{\Z_\ell}$ such that $\ell^r[z]\in N^{i-1}\CH^i(X)_{\Z_\ell}$.
By \cite[Definition 7.2 and Lemma 7.4]{Sch-refined}, this means that there is a class
$$
\xi\in \bigoplus_{x\in X^{(i-1)}}H^1( x,\Z_{\ell}(1)) 
$$
with $\del (\iota_{\ast}\xi)=\ell^r \cdot z$.
It follows that the reduction $\bar \xi$  of $\xi$ modulo $\ell^r$ has trivial residues and we have $\psi_r([z])=[\bar{\xi}]$.

Since $\xi$ has trivial residues modulo $\ell^r$, there is a class $\gamma\in H^{2i-1}(X,\mu_{\ell^r}^{\otimes i})$ with
$$
\gamma=\iota_\ast \bar \xi \in F^i H^{2i-1}(F_{i-1}X,\mu_{\ell^r}^{\otimes i}).
$$
Note that $\gamma$ is uniquely determined by $\bar \xi$, because the map
$$
H^{2i-1}( X,\mu_{\ell^r}^{\otimes i})\longrightarrow H^{2i-1}(F_{i-1}X,\mu_{\ell^r}^{\otimes i})
$$
is injective by Lemma \ref{lem:N^i-1}. 
Using this we will in what follows implicitly identify $ \iota_\ast \bar \xi $ and $\gamma$ with each other.

The image of $\gamma= \iota_\ast \bar \xi $ in $H^{2i-1}(X,\Q_\ell/\Z_\ell(i))$ is well-defined (i.e.\ depends only on the class $[z]$) up to elements of the form $\iota_\ast \bar \zeta$ for 
\begin{align}\label{eq:zeta}
\zeta\in \ker\left(\del\circ \iota_\ast:\bigoplus_{x\in X^{(i-1)}}H^1( x,\Z_{\ell}(1)) \longrightarrow  \bigoplus_{x\in X^{(i)}}[x]\Z_\ell \right).
\end{align}
By Lemma \ref{lem:N^i-1}, the subgroup of $H^{2i-1}(X,\Q_\ell/\Z_\ell(i))$ generated by classes of the form $\iota_\ast \bar \zeta$ with $\zeta$ as in (\ref{eq:zeta}) agrees with the image of $N^{i-1}H^{2i-1}(X,\Q_\ell(i))$ and so we find that
the class
$$
\tilde \lambda_{tr}^i([z]):=-[ \iota_\ast \bar \xi ]\in H^{2i-1}(X,\Q_\ell/\Z_\ell(i))/N^{i-1}H^{2i-1}(X,\Q_\ell(i))
$$
is well-defined, giving rise to a map
\begin{align} \label{def:tilde-lambda}
\tilde \lambda_{tr}^i:A^i(X)[\ell^\infty]\longrightarrow H^{2i-1}(X,\Q_\ell/\Z_\ell(i))/N^{i-1}H^{2i-1}(X,\Q_\ell(i)).
\end{align}
(The minus sign is necessary to make our map compatible with Bloch's map; cf.\ \cite[p.\ 112]{bloch-compositio} and \cite[Proposition 8.3]{Sch-refined}.)

\subsection{Basic properties of \texorpdfstring{$\tilde \lambda_{tr}^i$}{tilde lambda}}

\begin{lemma} \label{lem:tilde-lambda-vslambda} 
The restriction of $ \tilde \lambda_{tr}^i$ to the subgroup of classes $[z]\in A^i(X)[\ell^\infty]$ with $\cl_X^i(z)=0$ coincides with the transcendental Abel--Jacobi map $\lambda_{tr}^i$ from \cite[\S 7.5]{Sch-refined}. 
\end{lemma}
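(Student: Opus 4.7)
The plan is to compare the two constructions directly on a class $[z]\in A^i(X)[\ell^r]$ with $\cl^i_X(z)=0$ and show that they produce the same element of
$$
H^{2i-1}(X,\Q_\ell/\Z_\ell(i))/N^{i-1}H^{2i-1}(X,\Q_\ell(i)).
$$
Both $\tilde\lambda_{tr}^i$ and $\lambda_{tr}^i$ take as input the same data --- a representative $\xi \in \bigoplus_{x\in X^{(i-1)}} H^1(x,\Z_\ell(1))$ for $[z]$ under the isomorphism $\psi_r$ of (\ref{def:psi}), satisfying $\partial(\iota_\ast\xi) = \ell^r z$ --- and both produce their output by pushing forward the reduction $\bar\xi$ mod $\ell^r$ and lifting to $H^{2i-1}(X,\mu_{\ell^r}^{\otimes i})$, with an overall sign of $-1$. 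So the proof should amount to matching representatives at the level of the cohomology with support / coniveau filtration.

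Concretely, I would first unwind the construction of $\tilde\lambda_{tr}^i$ from Section \ref{subsec:constr-lambda-tilde} to produce the unique class $\gamma \in H^{2i-1}(X,\mu_{\ell^r}^{\otimes i})$ restricting to $\iota_\ast \bar\xi \in F^i H^{2i-1}(F_{i-1}X,\mu_{\ell^r}^{\otimes i})$; this lift exists by construction and is independent of the hypothesis $\cl^i_X(z)=0$, thanks to Lemma \ref{lem:N^i-1}. Next I would recall from \cite[\S 7.5]{Sch-refined} that the construction of $\lambda_{tr}^i$ on homologically trivial torsion uses exactly the same starting data $(\xi,\bar\xi)$ and produces a lift of $\iota_\ast \bar\xi$ to $H^{2i-1}(X,\mu_{\ell^r}^{\otimes i})$ by invoking exactness in (\ref{eq:les}) combined with $\cl^i_X(z)=0$. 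Since the restriction
$$
H^{2i-1}(X,\mu_{\ell^r}^{\otimes i}) \hookrightarrow H^{2i-1}(F_{i-1}X,\mu_{\ell^r}^{\otimes i})
$$
is injective by Lemma \ref{lem:N^i-1}, any two such lifts necessarily coincide, giving $\tilde\lambda^i_{tr}([z]) = \lambda^i_{tr}([z])$ once the sign conventions --- inherited in both cases from \cite[p.\ 112]{bloch-compositio} and recorded in \cite[Proposition 8.3]{Sch-refined} --- are matched.

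The main and essentially only obstacle is notational: reconciling the indexing and Gysin pushforward conventions used in Section \ref{subsec:constr-lambda-tilde} with those employed in \cite[\S 7.5]{Sch-refined}. No new geometric or cohomological input beyond the uniqueness of lifts provided by Lemma \ref{lem:N^i-1} and the compatibility of the Gysin long exact sequence with proper pushforwards (see \cite[(P2)]{Sch-refined}) should be needed; modulo this bookkeeping the equality of the two maps on homologically trivial torsion is formal.
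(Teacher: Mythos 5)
Your plan has the right overall shape --- use the same $\xi$ for both constructions and match representatives --- but it mischaracterizes the construction of $\lambda_{tr}^i$ in \cite[\S 7.5]{Sch-refined} and in doing so skips the one substantive step. The transcendental Abel--Jacobi map is not defined by producing a lift of $\iota_\ast\bar\xi$ to $H^{2i-1}(X,\mu_{\ell^r}^{\otimes i})$. Rather, using $\cl_X^i(z)=0$ one first chooses $\alpha\in H^{2i-1}(F_{i-1}X,\Z_\ell(i))$ with $\del\alpha=z$, then a class $\beta\in H^{2i-1}(X,\Z_\ell(i))$ with $\beta=\ell^r\alpha-\iota_\ast\xi$ in $F^i H^{2i-1}(F_{i-1}X,\Z_\ell(i))$, and the output is $\lambda_{tr}^i([z])=[\beta/\ell^r]$, where $\beta/\ell^r$ is formed in $\Q_\ell$-coefficients and then projected to $\Q_\ell/\Z_\ell$. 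The output therefore passes through $\Q_\ell$, not through $\mu_{\ell^r}^{\otimes i}$, so ``uniqueness of lifts'' from Lemma~\ref{lem:N^i-1} does not by itself furnish the comparison.

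What is actually needed, once one observes that $\del(\iota_\ast\xi)=\ell^r z$ (so the same $\xi$ can be used to compute $\psi_r([z])$, giving $\tilde\lambda_{tr}^i([z])=-[\iota_\ast\bar\xi]$) and that the defining relation forces $\bar\beta=-\iota_\ast\bar\xi$ mod $\ell^r$, is the compatibility of the two routes from $\Z_\ell$- to $\Q_\ell/\Z_\ell$-coefficients: multiplication by $1/\ell^r$ into $\Q_\ell$-coefficients followed by the projection to $\Q_\ell/\Z_\ell$ agrees with reduction mod $\ell^r$ followed by the colimit map into $\Q_\ell/\Z_\ell$-coefficients. This elementary commutative-diagram observation is precisely what the paper's proof reduces to; it is not ``notational bookkeeping'' subsumed by Lemma~\ref{lem:N^i-1} or the Gysin compatibility \cite[(P2)]{Sch-refined}, and your proposal does not mention it.
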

\begin{proof}
Assume that $\cl_X^i(z)=0$. Following the construction in \cite[\S 7.5]{Sch-refined}, there is a 
 class $\alpha\in H^{2i-1}(F_{i-1}X,\Z_\ell(i))$ with $\del \alpha=z$ and a class $\beta\in H^{2i-1}(X,\Z_\ell(i))$ with
\begin{align} \label{eq:beta=l^r-alpha}
\beta=\ell^r\alpha-\iota_\ast \xi\in F^i H^{2i-1}(F_{i-1}X,\Z_\ell(i))  
\end{align}
 for some $\xi\in \bigoplus_{x\in X^{(i-1)}}H^1(x,\Z_\ell(1))$. 
 We then think about $\beta/\ell^r$ as class in $H^{2i-1}(X,\Q_\ell(i))$ and project this further to a class $[\beta/\ell^r]\in H^{2i-1}(X,\Q_\ell/\Z_\ell(i))$.
 By definition in \textit{loc.\ cit.}, this class represents $\lambda_{tr}^i([z])$:
 $$
\lambda_{tr}^i([z])=[\beta/\ell^r] \in \frac{H^{2i-1}(X,\Q_\ell/\Z_\ell(i))}{N^{i-1}H^{2i-1}(X,\Q_\ell(i) )} .
 $$
 We aim to see that this coincides with $\tilde \lambda_{tr}^i([z])$ defined above.
 To this end, note that (\ref{eq:beta=l^r-alpha}) implies $\del \iota_\ast \xi=\ell^r\del \alpha=\ell^r\cdot z$ and so
 $$
 \tilde \lambda_{tr}^i([z])=-[\iota_\ast \bar \xi],
 $$
 where $\bar \xi$ is the reduction modulo $\ell^r$ of $\xi\in \bigoplus_{x\in X^{(i-1)}}H^1(x,\Z_\ell(1))$.
By (\ref{eq:beta=l^r-alpha}), we have $\bar \beta=-\iota_\ast \bar\xi$ and so the claim in the lemma reduces to the simple observation that the following diagram is commutative:
$$
\xymatrix{
H^{2i-1}(F_{i-1}X,\Z_\ell(i))\ar[r]^{\cdot (1/\ell^r)} \ar[d]^{\mod \ell^r}& H^{2i-1}(F_{i-1}X,\Q_\ell(i))\ar[r] & H^{2i-1}(F_{i-1}X,\Q_\ell/\Z_\ell(i))\ar[d]^{=}\\
H^{2i-1}(F_{i-1}X,\mu_{\ell^r}^{\otimes i})\ar[rr]&& \colim_s H^{2i-1}(F_{i-1}X,\mu_{\ell^s}^{\otimes i})
} .
$$
This concludes the proof of the lemma.
\end{proof}

It is shown in \cite{Sch-refined} that $\lambda_{tr}^i$ is injective for $i\leq 2$; cf.\ \cite[Theorem 9.4 and Corollary 9.5]{Sch-refined} or \cite[Theorem 1.8(2)]{Sch-refined}.
The following result extends this to $\tilde \lambda_{tr}^i$ as follows.

\begin{proposition}\label{prop:tilde-lambda}
Let $X$ be an algebraic scheme over a finitely generated field $k$ and let $\ell$ be a prime invertible in $k$.
Then 
$$
\tilde \lambda_{tr}^i:A^i(X)[\ell^\infty]\longrightarrow H^{2i-1}(X,\Q_\ell/\Z_\ell(i))/N^{i-1}H^{2i-1}(X,\Q_\ell(i))
$$
is injective for $i\leq 2$.
\end{proposition}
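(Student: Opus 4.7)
The strategy is to reduce to the injectivity of the transcendental Abel--Jacobi map $\lambda_{tr}^i$ from \cite[\S 7.5]{Sch-refined}, which for $i\leq 2$ is already known by \cite[Theorem 9.4 and Corollary 9.5]{Sch-refined}. By Lemma \ref{lem:tilde-lambda-vslambda}, $\tilde\lambda_{tr}^i$ restricts to $\lambda_{tr}^i$ on the subgroup of classes with vanishing cycle class map. Hence the full injectivity of $\tilde\lambda_{tr}^i$ is reduced to proving the implication
$$
\tilde\lambda_{tr}^i([z])=0 \ \Longrightarrow\ \cl^i_X(z)=0\ \text{in}\ H^{2i}(X,\Z_\ell(i)).
$$
Indeed, granted this, any $[z]\in A^i(X)[\ell^\infty]$ with $\tilde\lambda_{tr}^i([z])=0$ lies in $\ker(\cl^i_X)$, and Lemma \ref{lem:tilde-lambda-vslambda} identifies $\lambda_{tr}^i([z])$ with $\tilde\lambda_{tr}^i([z])=0$, so the cited injectivity of $\lambda_{tr}^i$ yields $[z]=0$.

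To prove the implication I would invoke the Bockstein
$$
\delta\colon H^{2i-1}(X,\Q_\ell/\Z_\ell(i))\longrightarrow H^{2i}(X,\Z_\ell(i))
$$
attached to $0\to \Z_\ell(i)\to \Q_\ell(i)\to \Q_\ell/\Z_\ell(i)\to 0$. Its kernel is the image of $H^{2i-1}(X,\Q_\ell(i))$, which contains the image of $N^{i-1}H^{2i-1}(X,\Q_\ell(i))$, so $\delta$ descends to a well-defined map $\bar\delta$ on the target of $\tilde\lambda_{tr}^i$. The crucial claim is then the formula
$$
\bar\delta\circ \tilde\lambda_{tr}^i = \pm\,\cl^i_X\colon A^i(X)[\ell^\infty]\longrightarrow H^{2i}(X,\Z_\ell(i)),
$$
from which the required implication is immediate. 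To verify it, one unwinds the construction in \S\ref{subsec:constr-lambda-tilde}: given $[z]\in A^i(X)[\ell^r]$ represented by $\xi\in \bigoplus_{x\in X^{(i-1)}}H^1(x,\Z_\ell(1))$ with $\del(\iota_\ast\xi)=\ell^r z$ and the unique lift $\gamma\in H^{2i-1}(X,\mu_{\ell^r}^{\otimes i})$ of $\iota_\ast\bar\xi$, the morphism of short exact sequences of coefficient sheaves
$$
\bigl(0\to \Z_\ell(i)\stackrel{\ell^r}{\to}\Z_\ell(i)\to \mu_{\ell^r}^{\otimes i}\to 0\bigr)\longrightarrow \bigl(0\to \Z_\ell(i)\to \Q_\ell(i)\to \Q_\ell/\Z_\ell(i)\to 0\bigr)
$$
(middle map division by $\ell^r$) intertwines $\delta$ with the Bockstein $\delta'$ of the upper row, so computing $\delta([\gamma])$ reduces to computing $\delta'(\gamma)$. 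Now $\iota_\ast\xi$ is a $\Z_\ell$-coefficient lift of $\iota_\ast\bar\xi=\gamma|_{F_{i-1}X}$ whose Gysin coboundary is exactly $\ell^r z\in \bigoplus_{x\in X^{(i)}}\Z_\ell[x]$. Running the cochain-level Bockstein recipe (lift, apply $d$, divide by $\ell^r$) and using the commutativity of the Bockstein squares under the restriction $F_{i-1}X\hookrightarrow X$ together with the Gysin sequence \eqref{eq:les}, one reads off $\delta'(\gamma)=\pm\iota_\ast z=\pm \cl^i_X(z)$.

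The main obstacle is exactly this Bockstein identity $\bar\delta\circ \tilde\lambda_{tr}^i=\pm\cl^i_X$: once it is in place, Lemma \ref{lem:tilde-lambda-vslambda} and the known injectivity of $\lambda_{tr}^i$ for $i\leq 2$ are cited verbatim. The delicate point in the diagram chase is compatibly connecting the two coefficient Bocksteins above to the Gysin long exact sequence on $F_{i-1}X\hookrightarrow X$ while tracking the sign that is arranged by the minus sign built into the definition \eqref{def:tilde-lambda} of $\tilde\lambda_{tr}^i$.
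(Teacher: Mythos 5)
Your high-level reduction is exactly the paper's: show that $\tilde\lambda_{tr}^i([z])=0$ forces $\cl_X^i(z)=0$, then invoke Lemma~\ref{lem:tilde-lambda-vslambda} to identify $\tilde\lambda_{tr}^i([z])$ with $\lambda_{tr}^i([z])$ and conclude via the known injectivity of $\lambda_{tr}^i$ for $i\leq 2$. What differs, and where a gap opens, is how you establish the implication $\tilde\lambda_{tr}^i([z])=0\Rightarrow\cl_X^i(z)=0$.

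You propose to do so by proving the full Bockstein compatibility $\bar\delta\circ\tilde\lambda_{tr}^i=\pm\cl_X^i$. This is precisely Lemma~\ref{lem:tilde-lambda-vscl} of the paper, but note two things. First, that lemma carries an additional hypothesis — that $X$ admits a closed embedding into a smooth equi-dimensional algebraic $k$-scheme — because its proof identifies Borel--Moore cohomology with cohomology with support (Remark~\ref{rem:BM-coho}) and then runs a Cartan--Eilenberg argument on the ambient smooth scheme. The proposition you are proving is stated for arbitrary algebraic schemes over a finitely generated field; your route would therefore only cover the quasi-projective case unless you first reduce to it (and no such reduction is offered). Second, the step where you ``read off $\delta'(\gamma)=\pm\iota_\ast z$'' from the Gysin sequence and a cochain-level recipe is exactly the hard part of Lemma~\ref{lem:tilde-lambda-vscl}: the coboundary in the Gysin/localization sequence and the Bockstein coboundary live on different short exact sequences, and relating them is what the page-long Cartan--Eilenberg diagram chase in that lemma's proof is for. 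Treating it as a routine commutativity check is a genuine gap.

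The paper's proof of the proposition sidesteps the Bockstein identity entirely and thereby avoids both issues. From $\tilde\lambda_{tr}^i([z_0])=0$ one gets $\iota_\ast\bar\xi=\iota_\ast\bar\zeta$ in $H^{2i-1}(X,\mu_{\ell^r}^{\otimes i})$ for some $\zeta$ with $\del\iota_\ast\zeta=0$, by the explicit description~\eqref{eq:N} of $N^{i-1}$. The $\ell^r$-Bockstein sequence for $0\to\Z_\ell(i)\to\Z_\ell(i)\to\mu_{\ell^r}^{\otimes i}\to 0$ then gives $\iota_\ast\xi-\iota_\ast\zeta=\ell^r\alpha$ for some $\alpha\in H^{2i-1}(F_{i-1}X,\Z_\ell(i))$; applying $\del$ and using torsion-freeness of $\bigoplus_{x\in X^{(i)}}\Z_\ell[x]$ yields $z_0=\del\alpha$, and then $\cl_X^i(z_0)=\iota_\ast z_0=\iota_\ast\del\alpha=0$ by exactness of~\eqref{eq:les}. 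This is elementary, works for all algebraic schemes, and requires no embedding and no Cartan--Eilenberg machinery. If you want to keep your route you would need to (a) prove the Bockstein identity in the generality actually required, or (b) first reduce to the embeddable case; as written, the proposal establishes the proposition only under an unstated hypothesis and with an unverified key identity.
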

\begin{proof} 
Let $[z_{0}]\in\ker(\tilde \lambda_{tr}^i)$.
We first show that $[z_0]$ lies in the kernel of the cycle class map. 
By (\ref{def:psi}), we find that there is a class $\xi\in\bigoplus_{x\in X^{(i-1)}}H^{1}(x,\Z_{\ell}(1))$ such that $\ell^{r}z_{0}=\del(\iota_{\ast}\xi)$ and such that $\tilde \lambda_{tr}^i([z_{0}])=-[\iota_{\ast}\overline{\xi}]=0$. 
Especially, we may assume $\iota_{\ast}\overline{\xi}\in\im (N^{i-1}H^{2i-1}(X,\Z_{\ell}(i))\longrightarrow H^{2i-1}(X,\mu^{\otimes i}_{\ell^{r}}))$ and so by \eqref{eq:N} we obtain $\iota_{\ast}\overline{\xi}=\iota_{\ast}\overline{\zeta}$ for some 
$$
\zeta\in\ker \left( \del\circ\iota_{\ast}\colon\bigoplus_{x\in X^{(i-1)}} H^{1}(x,\Z_{\ell}(1))\longrightarrow \bigoplus_{x\in X^{(i)}}[x]\Z_{\ell} \right).
$$
Exactness of the Bockstein sequence then gives $\iota_{\ast}\xi=\iota_{\ast}\zeta+\ell^{r}\alpha$ for some $\alpha\in H^{2i-1}(F_{i-1}X,\Z_{\ell}(i))$.
Since $\del \iota_\ast \zeta=0$, we get that $z_{0}=\del\alpha$ and so $\cl^{i}_{X}([z_{0}])=\iota_\ast[z_0]=0$ by exactness of \eqref{eq:les}.
Hence, $z_0$ has trivial cycle class on $X$, as claimed.
By Lemma \ref{lem:tilde-lambda-vslambda}, it then follows that $z_0$ lies in the kernel of the transcendental Abel--Jacobi map from \cite{Sch-refined}: $\lambda_{tr}^i([z_0])=0$.
In other words, we have proved that $\ker(\tilde \lambda_{tr}^i)=\ker( \lambda_{tr}^i)$ (but note that the two maps are defined on different domains: $\lambda_{tr}^i$ is defined on homologically trivial $\ell^\infty$-torsion cycles, while $\tilde \lambda_{tr}^i$ is defined on arbitrary $\ell^\infty$-torsion cycles).
Since $\ker(\tilde \lambda_{tr}^i)=\ker( \lambda_{tr}^i)$, \cite[Theorem 1.8(2)]{Sch-refined} yields
$$
\ker(\tilde \lambda_{tr}^i)=  H^{2i-2}_{i-3,nr} (X,\Q_\ell /\Z_\ell (i)) /G^{i}H^{2i-2}_{i-3,nr} (X,\Q_\ell/\Z_\ell(i) ) ,
$$
where $H^i_{j,nr}(X,A(n))=\im(H^i(F_{j+1}X,A(n))\to H^i(F_{j}X,A(n)))$ denotes the $j$-th refined unramified cohomology; cf.\ \cite{Sch-refined}.
Since $F_jX=\emptyset$ for $j<0$, we get that $H^i_{j,nr}(X,A(n))=0$ for $j<0$ and so
$ 
\ker(\tilde \lambda_{tr}^i)=0
$ 
for $i<3$.
This proves the proposition.
\end{proof}

The following result is motivated by the description of the image of $\lambda_{tr}^i$ in \cite[Proposition 7.16]{Sch-refined}.

\begin{proposition}\label{prop:image-tilde-lambda} 
Let $X$ be an algebraic $k$-scheme and let $\ell$ be a prime invertible in $k$. Then the map $\tilde{\lambda}^{i}_{tr}$ from \eqref{def:tilde-lambda} satisfies: $$\im(\tilde{\lambda}^{i}_{tr})=\frac{N^{i-1}H^{2i-1}(X,\Q_{\ell}/\Z_{\ell}(i))}{N^{i-1}H^{2i-1}(X,\Q_{\ell}(i))}.$$
\end{proposition}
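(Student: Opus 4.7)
The plan is to establish both inclusions. The forward containment
$$
\im(\tilde\lambda^i_{tr})\subseteq N^{i-1}H^{2i-1}(X,\Q_\ell/\Z_\ell(i))/N^{i-1}H^{2i-1}(X,\Q_\ell(i))
$$
follows by unraveling the construction of $\tilde\lambda^i_{tr}$: for $[z]\in A^i(X)[\ell^r]$ represented by $\xi\in\bigoplus_{x\in X^{(i-1)}}H^1(x,\Z_\ell(1))$ with $\del\iota_\ast\xi=\ell^r z$, reducing modulo $\ell^r$ yields $\del\iota_\ast\bar\xi=0$ in $\bigoplus_{x\in X^{(i)}}[x]\Z/\ell^r$. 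By Lemma~\ref{lem:N^i-1} applied with $A=\Z/\ell^r$, the unique lift $\gamma\in H^{2i-1}(X,\mu_{\ell^r}^{\otimes i})$ of $\iota_\ast\bar\xi$ lies in $N^{i-1}H^{2i-1}(X,\mu_{\ell^r}^{\otimes i})$, and functoriality of the coniveau filtration under the coefficient change $\mu_{\ell^r}^{\otimes i}\hookrightarrow\Q_\ell/\Z_\ell(i)$ then places the image of $\gamma$ in $N^{i-1}H^{2i-1}(X,\Q_\ell/\Z_\ell(i))$.

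For the reverse inclusion, I would take $\gamma\in N^{i-1}H^{2i-1}(X,\Q_\ell/\Z_\ell(i))$ and construct an explicit preimage. Applying Lemma~\ref{lem:N^i-1} with $A=\Q_\ell/\Z_\ell$ yields
$$
\gamma=\iota_\ast\bar\eta,\qquad \bar\eta\in\ker\left(\del\circ\iota_\ast\colon\bigoplus_{x\in X^{(i-1)}}H^1(x,\Q_\ell/\Z_\ell(1))\longrightarrow\bigoplus_{x\in X^{(i)}}[x]\Q_\ell/\Z_\ell\right).
$$
Since $\bar\eta$ has finite support and each nonzero component is torsion, the identification $H^1(x,\Q_\ell/\Z_\ell(1))=\colim_r H^1(x,\mu_{\ell^r})$ allows me, for $r$ sufficiently large, to represent $\bar\eta$ by an element (still denoted $\bar\eta$) of $\bigoplus_{x\in X^{(i-1)}}H^1(x,\mu_{\ell^r})$ satisfying $\del\iota_\ast\bar\eta=0$ in $\bigoplus_{x\in X^{(i)}}[x]\Z/\ell^r$. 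The isomorphism $\psi_r$ from \eqref{def:psi} then produces $[z]\in A^i(X)[\ell^r]$ with $\psi_r([z])=[\bar\eta]$; unwinding the explicit description of $\psi_r$ given below \eqref{def:psi}, any $\xi\in\bigoplus_{x}H^1(x,\Z_\ell(1))$ with $\del\iota_\ast\xi=\ell^r z$ realizing $\psi_r([z])=[\bar\xi]$ satisfies $\bar\xi\equiv\bar\eta$ modulo the denominator in \eqref{def:psi}. Consequently
$$
\tilde\lambda^i_{tr}([z])=-[\iota_\ast\bar\xi]=-[\iota_\ast\bar\eta]=-[\gamma]
$$
in the target quotient, using that $\iota_\ast\bar\xi-\iota_\ast\bar\eta$ has the form $\iota_\ast\bar\zeta$ with $\zeta$ as in \eqref{eq:zeta} and hence lies in the image of $N^{i-1}H^{2i-1}(X,\Q_\ell(i))$ as recalled in Section~\ref{subsec:constr-lambda-tilde}. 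Therefore $[\gamma]=\tilde\lambda^i_{tr}([-z])\in\im(\tilde\lambda^i_{tr})$.

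I do not anticipate any serious obstacle: the substantive input is the bijectivity of $\psi_r$ from \cite[Lemma 8.1]{Sch-refined}, and the rest is a direct unraveling of the constructions from Section~\ref{subsec:constr-lambda-tilde}. The only mild technical point is the passage from $\Q_\ell/\Z_\ell$-coefficients to $\mu_{\ell^r}$-coefficients when producing a lift of $\bar\eta$, which is automatic from the finiteness of its support.
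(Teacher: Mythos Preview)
Your proposal is correct and follows essentially the same route as the paper's proof: both establish the two inclusions by unraveling the construction via Lemma~\ref{lem:N^i-1} and the isomorphism~\eqref{def:psi}. The only cosmetic difference is that the paper first reduces to finite level (via exactness of direct limits) and then invokes Hilbert~90 explicitly to lift $\bar\eta$ to an integral class $\xi$, whereas you obtain the same lift by citing the surjectivity of $\psi_r$ as a black box.
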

\begin{proof} 
Recall from Section \ref{subsec:constr-lambda-tilde} that $\tilde\lambda^{i}_{tr}$ was constructed as the direct limit of maps $$\tilde\lambda^{i}_{r}\colon A^{i}(X)[\ell^{r}]\longrightarrow\frac{H^{2i-1}(X,\mu^{\otimes i}_{\ell^{r}})}{N^{i-1}H^{2i-1}(X,\Z_{\ell}(i))}$$ over all positive integers $r\geq 1$. 
As taking direct limits is an exact functor, it suffices to show that 
\begin{align}\label{eq:image-tilde-lambda} 
\im(\tilde{\lambda}^{i}_{r})=\frac{N^{i-1}H^{2i-1}(X,\mu^{\otimes i}_{\ell^{r}})}{N^{i-1}H^{2i-1}(X,\Z_{\ell}(i))}\ \ \ \text{for all}\ r\geq 1.
\end{align} 
Let $[z_{0}]\in A^{i}(X)[\ell^{r}]$. 
Then by \eqref{def:psi} and the construction in Section \ref{subsec:constr-lambda-tilde}, there is a class 
$$
\xi\in\bigoplus_{x\in X^{(i-1)}} H^{1}(x,\Z_{\ell}(1))
$$ 
such that $\del(\iota_{\ast}\xi)=\ell^{r}z_{0}$ and such that $\tilde\lambda^{i}_{r}([z_{0}])=-[\iota_{\ast}\overline{\xi}]$. 
Since the following sequence $$\bigoplus_{x\in X^{(i-1)}} H^{1}(x,\mu^{\otimes 1}_{\ell^{r}})\overset{\iota_{\ast}}{\longrightarrow} H^{2i-1}(F_{i-1}X,\mu^{\otimes i}_{\ell^{r}})\longrightarrow H^{2i-1}(F_{i-2}X,\mu^{\otimes i}_{\ell^{r}})$$ is exact by \eqref{eq:les}, we find $\iota_{\ast}\overline{\xi}\in N^{i-1}H^{2i-1}(X,\mu^{\otimes i}_{\ell^{r}})$. This proves the inclusion “$\subseteq$” for \eqref{eq:image-tilde-lambda}.

On the other hand, if $\beta\in N^{i-1}H^{2i-1}(X,\mu^{\otimes i}_{\ell^{r}})$ then, by \eqref{eq:N}, we can pick 
$$
\overline{\xi}\in\ker \left(\del\circ\iota_{\ast}\colon\bigoplus_{x\in X^{(i-1)}}H^{1}(x,\mu^{\otimes 1}_{\ell^{r}})\longrightarrow \bigoplus_{x\in X^{(i)}}[x]\Z/\ell^{r} \right)
$$ 
such that $\iota_{\ast}\overline{\xi}=\beta$. 
Hilbert 90 implies that $H^1(x,\Z_\ell(1))\to H^{1}(x,\mu^{\otimes i}_{\ell^{r}})$ is surjective for all $x\in X$; see
 \cite[(P6) in Definition 4.4 and Proposition 6.6]{Sch-refined}.
 Hence, there is a class $\xi\in\bigoplus_{x\in X^{(i-1)}} H^{1}(x,\Z_{\ell}(1))$ whose reduction modulo $\ell^{r}$ is $\overline{\xi}\in\bigoplus_{x\in X^{(i-1)}}H^{1}(x,\mu^{\otimes i}_{\ell^{r}})$. 
In particular, $0=\del\beta=\del(\iota_{\ast}\overline{\xi})$ yields $\del(\iota_{\ast}\xi)=\ell^{r}z_{0}$ for some $z_{0}\in\bigoplus_{x\in X^{(i)}}[x]\Z_{\ell}$ and thus $\tilde\lambda^{i}_{r}(-[z_{0}])=[\iota_{\ast}\overline{\xi}]=[\beta]$. 
This finishes the proof of the proposition.
\end{proof}

By Lemma \ref{lem:tilde-lambda-vslambda}, $\tilde \lambda^i_{tr}$ and $\lambda_{tr}^i$ agree on $\ell$-power torsion classes with trivial cycle class.
Moreover, in the proof of Proposition \ref{prop:tilde-lambda} we showed that $\tilde \lambda^i_{tr}$ and $\lambda_{tr}^i$ have the same kernel.
The following lemma shows more generally that $\tilde \lambda^i_{tr}$ factors through the cycle class map.

\begin{lemma} \label{lem:tilde-lambda-vscl}
Let $X$ be an algebraic $k$-scheme which admits a closed embedding into a smooth equi-dimensional algebraic $k$-scheme (e.g.\ $X$ is quasi-projective).
Then the composition of $ \tilde \lambda_{tr}^i$ from (\ref{def:tilde-lambda}) with the negative of the Bockstein map 
$$
\delta:H^{2i-1}(X,\Q_\ell/\Z_\ell(i))\longrightarrow H^{2i}(X,\Z_\ell(i))
$$
agrees with the cycle class map $\cl_X^i:A^i(X)[\ell^\infty]\longrightarrow H^{2i}(X,\Z_\ell(i))$.
\end{lemma}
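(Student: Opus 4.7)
The plan is to trace through the construction of $\tilde\lambda^i_{tr}$ and verify the claim by combining the Bockstein long exact sequence in coefficients with the Gysin localization sequence \eqref{eq:les}.

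Given $[z]\in A^i(X)[\ell^r]$, choose $\xi\in\bigoplus_{x\in X^{(i-1)}}H^{1}(x,\Z_\ell(1))$ with $\partial(\iota_\ast\xi)=\ell^r z$ as in Section~\ref{subsec:constr-lambda-tilde}, so that $\tilde\lambda^i_{tr}([z])=-[\iota_\ast\overline\xi]$; by Lemma~\ref{lem:N^i-1} the class $\iota_\ast\overline\xi$ lifts uniquely to $H^{2i-1}(X,\mu_{\ell^r}^{\otimes i})\subset H^{2i-1}(X,\Q_\ell/\Z_\ell(i))$, on which we may evaluate $\delta$. Well-definedness of the composition on the quotient by $N^{i-1}H^{2i-1}(X,\Q_\ell(i))$ is automatic: by the coefficient sequence $0\to\Z_\ell(i)\to\Q_\ell(i)\to\Q_\ell/\Z_\ell(i)\to 0$, the Bockstein vanishes on the image of $H^{2i-1}(X,\Q_\ell(i))$, so it kills $N^{i-1}H^{2i-1}(X,\Q_\ell(i))$ in particular.

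The content of the lemma is the equality $\delta(\iota_\ast\overline\xi)=\cl^i_X(z)$ in $H^{2i}(X,\Z_\ell(i))$. By naturality of the Bockstein for the restriction $j^\ast\colon H^{\ast}(X,-)\to H^{\ast}(F_{i-1}X,-)$ together with the fact that $\iota_\ast\xi\in H^{2i-1}(F_{i-1}X,\Z_\ell(i))$ is an honest $\Z_\ell$-lift of $j^\ast(\iota_\ast\overline\xi)$, one gets $j^\ast\delta(\iota_\ast\overline\xi)=0$. The Gysin exact sequence \eqref{eq:les} then yields $w\in\bigoplus_{x\in X^{(i)}}[x]\Z_\ell$ with $\delta(\iota_\ast\overline\xi)=\iota_\ast(w)$. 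The identity $\iota_\ast(w)=\iota_\ast(z)$ is the standard ``divide by $\ell^r$'' characterization of the Bockstein: the obstruction to lifting $\iota_\ast\xi$ from $F_{i-1}X$ to $X$ with $\Z_\ell$-coefficients is the residue $\partial(\iota_\ast\xi)=\ell^r z$, which divided by $\ell^r$ and pushed forward under $\iota_\ast$ yields precisely $\cl^i_X(z)$. Combining with the minus sign in $\tilde\lambda^i_{tr}$, one obtains $-\delta(\tilde\lambda^i_{tr}([z]))=\delta(\iota_\ast\overline\xi)=\cl^i_X(z)$, as required.

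The main obstacle is making this last ``divide by $\ell^r$'' step rigorous in the Borel--Moore setting, and this is precisely what the closed-embedding hypothesis buys us: by Remark~\ref{rem:BM-coho} the relevant Borel--Moore cohomology groups of $X$ become ordinary pro-\'etale cohomology groups with support in an ambient smooth equi-dimensional $k$-scheme, so that the Bockstein, the restriction to $F_{i-1}X$, and the Gysin pushforward $\iota_\ast$ all admit their usual snake-lemma descriptions. The compatibility of the Bockstein with \eqref{eq:les} is then reduced to functoriality of the Gysin long exact sequence in the coefficient sheaf, applied to $0\to\Z_\ell(i)\xrightarrow{\ell^r}\Z_\ell(i)\to\mu_{\ell^r}^{\otimes i}\to 0$.
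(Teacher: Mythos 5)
Your outline correctly identifies the strategy (Bockstein versus Gysin compatibility) and the role of the closed-embedding hypothesis via Remark \ref{rem:BM-coho}, but the crucial step is asserted rather than proved. After establishing $\delta(\iota_\ast\overline\xi)=\iota_\ast w$ for some $w\in\bigoplus_{x\in X^{(i)}}[x]\Z_\ell$ via exactness of \eqref{eq:les}, the equality $\iota_\ast w=\iota_\ast z$ is the entire content of the lemma, and you dispose of it with the phrase ``the standard `divide by $\ell^r$' characterization of the Bockstein.'' That characterization is a chain-level statement: the Bockstein of $\iota_\ast\overline\xi$ is computed by lifting a representing cocycle to $\Z_\ell$-coefficient cochains and dividing its coboundary by $\ell^r$, while $z$ arises as the residue $\del(\iota_\ast\xi)$ divided by $\ell^r$ via the Gysin coboundary. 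Identifying these two procedures is precisely an anticommutativity statement between the Bockstein connecting map (for the coefficient sequence $0\to\Z_\ell(i)\to\Z_\ell(i)\to\mu_{\ell^r}^{\otimes i}\to 0$) and the Gysin connecting map (for the closed pair $(W',W)$), and it does not follow formally from ``functoriality of the Gysin sequence in the coefficient sheaf'' — one needs to chase a genuine double complex.

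This is exactly what the paper does: it reduces to the codimension-one statement $-\delta(\overline\xi)=\cl_W^1(z_0)$ on a closed subset $W\subset X$ of pure codimension $i-1$, rewrites everything in terms of cohomology with support in a smooth ambient space via Remark \ref{rem:BM-coho}, takes a Cartan--Eilenberg injective resolution of $0\to\hat\Z_\ell(i)\xrightarrow{\ell^r}\hat\Z_\ell(i)\to\mu_{\ell^r}^{\otimes i}\to 0$, forms the resulting $3\times 3$ grid of short exact sequences of complexes for the triple $(W',W,W\setminus W')$, and exhibits an explicit cochain $\omega$ with $p(\omega)=\eta$ (lifting $\overline\xi$) whose coboundary $d\omega$ simultaneously represents both $-\delta(\overline\xi)$ and the image of $z_0=\del\xi$ in $H^{2i}_W(X,\Z_\ell(i))$. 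You should supply a computation of this type (or cite a place where the Bockstein--Gysin anticommutativity is proved in the pro-\'etale/Borel--Moore setting you need); as written, the proof has a genuine gap at its central step.

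One smaller point: your claim that well-definedness on the quotient by $N^{i-1}H^{2i-1}(X,\Q_\ell(i))$ is ``automatic'' because $\delta$ kills the image of $H^{2i-1}(X,\Q_\ell(i))$ is fine, but note that in the paper this is not an issue at all: the proof works directly with a representative $\xi$, so well-definedness is inherited from the construction of $\tilde\lambda_{tr}^i$ in Section \ref{subsec:constr-lambda-tilde}.
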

\begin{proof} 
We follow the proof of \cite[Proposition 1]{CTSS}. 
By topological invariance of the pro-\'etale site, we may up to replacing $k$ by its perfect closure assume that $k$ is perfect (cf.\ \cite[Lemma 5.4.2]{BS}). 

Let $[z_{0}]\in A^{i}(X)[\ell^\infty]$. 
Then there is a closed subset $W\subset X$ of pure codimension $i-1$ and a class $\xi\in H^{1}(F_{0}W,\Z_{\ell}(1))$ such that $\del\xi=\ell^{r}z_{0}$ for some integer $r\geq 0$. 
In particular, $\tilde\lambda^{i}_{tr}([z_{0}])=-[\iota_{\ast}\overline{\xi}],$ where $\overline{\xi}\in H^{1}(W,\mu_{\ell^{r}})$ is the reduction of $\xi$ modulo $\ell^{r}$ and $\iota\colon W\hookrightarrow X$ the obvious closed embedding.
Since the Bockstein map is compatible with proper pushforwards (cf.\ \cite[(P5) in Definition 4.4 and Proposition 6.6]{Sch-refined}), the lemma follows if we can show that
\begin{align} \label{eq:-delta=cl-on-W}
-\delta(\overline{\xi})=\cl_W^1(z_0)\in H^2(W,\Z_\ell(1)).
\end{align}
The above statement does not depend on the ambient space $X$.
Since $X$ can be embedded into a smooth equi-dimensional $k$-scheme by assumption, we may thus from now on assume without loss of generality that $X$ is smooth and equi-dimensional.

In what follows, for a closed subset $Z\subset X$ and $A\in \{\Z/\ell^r,\Z_\ell,\Q_\ell,\Q_\ell/\Z_\ell\}$ the group $H^{i}_{Z}(X,A(n))$ stands for ordinary pro-\'etale cohomology with support; if $c=\dim X-\dim Z$, then
$$
H^{i-2c}_{BM}(Z,A(n))=H^{i}_{Z}(X,A(n))
$$
by Remark \ref{rem:BM-coho}. 
The equation (\ref{eq:-delta=cl-on-W}) then translates into the claim that  
\begin{align} \label{eq:-delta=cl-on-W-2}
\alpha_1:=-\delta(\overline{\xi}) \in H^{2i}_W(X,\Z_\ell(i))\ \ \ \text{and}\ \ \  \alpha_2:=\cl_W^1(z_0) \in H^{2i}_W(X,\Z_\ell(i)) \ \ \ \text{coincide}.
\end{align} 

We aim to describe the class $\alpha_{2}=\cl_W^1(z_0)$. 
Since $\xi\in H^{1}(F_{0}W,\Z_{\ell}(1)),$ we may pick a closed subset $W'\subset W$ of pure codimension one such that $\xi$ admits a lift to $H^{1}(W\setminus W',\Z_{\ell}(1))=H^{2i-1}_{W\setminus W'}(X\setminus W',\Z_{\ell}(i))$. 
Then $\del\xi=\ell^{r}z_{0}\in H^{2i}_{W'}(X,\Z_{\ell}(i))=\bigoplus_{x\in W'^{(0)}}[x]\Z_{\ell}$ and  $\alpha_{2}$ is the image of $z_{0}$ in $H^{2i}_{W}(X,\Z_{\ell}(i))$. 

In order to show $\alpha_1=\alpha_2$, we take a Cartan-Eilenberg injective resolution 
$$0\longrightarrow I^\bullet \longrightarrow J^\bullet \stackrel{p}\longrightarrow K^\bullet \longrightarrow 0$$ 
for the following short exact sequence of sheaves on $X_{\text{pro\'et}}$ 
$$0\longrightarrow \hat{\Z}_{\ell}(i)\overset{\times\ell^{r}}\longrightarrow\hat{\Z}_{\ell}(i)\longrightarrow \mu^{\otimes i}_{\ell^{r}}\longrightarrow 0$$
and we consider the following commutative diagram of complexes of abelian groups.\begin{center}\begin{tikzcd}
            & 1 \arrow[d]                                                       & 1 \arrow[d]                                                      & 1 \arrow[d]                                                                              &   \\
1 \arrow[r] &  H^{0}_{W'}(X,I^\bullet ) \arrow[r] \arrow[d]      &  H^{0}_{W}(X,I^\bullet ) \arrow[r] \arrow[d]      &  H^{0}_{W\setminus W'}(X\setminus W',I^\bullet ) \arrow[r] \arrow[d]      & 1 \\
1 \arrow[r] &  H^{0}_{W'}(X,J^\bullet ) \arrow[r] \arrow[d, "p"] & { H^{0}_{W}(X,J^\bullet )} \arrow[r] \arrow[d, "p"] & { H^{0}_{W\setminus W'}(X\setminus W',J^\bullet )} \arrow[r] \arrow[d, "p"] & 1 \\
1 \arrow[r] & { H^{0}_{W'}(X,K^\bullet )} \arrow[r] \arrow[d]      & {H^{0}_{W}(X,K^\bullet )} \arrow[r] \arrow[d]      & {H^{0}_{W\setminus W'}(X\setminus W',K^\bullet )} \arrow[r] \arrow[d]      & 1 \\
            & 1                                                                 & 1                                                                & 1                                                                                        &  
\end{tikzcd}\end{center} 
As higher cohomology (with support) of injective objects vanishes, we find that both the rows and columns of the above diagram are exact. We shall denote the differential of these complexes by the letter $d$.

Let $\eta\in H^{0}_{W}(X,K^{2i-1})$ be a lift of $\overline{\xi}\in H^{2i-1}_{W}(X,\mu^{\otimes i}_{\ell^{r}})$. 
We claim that there exists a lift $\omega\in H^{0}_{W}(X,J^{2i-1})$ of $\eta$ such that the image $\omega '$ of $\omega$ in $H^{0}_{W\setminus W'}(X\setminus W',J^{2i-1})$ satisfies $d\omega '=0$. 
Indeed, let $\eta '$ denote the image of $\eta$ in $H^{0}_{W\setminus W'}(X\setminus W',K^{2i-1})$. Since $\overline{\xi}$ is the reduction modulo $\ell^{r}$ of a class $\xi\in H^{2i-1}_{W\setminus W'}(X\setminus W',\Z_{\ell}(i))$ and the reduction map $$H^{2i-1}_{W\setminus W'}(X\setminus W',\Z_{\ell}(i))\longrightarrow H^{2i-1}_{W\setminus W'}(X\setminus W',\mu^{\otimes i}_{\ell^{r}})$$ is induced by $$p\colon H^{0}_{W\setminus W'}(X\setminus W',J^\bullet)\longrightarrow H^{0}_{W\setminus W'}(X\setminus W',K^\bullet),$$ we can find a class $\tau'_{1}\in H^{0}_{W\setminus W'}(X\setminus W',J^{2i-1})$ with $d\tau'_{1}=0$ such that $p(\tau'_{1})=\eta'+d(\kappa ')$ for some $\kappa\in H^{0}_{W}(X,K^{2i-2})$ with image $\kappa'\in H^{0}_{W\setminus W'}(X\setminus W',K^{2i-2})$. Replacing $\eta$ with $\eta+d\kappa,$ we may assume $p(\tau'_{1})=\eta '$. Let $\tau_{1}\in H^{0}_{W}(X,J^{2i-1})$ be a lift of $\tau_{1} '$. Exactness of the above diagram yields $\eta-p(\tau_{1})=p(\iota_{\ast}\tau_{2})$ for some $\tau_{2}\in H^{0}_{W'}(X,J^{2i-1})$ and we take $\omega:=\tau_{1}+\iota_\ast \tau_{2}$, where $\iota_\ast \tau_2$ denotes the image of $\tau_2$ in $H^{0}_{W}(X,J^{2i-1})$.
Then $p(\omega)=\eta$ and the image $\omega '$ of $\omega$ in $H^{0}_{W\setminus W'}(X\setminus W',J^{2i-1})$ satisfies $d\omega '=d\tau_1'=0$, as claimed above.

Using that the Bockstein map $\delta\colon H^{2i-1}_{W}(X,\mu^{\otimes i}_{\ell^{r}})\longrightarrow H^{2i}_{W}(X,\Z_{\ell}(i))$ is the coboundary map of the middle vertical short exact sequence in the diagram above, we deduce $\alpha_{1}=[d\omega]$, where we regard $d\omega$ as an element in $H^{0}_{W}(X,I^{2i})$.

Finally we recall the construction of the class $\alpha_{2},$ so as to verify our claim, i.e. $\alpha_{1}=\alpha_{2}$. Indeed, the class $\omega '$ corresponds to a lift $\xi$ of $\overline{\xi}\in H^{2i-1}_{W\setminus W'}(X\setminus W',\mu^{\otimes i}_{\ell^{r}})$ in $ H^{2i-1}_{W\setminus W'}(X\setminus W',\Z_{\ell}(i))$ and an easy diagram chase gives that $\del\xi\in H^{2i}_{W'}(X,\Z_{\ell}(i))$ is the cohomology class of $d\omega\in H^{0}_{W'}(X,J^{2i})$. 
Since $p(d\omega)=d\eta=0$, we deduce $d\omega\in H^{0}_{W'}(X,I^{2i})$ and the corresponding class in $H^{2i}_{W'}(X,\Z_{\ell}(i))$
agrees with $z_{0}$. 
Hence the image of $z_{0}$ in $H^{2i}_{W}(X,\Z_{\ell}(i))$ is the cohomology class of $d\omega \in H^{0}_{W}(X,I^{2i})$, showing $\alpha_{1}=\alpha_{2},$ as we want.
This concludes the proof of the lemma.
\end{proof}

\begin{lemma} \label{lem:lambda-z1xz2}
Let $X_j$ for $j=1,2$ be smooth projective $k$-varieties and let $\ell$ be a prime invertible in $k$.
Let $z_j\in \CH^{i_j}(X_j)$ be classes such that $z_2$ is $\ell^r$-torsion.
Then
$$
\tilde \lambda_{tr}^{i_{1}+i_{2}}(z_1\times z_2)=\pr_1^\ast \left( \overline{ \cl_{X_1}^{i_1}(z_1)}\right)\cup \pr_2^\ast \left(  \tilde \lambda_{tr}^{i_2}([z_2]) \right),
$$
where $\overline{ \cl_{X_1}^{i_1}(z_1)}\in H^{2i_1}(X_1,\Q_\ell/\Z_\ell(i_1))$ denotes the reduction modulo $\ell^r$ of $ \cl_{X_1}^{i_1}(z_1)$, i.e.\ the image of $ \cl_{X_1}^{i_1}(z_1)$ via the composition
$$
H^{2i_1}(X_1,\Z_\ell(i_1))\longrightarrow H^{2i_1}(X_1,\mu_{\ell^r}^{\otimes i_1})\longrightarrow \colim_s  H^{2i_1}(X_1,\mu_{\ell^s}^{\otimes i_1})=H^{2i_1}(X_1,\Q_\ell/\Z_\ell(i_1)) .
$$
\end{lemma}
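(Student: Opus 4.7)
The strategy is to produce an explicit representative for $\tilde\lambda_{tr}^{i_1+i_2}(z_1 \times z_2)$ that is manifestly an external product, and then read off the desired formula from compatibility of proper pushforward and the residue map with external products in Borel--Moore pro-\'etale cohomology.

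Specifically, I would first choose a representative of $z_1$ supported on a pure codimension $i_1$ closed subset $Z_1 \subset X_1$; let $[z_1] \in H^0(F_0 Z_1, \Z_\ell(0)) \subset \bigoplus_{y \in X_1^{(i_1)}}[y]\Z_\ell$ denote the corresponding class. By the construction in Section \ref{subsec:constr-lambda-tilde}, there is a pure codimension $i_2-1$ closed subset $W_2 \subset X_2$ and a class $\xi_2 \in H^1(F_0 W_2, \Z_\ell(1))$ with $\del(\iota_{2,\ast}\xi_2) = \ell^r z_2$ and $\tilde\lambda_{tr}^{i_2}([z_2]) = -[\iota_{2,\ast}\bar\xi_2]$. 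I would then form the external product
$$
\xi := [z_1] \boxtimes \xi_2 \in H^1(F_0(Z_1 \times W_2), \Z_\ell(1)) \subset \bigoplus_{y \in (X_1 \times X_2)^{(i_1+i_2-1)}} H^1(y, \Z_\ell(1)).
$$

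The argument then reduces to three standard compatibilities of the twisted Borel--Moore pro-\'etale cohomology of \cite{Sch-refined}: (a) proper pushforward commutes with external products, $(\iota_1 \times \iota_2)_\ast(\alpha \boxtimes \beta) = \iota_{1,\ast}\alpha \boxtimes \iota_{2,\ast}\beta$; (b) the residue map $\del$ satisfies a Leibniz rule, $\del(\alpha \boxtimes \beta) = \del\alpha \boxtimes \beta + (-1)^{|\alpha|}\alpha \boxtimes \del\beta$; and (c) on smooth ambient varieties the external product coincides with $\pr_1^\ast(-) \cup \pr_2^\ast(-)$. Granting these, the computation is routine. Since $\del[z_1]$ lands in $\bigoplus_{y \in Z_1^{(1)}} H^{-1}(y, \Z_\ell(-1)) = 0$, rule (b) yields
$$
\del \xi = [z_1] \boxtimes \del \xi_2 = [z_1] \boxtimes (\ell^r z_2) = \ell^r (z_1 \times z_2),
$$
where we identify the external product of $[z_1]$ and $z_2$ in $\bigoplus_{y \in (X_1 \times X_2)^{(i_1+i_2)}}[y]\Z_\ell$ with the cycle $z_1 \times z_2 \in \CH^{i_1+i_2}(X_1 \times X_2)$. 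By (a) and (c),
$$
\iota_{X_1 \times X_2,\ast} \bar\xi = \overline{\iota_{1,\ast}[z_1]} \boxtimes \iota_{2,\ast}\bar\xi_2 = \pr_1^\ast \overline{\cl^{i_1}_{X_1}(z_1)} \cup \pr_2^\ast \iota_{2,\ast}\bar\xi_2,
$$
and so the definition of $\tilde\lambda_{tr}^{i_1+i_2}$ together with $\tilde\lambda_{tr}^{i_2}([z_2]) = -[\iota_{2,\ast}\bar\xi_2]$ gives
$$
\tilde\lambda_{tr}^{i_1+i_2}(z_1 \times z_2) = -[\iota_\ast \bar\xi] = \pr_1^\ast \overline{\cl^{i_1}_{X_1}(z_1)} \cup \pr_2^\ast \tilde\lambda_{tr}^{i_2}([z_2]).
$$

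The main technical obstacle is verifying (a), (b), and (c) in the framework of \cite{Sch-refined}, where external products of Borel--Moore classes are not developed explicitly. The cleanest route is via Remark \ref{rem:BM-coho}: since $X_1$ and $X_2$ are smooth (and projective), one may take them as their own smooth ambient schemes and reinterpret all relevant classes as ordinary pro-\'etale cohomology classes with support on $X_1$, $X_2$ and $X_1 \times X_2$. Under this reinterpretation (c) becomes tautological, (a) is the projection formula, and (b) reduces to naturality of the Gysin long exact sequence applied to the product pair $(X_1 \times X_2,\, (X_1 \setminus Z_1) \times X_2 \,\cup\, X_1 \times (X_2 \setminus W_2))$; at the level of Cartan--Eilenberg resolutions this parallels the diagram chase in the proof of Lemma \ref{lem:tilde-lambda-vscl}.
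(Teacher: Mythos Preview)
Your approach is essentially the same as the paper's, differing mainly in packaging. The paper constructs the class $p_1^\ast c \cup p_2^\ast \xi$ on $F_0(W_1\times W_2)$ (your $[z_1]\boxtimes\xi_2$), computes its residue via a one-sided Leibniz rule $\del(p_1^\ast c\cup p_2^\ast\xi)=p_1^\ast c\cup\del(p_2^\ast\xi)$ citing \cite[Lemma 2.4]{Sch-survey}, and then establishes your compatibility~(a) by factoring $\iota_1\times\iota_2=g\circ f$ through $X_1\times W_2$ and applying the projection formula \cite[Lemma A.19]{Sch-moving} twice, together with \cite[Lemma A.12(2)]{Sch-moving} for the pullback/pushforward compatibility; this is precisely the content of your (a)--(c) unpacked. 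One difference worth noting: the paper begins by reducing to a perfect base field so that $W_1,W_2$ are generically smooth and the cup product $p_1^\ast c\cup p_2^\ast\xi$ makes literal sense as ordinary cohomology; your route via Remark~\ref{rem:BM-coho} (cohomology with support on the smooth ambient $X_1\times X_2$) sidesteps this reduction, which is a minor advantage of your formulation.
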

\begin{proof}
By topological invariance of the \'etale, resp.\ pro-\'etale site, we may up to replacing $k$ by its perfect closure assume that $k$ is perfect (cf.\ \cite[Lemma 5.4.2]{BS}).

We will frequently cite properties of cohomology with support from \cite[Appendix A]{Sch-moving}, which applies to our setting by Remark \ref{rem:BM-coho} above.

There is a closed subset $W_2\subset X$ of pure codimension $i_2-1$ and a class $\xi\in H^1(F_0W_2,\Z_\ell(1))$ such that $\del\xi=\ell^r z_2$.
We then have $\tilde \lambda_{tr}^{i_2}([z_2]) = -[(\iota_2)_\ast \overline{\xi}]$, where $\overline \xi$ denotes the reduction of $\xi$ modulo $\ell^r$ and $(\iota_2)_\ast$ denotes the pushforward induced by the closed embedding $\iota_2:W_2\hookrightarrow X_2$.
If $\iota_1:W_1\hookrightarrow  X_1$ denotes the inclusion of the support of $z_1$, then $c:=\cl_{W_1}^0(z_{1})\in H^0(W_1,\Z_\ell(0))=H^0(F_{0}W_1,\Z_\ell(0))$ satisfies $\cl_{X_1}^{i_1}([z_1])=(\iota_1)_\ast c$.
We may then consider the class
$$
p_1 ^\ast c\cup  p_2^\ast \xi \in H^1(F_0(W_1\times W_2),\Z_\ell(1)),
$$
where $p_i^\ast$ denotes the pullback induced by the projection map $p_i:W_1\times W_2\to W_i$.
Note that we use here the reduction step that $k$ is perfect as it implies that for each $i$, $W_i$ is generically smooth and equi-dimensional in which case Borel--Moore cohomology  agrees with ordinary cohomology.
In particular, the cup product used above exists.

The residue of the above class is given by 
$$
\del \left( p_1^\ast c\cup p_2^\ast \xi \right)= p_1^\ast c\cup  \del \left( p_2^\ast \xi \right)=\ell^r\cdot(z_1\times z_2);
$$
cf.\ \cite[Lemma 2.4]{Sch-survey}.
Hence,
$$
\tilde \lambda_{tr}^{i_{1}+i_{2}}(z_1\times z_2)=-(\iota_1\times \iota_2)_\ast ( \overline{ p_1^\ast c\cup p_2^\ast \xi } ).
$$
In particular, the lemma follows once we have proven the following claim: 
\begin{align} \label{eq:lem:lambda-z1xz2}
   -(\iota_1\times \iota_2)_\ast( \overline{ p_1^\ast c\cup p_2^\ast \xi } )=\pr_1^\ast \left( (\iota_1)_\ast \overline{ c}\right)\cup \pr_2^\ast \left( - (\iota_2)_\ast \overline \xi \right)=
\pr_1^\ast \left( \overline{ \cl_{X_1}^{i_1}(z_1)}\right)\cup \pr_2^\ast \left(  \tilde \lambda_{tr}^{i_2}([z_2]) \right) ,
\end{align}
where $\pr_i^\ast$ denotes the pullback induced by the projection $\pr_i:X_1\times X_2\to X_i$.
The second equality in (\ref{eq:lem:lambda-z1xz2}) is clear and so it suffices to prove the first.
By linearity, it suffices to prove this in the case where $W_1$ is
irreducible and $c=1\cdot [W_1]\in H^0(W_1,\Z_\ell(0))$ is the fundamental class.
It then suffices to prove
\begin{align} \label{eq:lem:lambda-z1xz2-2}
 (\iota_1\times \iota_2)_\ast( \overline{ p_2^\ast \xi } )=\pr_1^\ast \left(\overline{\cl_{X_1}^{i_1}(W_1)}\right)\cup \pr_2^\ast \left((\iota_2)_\ast \overline \xi \right).
\end{align}
We consider the closed embeddings
$$
f:W_1\times W_2\to X_1\times W_2 \ \ \text{and}\ \ g:X_1\times W_2\to X_1\times X_2.
$$
Note that $\iota_1\times \iota_2=g\circ f$.
Let further $q_1: X_1\times W_2 \to X_1$ and $q_2:X_1\times W_2\to W_2$ denote the projections.
Then $p_2=q_2\circ f$ and so
$$
 (\iota_1\times \iota_2)_\ast( \overline{ p_2^\ast \xi } )=g_\ast f_\ast (f^\ast q_2^\ast \overline{\xi}) .
$$
By the projection formula (see e.g.\ \cite[Lemma A.19]{Sch-moving}), applied to $f$, we thus get
$$
 (\iota_1\times \iota_2)_\ast( \overline{ p_2^\ast \xi } )=g_\ast (f_\ast 1\cup q_2^\ast \overline{ \xi}) .
$$ 
Note that in the above equation,
$$
f_\ast 1 =g^\ast \cl_{X_1\times X_2}^{i_1}(W_1\times X_2)\in H^{2i_1}(X_1\times W_2,\mu_{\ell^r}^{\otimes i_1}).
$$
Hence
$$
 (\iota_1\times \iota_2)_\ast( \overline{ p_2^\ast \xi } )=g_\ast (g^\ast \cl_{X_1\times X_2}^{i_1}(W_1\times X_2)\cup q_2^\ast \overline{ \xi}) .
$$
Applying the projection formula with respect to $g$ thus yields
$$
 (\iota_1\times \iota_2)_\ast( \overline{ p_2^\ast \xi } )= \cl_{X_1\times X_2}^{i_1}(W_1\times X_2)\cup g_\ast  q_2^\ast \overline{ \xi} .
$$
To prove (\ref{eq:lem:lambda-z1xz2-2}) it thus suffices to show that
\begin{align} \label{eq:lem:lambda-z1xz2-3}
\cl_{X_1\times X_2}^{i_1}(W_1\times X_2)=\pr_1^\ast \left(\cl_{X_1}^{i_1}(W_1)\right)
\ \ \text{and}\ \ 
g_\ast  q_2^\ast \overline{ \xi}=\pr_2^\ast \left((\iota_2)_\ast\overline\xi\right).
\end{align}
The first identity follows directly by the compatibility of pullbacks in cohomology and Chow groups via the cycle class map (see e.g.\ \cite[Lemma A.21]{Sch-moving}).
The second identity follows from the compatibility of pullbacks and pushforwards as outlined e.g.\ in \cite[Lemma A.12(2)]{Sch-moving}, applied to the commutative diagram
$$
\xymatrix{
X_1\times (W_2\setminus Z_2)\ar[r] \ar[d]& X_1\times (X_2\setminus Z_2)\ar[d] \\
 W_2\setminus Z_2 \ar[r] &  X_2\setminus Z_2 ,
}
$$
where $Z_2\subset W_2$ is a closed subset that is nowhere dense and which contains the singular locus of $W_2$.
This concludes the proof of the lemma.
\end{proof}

\begin{corollary} \label{cor:lambda-z1xz2}
Let $X_j$ for $j=1,2$ be smooth projective $k$-varieties and let $\ell$ be a prime invertible in $k$.
Let $z_j\in \CH^{i_j}(X_j)$ be classes such that $z_2$ is $\ell^r$-torsion for some positive integer $r$.
Assume that $\cl^{i_1}_{X_1}(z_1)$ is zero modulo $\ell^r$.
Then the $\ell^r$-torsion cycle
$$
z:=z_1\times z_2\in \CH^{i_1+i_2}(X_1\times X_2)
$$
lies in the kernel of $\tilde \lambda_{tr}^{i_1+i_2}$.
\end{corollary}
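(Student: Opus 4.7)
The plan is to deduce this directly from the product formula established in Lemma \ref{lem:lambda-z1xz2}, with no additional input required. Applying that lemma to the cycles $z_1$ and $z_2$ gives
$$
\tilde \lambda_{tr}^{i_1+i_2}(z_1\times z_2)=\pr_1^\ast\bigl(\overline{\cl_{X_1}^{i_1}(z_1)}\bigr)\cup \pr_2^\ast\bigl(\tilde \lambda_{tr}^{i_2}([z_2])\bigr)
$$
in $H^{2(i_1+i_2)-1}(X_1\times X_2,\Q_\ell/\Z_\ell(i_1+i_2))$ modulo $N^{i_1+i_2-1}H^{2(i_1+i_2)-1}(X_1\times X_2,\Q_\ell(i_1+i_2))$, where $\overline{\cl_{X_1}^{i_1}(z_1)}$ is by definition the image of $\cl_{X_1}^{i_1}(z_1)$ under the reduction
$$
H^{2i_1}(X_1,\Z_\ell(i_1))\longrightarrow H^{2i_1}(X_1,\mu_{\ell^r}^{\otimes i_1})\longrightarrow H^{2i_1}(X_1,\Q_\ell/\Z_\ell(i_1)).
$$

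Next I would unpack the hypothesis. The assumption that $\cl^{i_1}_{X_1}(z_1)$ is zero modulo $\ell^r$ means precisely that its image in $H^{2i_1}(X_1,\mu_{\ell^r}^{\otimes i_1})$ vanishes, and hence so does its further image $\overline{\cl_{X_1}^{i_1}(z_1)}$ in $H^{2i_1}(X_1,\Q_\ell/\Z_\ell(i_1))$. Consequently $\pr_1^\ast(\overline{\cl_{X_1}^{i_1}(z_1)})=0$, so the cup product on the right-hand side of the formula is zero. This shows $\tilde \lambda_{tr}^{i_1+i_2}(z_1\times z_2)=0$, as required.

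There is essentially no obstacle here: the entire content has been packaged into Lemma \ref{lem:lambda-z1xz2}, and the corollary is simply the observation that the formula degenerates once the cycle class of $z_1$ is killed by $\ell^r$. The only small check is that both cup product factors, being $\ell^r$-torsion (the first by hypothesis, the second because $z_2$ is $\ell^r$-torsion), are represented by $\mu_{\ell^r}$-classes, so the cup product is well defined at that finite level before being promoted to $\Q_\ell/\Z_\ell$-coefficients — this is already implicit in the statement of Lemma \ref{lem:lambda-z1xz2}.
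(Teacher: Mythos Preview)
Your proof is correct and follows exactly the paper's approach: the paper's proof reads simply ``This is an immediate consequence of Lemma \ref{lem:lambda-z1xz2},'' and you have just unwound that implication explicitly.
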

\begin{proof}
This is an immediate consequence of Lemma \ref{lem:lambda-z1xz2}.
\end{proof}

\section{Passing to the limit over finitely generated subfields}

\subsection{Construction of \texorpdfstring{$\lambda_X^i$}{lambda}} \label{subsec:constr-lambda}

Let $X$ be an algebraic scheme over a field $k$ and let $k_0\subset k$ be a finitely generated subfield such that there is a variety $X_0$ over $k_0$ with $X=X_0\times_{k_0} k$.
For any finitely generated subfield $k'\subset k$ with $k_0\subset k'$, we consider $X_{k'}:=X_0\times_{k_0}k'$.
We then have
$$
\CH^i(X)=\lim_{\substack{\longrightarrow\\ k'/k_0}}\CH^i(X_{k'})\ \ \text{and}\ \  \CH^i(X)[\ell^\infty]=\lim_{\substack{\longrightarrow\\ k'/k_0}}A^i(X_{k'})[\ell^\infty],
$$
where the limit runs through all finitely generated subfields $k'\subset k$ with $k_0\subset k'$ and where we used $\CH^i(X_{k'})_{\Z_\ell}=A^i(X_{k'})_{\Z_\ell}$ for $k'$ finitely generated; cf.\ \cite[Lemma 7.5]{Sch-refined}.
Using this, we define $\lambda_X^i$  as the direct limit of $\tilde \lambda_{tr}^i$ (see Section \ref{subsec:constr-lambda-tilde}), applied to $X_{k'}$ for all finitely generated fields $k'$ as above:
$$
\lambda_X^i:=\lim_{\substack{\longrightarrow\\ k'/k_0}} \tilde \lambda_{tr}^i : \CH^i(X)[\ell^\infty]=\lim_{\substack{\longrightarrow\\ k'/k_0}}A^i(X_{k'})[\ell^\infty]\longrightarrow \lim_{\substack{\longrightarrow\\ k'/k_0}} \frac{H^{2i-1}(X_{k'},\Q_\ell/\Z_\ell(i))}{N^{i-1}H^{2i-1}(X_{k'},\Q_\ell(i))}   =\frac{ H^{2i-1}(X ,\Q_\ell/\Z_\ell(i))}{M^{2i-1}(X)}, 
$$
where we use that 
$$
H^{2i-1}(X,\Q_\ell/\Z_\ell(n))\cong \lim_{\substack{\longrightarrow\\ k'/k_0}}H^{2i-1}(X_{k'},\Q_\ell/\Z_\ell(n)) ,
$$
and where 
 $$
M^{2i-1}(X):=\im\left( \lim_{\substack{\longrightarrow \\ k'/k_0}} N^{i-1} H^{2i-1}(X_{k'},\Q_\ell(i))\longrightarrow  H^{2i-1}(X,\Q_\ell/\Z_\ell(i)) \right) .
$$ 
\par If $X$ is smooth and equi-dimensional (e.g. a smooth variety), then Borel--Moore cohomology agrees with ordinary cohomology (see \eqref{eq:ordinary-cohomology}) and the above map yields a cycle map as in (\ref{def:lambda}).

\begin{lemma} \label{lem:M^2i-1}
Assume that $X$ is a smooth projective $k$-variety.
In the following special cases, the group $M^{2i-1}(X)$ can be computed explicitly as follows:
\begin{enumerate}
    \item If $k$ is algebraically closed, then $M^{2i-1}(X)=0$ for all $i$;\label{item:M^2i-1:1}
    \item If $k$ is a finite field, then $M^{2i-1}(X)=0$ for all $i$;\label{item:M^2i-1:2}
    \item \label{item:M^2i-1:3} If $i=1$, $k$ is arbitrary, and $X$ is geometrically integral, then \begin{align*}M^1(X)=\im(H^1(\Spec k,\Q_\ell/\Z_\ell(1))\to H^1(X,\Q_\ell/\Z_\ell(1))).\end{align*}
\end{enumerate}
\end{lemma}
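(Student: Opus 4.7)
The plan is to handle all three parts by a common strategy: compute $H^{2i-1}(X_{k'}, \Q_\ell(i))$ by means of the Hochschild--Serre spectral sequence
\[
E_2^{p,q} = H^p(G_{k'}, H^q(X_{\overline{k'}}, \Q_\ell(i))) \Longrightarrow H^{p+q}(X_{k'}, \Q_\ell(i)),
\]
combined with Deligne's purity for smooth projective varieties and the standard vanishing of $G_{k'}$-invariants on pure representations of nonzero weight whenever $k'$ is finitely generated over its prime field. I expect this weight input to be the main obstacle; the usual verification spreads $X_{k'}$ to a smooth proper model over a finitely generated $\Z[1/\ell]$-algebra, specializes to a closed point $s$, and invokes Weil~II: $\operatorname{Frob}_s$ acts on a pure weight $w$ module with eigenvalues of absolute value $|\kappa(s)|^{w/2}$, so when $w\neq 0$ the value $1$ is excluded, $\operatorname{Frob}_s$-invariants vanish, and a fortiori so do $G_{k'}$-invariants.

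For part \eqref{item:M^2i-1:1}, with $k$ algebraically closed, the base change map $H^{2i-1}(X_{k'}, \Q_\ell(i)) \to H^{2i-1}(X, \Q_\ell(i))$ factors through the $G_{k'}$-invariants of $H^{2i-1}(X_{\overline{k'}}, \Q_\ell(i)) = H^{2i-1}(X, \Q_\ell(i))$ (identification by smooth-proper base change between algebraically closed fields of the same characteristic). Since this cohomology is pure of weight $-1$, the weight principle above kills the invariants and the base change map is zero. Composing with $H^{2i-1}(X, \Q_\ell(i)) \to H^{2i-1}(X, \Q_\ell/\Z_\ell(i))$, restricting to $N^{i-1}$, and passing to the colimit over $k'$ yields $M^{2i-1}(X) = 0$.

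For part \eqref{item:M^2i-1:2}, $k$ finite is itself finitely generated, so I take $k_0 = k$ and the only admissible $k'$ is $k$. Since $G_k \cong \widehat{\Z}$, Hochschild--Serre collapses into the short exact sequence
\[
0 \to H^1(G_k, H^{2i-2}(X_{\overline{k}}, \Q_\ell(i))) \to H^{2i-1}(X, \Q_\ell(i)) \to H^0(G_k, H^{2i-1}(X_{\overline{k}}, \Q_\ell(i))) \to 0,
\]
whose outer terms vanish: on the weight $-2$ and weight $-1$ modules, Frobenius has eigenvalues of absolute value $q^{-1}$ and $q^{-1/2}$ respectively, so neither $1$ is an eigenvalue and there are no coinvariants nor invariants. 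Hence $H^{2i-1}(X, \Q_\ell(i)) = 0$ and $M^{2i-1}(X) = 0$ follows trivially.

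For part \eqref{item:M^2i-1:3}, geometric integrality gives $H^0(X_{\overline{k'}}, \Q_\ell(1)) = \Q_\ell(1)$, while $H^1(X_{\overline{k'}}, \Q_\ell(1))$ is pure of weight $-1$ and has vanishing $G_{k'}$-invariants by the weight principle. Hochschild--Serre then produces, for every $k'$, an isomorphism $H^1(\Spec k', \Q_\ell(1)) \xrightarrow{\cong} H^1(X_{k'}, \Q_\ell(1))$ induced by pullback along the structure map $X_{k'} \to \Spec k'$. By naturality of $\Q_\ell(1) \to \Q_\ell/\Z_\ell(1)$ and the fact that $\Q_\ell/\Z_\ell$-cohomology commutes with filtered colimits of schemes, $M^1(X)$ is identified with the image of
\[
H^1(\Spec k, \Q_\ell(1)) \to H^1(\Spec k, \Q_\ell/\Z_\ell(1)) \to H^1(X, \Q_\ell/\Z_\ell(1)).
\]
To finish I would show that the first arrow is surjective, equivalently that the Bockstein $H^1(\Spec k, \Q_\ell/\Z_\ell(1)) \to H^2(\Spec k, \Z_\ell(1))$ vanishes; its image lies in the torsion subgroup of the target, but Kummer gives $H^1(\Spec k, \mu_{\ell^r}) = k^*/(k^*)^{\ell^r}$ with surjective transition maps, so the $\lim^1$ term in Jannsen's continuity sequence vanishes and $H^2(\Spec k, \Z_\ell(1)) = T_\ell \Br(k)$ is torsion-free, as required.
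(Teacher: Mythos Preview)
Your proposal is correct and follows essentially the same strategy as the paper: all three parts rest on Deligne's weight bounds combined with Hochschild--Serre, and your Kummer/Bockstein argument for the surjectivity of $H^1(\Spec k,\Q_\ell(1))\to H^1(\Spec k,\Q_\ell/\Z_\ell(1))$ matches the paper's. The only noteworthy variation is in part~\eqref{item:M^2i-1:2}: you compute $H^{2i-1}(X,\Q_\ell(i))=0$ directly via the two-line Hochschild--Serre sequence and Frobenius eigenvalues, whereas the paper instead invokes the finiteness of $H^{2i-1}(X,\Q_\ell/\Z_\ell(i))$ (from \cite{CTSS}) to kill the map out of the divisible group $H^{2i-1}(X,\Q_\ell(i))$; both are short consequences of the Weil conjectures. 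One small expository point in part~\eqref{item:M^2i-1:3}: when you write $M^1(X)$ as the image of $H^1(\Spec k,\Q_\ell(1))\to\cdots$, strictly speaking the definition gives the image of $\colim_{k'} H^1(\Spec k',\Q_\ell(1))$; your Bockstein argument works verbatim for each finitely generated $k'$, so the colimit still surjects onto $H^1(\Spec k,\Q_\ell/\Z_\ell(1))$ and the conclusion is unaffected.
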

\begin{proof}
Assume first that $k$ is algebraically closed.
By the Weil conjectures proven by Deligne, the group $H^{2i-1}(X,\Q_\ell (i))$ does not contain any nontrivial element that is fixed by the absolute Galois group of a finitely generated subfield $k'\subset k$.
Hence, the natural map
$ 
 H^{2i-1}(X_{k'},\Q_\ell(i))\to  H^{2i-1}(X,\Q_\ell (i))
$ 
is zero for any finitely generated field $k'\subset k$.
This implies $M^{2i-1}(X)=0$ as we want in \eqref{item:M^2i-1:1}.

Assume now that $k$ is a finite field.
The Weil conjectures imply that $H^{2i-1}(X,\Q_\ell/\Z_\ell(i))$ is a finite group (see \cite[Théor\`eme 2]{CTSS}) and so the map $H^{2i-1}(X,\Q_\ell(i))\to H^{2i-1}(X,\Q_\ell/\Z_\ell(i))$ is zero.
This implies $M^{2i-1}(X)=0$ as claimed.

It suffices by a limit argument to prove the last claim in the case where $k$ is an arbitrary finitely generated field.
Let $G=\Gal_k$ be the absolute Galois group of $k$ and let $\bar k$ be an algebraic (or separable) closure of $k$.
Then the Hochschild--Serre spectral sequence from \cite{jannsen} yields an exact sequence
$$
H^1(\Spec k,\Q_\ell(1))\longrightarrow H^1(X,\Q_\ell(1))\longrightarrow H^1(X_{\bar k},\Q_\ell(1))^G,
$$
where we use that $H^0(X_{\bar k},\Q_\ell(1))=\Q_\ell(1)$ because $X$ is geometrically integral.
The Weil conjectures proven by Deligne \cite{deligne} imply that $H^1(X_{\bar k},\Q_\ell(1))^G=0$ and so the first map in the above sequence is surjective.
Moreover, Kummer theory implies that $H^1(\Spec k,\Q_\ell(1))\to H^1(\Spec k,\Q_\ell/\Z_\ell(1)) $ is surjective.
Since $N^0H^1(X,\Q_\ell(1))=H^1(X,\Q_\ell(1))$, we finally conclude
$$
M^1(X)=\im(H^1(X,\Q_\ell(1))\to H^1(X,\Q_\ell/\Z_\ell(1)))=\im (H^1(\Spec k,\Q_\ell/\Z_\ell(1))\to H^1(X,\Q_\ell/\Z_\ell(1))) ,
$$
as we want.
This finishes the proof of the lemma.
\end{proof}

\subsection{Basic properties of \texorpdfstring{$\lambda_X^i$}{lambda}}

\begin{theorem} \label{thm:lambda-body}
Let $X$ be an algebraic scheme over a field $k$ and let $\ell$ be a prime invertible in $k$. 
Then the cycle map
$$
\lambda_X^i  : \CH^i(X)[\ell^\infty] \longrightarrow  \frac{ H^{2i-1}_{BM}(X ,\Q_\ell/\Z_\ell(i))}{M^{2i-1}(X)}  
$$
is injective for $i\leq 2$.
\end{theorem}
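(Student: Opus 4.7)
The plan is to reduce the theorem directly to Proposition \ref{prop:tilde-lambda} by a standard filtered colimit argument; no new cohomological input should be required beyond what has already been established.

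First I would unravel the construction of $\lambda_X^i$ from Section \ref{subsec:constr-lambda}: fix a finitely generated subfield $k_0\subset k$ and an algebraic $k_0$-scheme $X_0$ with $X=X_0\times_{k_0}k$, and for each finitely generated $k'$ with $k_0\subset k'\subset k$ set $X_{k'}=X_0\times_{k_0}k'$. By definition
$$
\lambda_X^i=\lim_{\substack{\longrightarrow\\ k'/k_0}} \tilde\lambda_{tr}^i,
$$
where each $\tilde\lambda_{tr}^i\colon A^i(X_{k'})[\ell^\infty]\to H^{2i-1}_{BM}(X_{k'},\Q_\ell/\Z_\ell(i))/N^{i-1}H^{2i-1}_{BM}(X_{k'},\Q_\ell(i))$ is the auxiliary map of \eqref{def:tilde-lambda}. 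The identifications $\CH^i(X)[\ell^\infty]=\varinjlim_{k'}A^i(X_{k'})[\ell^\infty]$ (using $\CH^i(X_{k'})_{\Z_\ell}=A^i(X_{k'})_{\Z_\ell}$ for finitely generated $k'$, cf.\ \cite[Lemma 7.5]{Sch-refined}) and the definition of $M^{2i-1}(X)$ as the image of $\varinjlim_{k'}N^{i-1}H^{2i-1}_{BM}(X_{k'},\Q_\ell(i))$ inside $H^{2i-1}_{BM}(X,\Q_\ell/\Z_\ell(i))$ ensure that $\lambda_X^i$ has the source and target claimed in the theorem.

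Next I would invoke that filtered colimits in the category of abelian groups are exact; in particular they preserve injectivity, and commute with taking images and quotients. Applying this to the diagram $\{\tilde\lambda_{tr}^i\ \text{on}\ X_{k'}\}_{k'/k_0}$ identifies the target of the colimit map with $H^{2i-1}_{BM}(X,\Q_\ell/\Z_\ell(i))/M^{2i-1}(X)$ by the very definition of $M^{2i-1}(X)$, and reduces the injectivity of $\lambda_X^i$ to the injectivity of $\tilde\lambda_{tr}^i$ over each finitely generated base $k'$.

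Finally, since each $k'$ is finitely generated and $\ell$ is invertible in $k'$, Proposition \ref{prop:tilde-lambda} applies and gives injectivity of $\tilde\lambda_{tr}^i$ on $X_{k'}$ for $i\le 2$. Passing to the colimit over $k'/k_0$ yields the desired injectivity of $\lambda_X^i$ for $i\le 2$. There is no real obstacle here beyond the bookkeeping of sources and targets under the colimit; the genuine content of the statement was absorbed into the earlier Proposition \ref{prop:tilde-lambda}, whose proof in turn rested on \cite[Theorem 1.8(2)]{Sch-refined} together with Lemma \ref{lem:tilde-lambda-vslambda}.
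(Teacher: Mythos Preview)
Your proposal is correct and takes essentially the same approach as the paper: both reduce directly to Proposition~\ref{prop:tilde-lambda} via the description of $\lambda_X^i$ as a filtered colimit of the maps $\tilde\lambda_{tr}^i$ over finitely generated subfields. The paper's proof is a one-line reference to Proposition~\ref{prop:tilde-lambda}, while you have simply made the (routine) colimit bookkeeping explicit.
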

\begin{proof}
This follows from Proposition \ref{prop:tilde-lambda}, where we recall our convention that 
$ H^{\ast} (X ,A(n))=H^{\ast}_{BM}(X ,A(n))$ denotes Borel--Moore cohomology.
\end{proof}

The following result generalises \cite[(18.4)]{MS}.

\begin{lemma}\label{lem:image-lambda} Let $X$ be an algebraic scheme over a field $k$ and let $\ell$ be a prime invertible in $k$. Then the image of $\lambda^{i}_{X}$ is given by $$\im(\lambda^{i}_{X})=\frac{N^{i-1} H^{2i-1}_{BM}(X ,\Q_\ell/\Z_\ell(i))}{M^{2i-1}(X)}.$$
\end{lemma}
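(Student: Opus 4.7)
The plan is to deduce Lemma \ref{lem:image-lambda} from Proposition \ref{prop:image-tilde-lambda} by passing to the direct limit over finitely generated subfields $k'\subset k$ containing $k_0$, using the construction of $\lambda_X^i$ in Section \ref{subsec:constr-lambda} as the colimit of the maps $\tilde \lambda_{tr}^i$ applied to $X_{k'}$. Since direct limits are exact, images commute with colimits, so that from
$$
\im\bigl(\tilde \lambda_{tr}^i\colon A^i(X_{k'})[\ell^\infty]\to H^{2i-1}(X_{k'},\Q_\ell/\Z_\ell(i))/N^{i-1}H^{2i-1}(X_{k'},\Q_\ell(i))\bigr)=\frac{N^{i-1}H^{2i-1}(X_{k'},\Q_\ell/\Z_\ell(i))}{N^{i-1}H^{2i-1}(X_{k'},\Q_\ell(i))}
$$
provided by Proposition \ref{prop:image-tilde-lambda}, I will obtain the desired formula for $\im(\lambda_X^i)$ after taking $\colim_{k'/k_0}$.

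The first step is to identify the colimit of the denominators: by definition of $M^{2i-1}(X)$ in Section \ref{subsec:constr-lambda} (and reviewed in the introduction) the image of $\colim_{k'/k_0} N^{i-1}H^{2i-1}(X_{k'},\Q_\ell(i))$ in $H^{2i-1}(X,\Q_\ell/\Z_\ell(i))$ is precisely $M^{2i-1}(X)$. The second step, which is the only point requiring a bit of care, is to identify the colimit of the numerators with $N^{i-1}H^{2i-1}(X,\Q_\ell/\Z_\ell(i))$. For this I would argue as follows. Any open $U\subset X$ with $\dim(X\setminus U)<\dim X-(i-2)$ is defined over some finitely generated $k'\supset k_0$ as $U=U_{k'}\times_{k'}k$ with $\dim(X_{k'}\setminus U_{k'})=\dim(X\setminus U)$, and conversely each such $U_{k'}$ pulls back to such a $U$. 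Consequently
$$
H^{2i-1}(F_{i-2}X,\Q_\ell/\Z_\ell(i))\cong \colim_{k'/k_0}H^{2i-1}(F_{i-2}X_{k'},\Q_\ell/\Z_\ell(i)),
$$
and combining this with the analogous identity for $H^{2i-1}(X,\Q_\ell/\Z_\ell(i))$ (already used in Section \ref{subsec:constr-lambda}) plus exactness of filtered colimits applied to the defining exact sequence of $N^{i-1}$ yields
$$
N^{i-1}H^{2i-1}(X,\Q_\ell/\Z_\ell(i))\cong \colim_{k'/k_0} N^{i-1}H^{2i-1}(X_{k'},\Q_\ell/\Z_\ell(i)).
$$

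Finally I assemble the two identifications: the image of $\lambda_X^i$ is the colimit over $k'$ of the images of the $\tilde\lambda_{tr}^i$ on $X_{k'}$, which by Proposition \ref{prop:image-tilde-lambda} is the quotient
$$
\frac{\colim_{k'/k_0}N^{i-1}H^{2i-1}(X_{k'},\Q_\ell/\Z_\ell(i))}{\im\bigl(\colim_{k'/k_0}N^{i-1}H^{2i-1}(X_{k'},\Q_\ell(i))\to H^{2i-1}(X,\Q_\ell/\Z_\ell(i))\bigr)}=\frac{N^{i-1}H^{2i-1}_{BM}(X,\Q_\ell/\Z_\ell(i))}{M^{2i-1}(X)},
$$
which is the claim. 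The only mild obstacle is the compatibility of the coniveau filtration with the direct limit over finitely generated subfields, which is handled by the open-subset and dimension argument described above; everything else is an exactness formality once Proposition \ref{prop:image-tilde-lambda} is in hand.
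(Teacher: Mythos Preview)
Your proposal is correct and follows exactly the approach of the paper, which simply cites Proposition \ref{prop:image-tilde-lambda} together with the direct-limit construction of $\lambda_X^i$ in Section \ref{subsec:constr-lambda}. You have merely made explicit the compatibility of the coniveau filtration with the colimit over finitely generated subfields, a detail the paper leaves implicit.
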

\begin{proof}This follows from Proposition \ref{prop:image-tilde-lambda} together with the construction of $\lambda_X^i$ via direct limit in Section \ref{subsec:constr-lambda}.
\end{proof}

\begin{corollary} \label{cor:lambda-body}
Let $X$ be an algebraic scheme over a field $k$ and let $\ell$ be a prime invertible in $k$. 
Then $\lambda_X^i$ induces for $i\in \{1,2\}$ isomorphisms
$$
\CH^1(X)[\ell^\infty] \cong  \frac{ H^{1}_{BM}(X ,\Q_\ell/\Z_\ell(1))}{M^{1}(X)}\ \ \text{and}\ \ \CH^2(X)[\ell^\infty] \cong  \frac{N^{1} H^{3}_{BM}(X ,\Q_\ell/\Z_\ell(2))}{M^{3}(X)}    .
$$
\end{corollary}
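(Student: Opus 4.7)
The plan is to deduce this corollary as an immediate consequence of the two results established just above it. Theorem \ref{thm:lambda-body} shows that $\lambda_X^i$ is injective for $i \leq 2$, while Lemma \ref{lem:image-lambda} identifies its image with $N^{i-1} H^{2i-1}_{BM}(X,\Q_\ell/\Z_\ell(i))/M^{2i-1}(X)$. Combining these two facts, $\lambda_X^i$ factors through an isomorphism
$$
\CH^i(X)[\ell^\infty] \cong \frac{N^{i-1} H^{2i-1}_{BM}(X,\Q_\ell/\Z_\ell(i))}{M^{2i-1}(X)}
$$
for $i \in \{1,2\}$.

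For $i = 2$ this is exactly the second isomorphism stated in the corollary, so nothing further is needed in that case. For $i = 1$, I would additionally observe that $N^0 H^1_{BM}(X,\Q_\ell/\Z_\ell(1)) = H^1_{BM}(X,\Q_\ell/\Z_\ell(1))$, which then gives the first stated isomorphism. By the definition \eqref{def:N^j}, $N^0 H^i$ equals the kernel of the restriction $H^i(X,A(n)) \to H^i(F_{-1}X, A(n))$; under the convention $F_{-1}X = \emptyset$ (used explicitly in the proof of Proposition \ref{prop:tilde-lambda}) the target vanishes, and therefore $N^0 H^1 = H^1$ as required.

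There is essentially no obstacle to this argument: the corollary is a formal repackaging of Theorem \ref{thm:lambda-body} and Lemma \ref{lem:image-lambda}, with the only extra input being the trivial identification of $N^0$ with the full cohomology group in the codimension-one case.
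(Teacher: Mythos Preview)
Your proposal is correct and matches the paper's proof essentially verbatim: the paper also derives the corollary directly from Theorem~\ref{thm:lambda-body} (injectivity) and Lemma~\ref{lem:image-lambda} (image), and the observation that $N^0H^i(X,A(n))=H^i(X,A(n))$ appears in the paper's proof of Theorem~\ref{thm:Sch} rather than here, but is exactly the extra ingredient you supply for $i=1$.
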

\begin{proof}
By Lemma \ref{lem:image-lambda},  
$ 
\im(\lambda_X^i)= N^{i-1} H^{2i-1}_{BM}(X ,\Q_\ell/\Z_\ell(i))/ M^{2i-1}(X) .
$ 
Hence the result is an immediate consequence of Theorem \ref{thm:lambda-body}.\end{proof}

Comparing the above construction with Bloch's map from \cite{bloch-compositio}, we get the following:

\begin{lemma} \label{lem:lambda_X^i=Bloch}
If $k$ is algebraically closed and $X$ is a smooth projective variety over $k$, then $M^{2i-1}(X)=0$ for all $i$ and the map
$$
\lambda_X^i: \CH^i(X)[\ell^\infty]\longrightarrow  H^{2i-1}(X ,\Q_\ell/\Z_\ell(i))
$$
agrees with Bloch's map from \cite{bloch-compositio}.
In particular, if $k=\C$ then $\lambda_X^i$ restricted to the subgroup of homologically trivial cycles can be identified with Griffiths Abel--Jacobi map from \cite{griffiths}.
\end{lemma}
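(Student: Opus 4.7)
The vanishing $M^{2i-1}(X)=0$ is already established in Lemma \ref{lem:M^2i-1}(\ref{item:M^2i-1:1}): Deligne's theorem shows that no nonzero class in $H^{2i-1}(X_{k'},\Q_\ell(i))$ can become Galois-invariant after base change to $k$, hence every such class dies in $H^{2i-1}(X,\Q_\ell(i))$, a fortiori in the $\Q_\ell/\Z_\ell$-coefficient cohomology. So only the comparison with Bloch's map needs to be carried out, and the Griffiths statement will then drop out by invoking \cite[Proposition 3.7]{bloch-compositio}.

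My plan is to unwind the two constructions and observe that they literally coincide. Recall Bloch's construction from \cite{bloch-compositio}: given a class $[z]\in\CH^i(X)[\ell^r]$, one uses the Bloch--Ogus / Gysin formalism to write $\ell^r z=\del(\iota_\ast\xi)$ for some $\xi\in H^1(F_0 W,\Z_\ell(1))$ on a codimension $(i-1)$ closed subset $W\subset X$ (the relevant $H^1$-class being a divisor on $W$ whose class realizes $\ell^r z$, using Hilbert 90 / the Kummer sequence to lift $\bar\xi$ to $\xi$). Reducing modulo $\ell^r$ produces $\bar\xi\in H^1(W,\mu_{\ell^r})$ with vanishing residues, and Bloch's map sends $[z]$ to the image of $-\iota_\ast\bar\xi$ in $H^{2i-1}(X,\mu_{\ell^r}^{\otimes i})$, followed by the obvious map into $H^{2i-1}(X,\Q_\ell/\Z_\ell(i))$. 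Comparing with the explicit formula $\tilde\lambda_{tr}^i([z])=-[\iota_\ast\bar\xi]$ from Section \ref{subsec:constr-lambda-tilde} shows that $\tilde\lambda_{tr}^i$ coincides with Bloch's construction term by term (the minus sign was inserted in our definition exactly to match Bloch's convention, cf.\ the parenthetical at the end of Section \ref{subsec:constr-lambda-tilde}).

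It remains to reconcile this with the direct limit construction of $\lambda_X^i$ in Section \ref{subsec:constr-lambda}. Since $k$ is algebraically closed, we have $M^{2i-1}(X)=0$, so the target of $\lambda_X^i$ is simply $H^{2i-1}(X,\Q_\ell/\Z_\ell(i))$. Writing $k=\varinjlim k'$ over finitely generated subfields $k'\subset k$ containing the field of definition $k_0$ of $X$, every $\ell^\infty$-torsion cycle on $X$ is defined over some $X_{k'}$, and the construction of $\tilde\lambda_{tr}^i$ is compatible with base change $k'\to k$ by the naturality of the Gysin sequence, the Bockstein, and the reduction map modulo $\ell^r$ (all of which enter $\tilde\lambda_{tr}^i$). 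Hence $\lambda_X^i$ equals Bloch's map as maps $\CH^i(X)[\ell^\infty]\to H^{2i-1}(X,\Q_\ell/\Z_\ell(i))$. The final Griffiths identification over $\C$ then follows at once from \cite[Proposition 3.7]{bloch-compositio}.

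The only real point of caution will be the bookkeeping in the comparison step: Bloch phrases his construction using divisors on codimension $(i-1)$ cycles together with the Kummer sequence, whereas our construction is phrased in the Borel--Moore / refined unramified framework of \cite{Sch-refined}. The identification $H^1(x,\Z_\ell(1))\twoheadrightarrow H^1(x,\mu_{\ell^r})$ via Hilbert 90 (already invoked in the proof of Proposition \ref{prop:image-tilde-lambda}) is the bridge between the two formalisms, and once this is in place the verification is a straightforward unwinding of definitions rather than any substantive argument.
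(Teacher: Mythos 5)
Your proposal is correct and takes essentially the same approach as the paper: the paper's proof simply cites Lemma \ref{lem:M^2i-1} for the vanishing of $M^{2i-1}(X)$, declares that the comparison of $\lambda_X^i$ with Bloch's map over algebraically closed fields is ``straightforward to check,'' and invokes \cite[Proposition 3.7]{bloch-compositio} for Griffiths, exactly as you do. Your write-up just spells out the unwinding of definitions (the formula $\tilde\lambda_{tr}^i([z])=-[\iota_\ast\bar\xi]$, Hilbert 90, base-change compatibility of the Gysin sequence) that the paper leaves implicit; the one small imprecision is that the Bockstein does not actually enter the definition of $\tilde\lambda_{tr}^i$ (it appears only in Lemma \ref{lem:tilde-lambda-vscl}), but this does not affect the argument.
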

\begin{proof}
The vanishing of $M^{2i-1}(X)$ follows from Lemma \ref{lem:M^2i-1}.
Using this it is straightforward to check that our map $\lambda_X^i$ coincides over algebraically closed fields with Bloch's map from \cite{bloch-compositio}.
The comparison with Griffiths' map thus follows from \cite[Proposition 3.7]{bloch-compositio}.
This concludes the proof of the lemma.
\end{proof}


\begin{lemma}\label{lem:reduction-from-k-to-k'}
Let $X$ be an algebraic scheme  over a field $k$ and let $k_0\subset k$ be a finitely generated subfield such that there is a variety $X_0$ over $k_0$ with $X=X_0\times_{k_0} k$. Let $\ell$ be a prime invertible in $k$.
Assume that $\lambda_X^i$ is not injective. 
Let $L\subset k$ be any subfield with $k_0\subset L$.
Then there is a subfield $k'\subset k$ with $L\subset k'$, such that $k'/L$ is finitely generated and 
$$
\lambda_{X'}^i:\CH^i(X')[\ell^\infty]\longrightarrow \frac{ H^{2i-1}(X' ,\Q_\ell/\Z_\ell(i))}{M^{2i-1}(X')}
$$
is not injective, where  $X':=X_{0}\times_{k_0}k'$.
\end{lemma}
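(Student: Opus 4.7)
The plan is to unwind the colimit definitions of $\lambda_X^i$, $\CH^i(X)[\ell^\infty]$, and $M^{2i-1}(X)$ from Section \ref{subsec:constr-lambda} and to produce a single finitely generated extension $k'/L$ inside $k$ over which both a nonzero torsion class and a class witnessing that its image lies in $M^{2i-1}(X)$ are already defined and coincide.

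First I would pick a nonzero $\alpha \in \ker(\lambda_X^i)$. Using the identification $\CH^i(X)[\ell^\infty] = \varinjlim_K A^i(X_K)[\ell^\infty]$ over finitely generated $K\subset k$ containing $k_0$, I would choose such a $K_1$ and a lift $\tilde\alpha \in A^i(X_{K_1})[\ell^\infty]$ of $\alpha$. Since $\lambda_X^i(\alpha) = 0$ and $M^{2i-1}(X)$ is the image of $\varinjlim_K N^{i-1} H^{2i-1}(X_K,\Q_\ell(i))$ in $H^{2i-1}(X,\Q_\ell/\Z_\ell(i))$, there exists a finitely generated $K_2\subset k$ containing $k_0$ and a class $\beta \in N^{i-1} H^{2i-1}(X_{K_2},\Q_\ell(i))$ whose reduction $\bar\beta$ modulo $\Z_\ell$ has the same image as $\tilde\lambda_{tr}^i(\tilde\alpha)$ in $H^{2i-1}(X,\Q_\ell/\Z_\ell(i))$. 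Because $H^{2i-1}(X,\Q_\ell/\Z_\ell(i))$ is itself the filtered colimit of the groups $H^{2i-1}(X_K,\Q_\ell/\Z_\ell(i))$ (as used in the construction of $\lambda_X^i$), I would then enlarge to a single finitely generated $K_3 \subset k$ containing $K_1 K_2$ such that
$$\tilde\lambda_{tr}^i(\tilde\alpha_{K_3}) = \bar\beta_{K_3} \quad \text{in } H^{2i-1}(X_{K_3},\Q_\ell/\Z_\ell(i)).$$

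Next I would set $k' := K_3 \cdot L \subset k$. Since $K_3$ is finitely generated over $k_0 \subset L$, the field $k'$ is finitely generated over $L$, and $L \subset k'$ is clear. Setting $X' := X_0 \times_{k_0} k'$, let $\alpha' \in \CH^i(X')[\ell^\infty]$ be the image of $\tilde\alpha_{K_3}$ under the structural map $A^i(X_{K_3})[\ell^\infty] \to \CH^i(X')[\ell^\infty]$. The composite $\CH^i(X')[\ell^\infty] \to \CH^i(X)[\ell^\infty]$ sends $\alpha'$ to $\alpha\neq 0$, so $\alpha' \neq 0$. On the other hand, $\lambda_{X'}^i(\alpha')$ is represented by the image of $\tilde\lambda_{tr}^i(\tilde\alpha_{K_3}) = \bar\beta_{K_3}$ in $H^{2i-1}(X',\Q_\ell/\Z_\ell(i))$; since $K_3 \subset k'$ is still finitely generated over $k_0$ and $\beta_{K_3} \in N^{i-1} H^{2i-1}(X_{K_3},\Q_\ell(i))$ (coniveau is preserved under flat pullback), this image lies in $M^{2i-1}(X')$, and hence $\lambda_{X'}^i(\alpha') = 0$.

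The main obstacle is not conceptual but organizational: one has to juggle the nested colimits (over $K/k_0$ defining Chow groups, cohomology and $M^{2i-1}$ for both $X$ and $X'$), and observe the one genuinely arithmetic fact driving the statement, namely that the compositum $K_3 \cdot L$ is automatically finitely generated over $L$ whenever $K_3/k_0$ is finitely generated with $k_0\subset L$. No new ideas beyond the formalism of Section \ref{subsec:constr-lambda} are required.
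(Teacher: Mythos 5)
Your proof is correct and follows essentially the same route as the paper's: use the filtered-colimit structure of $\CH^i(X)[\ell^\infty]$, $H^{2i-1}(X,\Q_\ell/\Z_\ell(i))$ and $M^{2i-1}(X)$ to descend both a nonzero torsion class in $\ker(\lambda_X^i)$ and the witness of its vanishing to a common finitely generated stage $K_3\subset k$, then set $k':=K_3\cdot L$ and note that $k'/L$ is finitely generated because $K_3/k_0$ is. The only small slip is the displayed equation $\tilde\lambda_{tr}^i(\tilde\alpha_{K_3})=\bar\beta_{K_3}$ in $H^{2i-1}(X_{K_3},\Q_\ell/\Z_\ell(i))$, which is a type mismatch since $\tilde\lambda_{tr}^i$ lands in the quotient by $N^{i-1}H^{2i-1}(X_{K_3},\Q_\ell(i))$; what you mean (and later use) is that $\tilde\lambda_{tr}^i(\tilde\alpha_{K_3})$ already vanishes in that quotient at stage $K_3$, which is exactly what the filtered-colimit argument gives.
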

\begin{proof}
By assumption there is a  nontrivial element $[z]\in \CH^i(X)[\ell^\infty]$ with $\lambda_X^i([z])=0$.
We can choose a finitely generated extension $k'/L$ such that $z$ is defined over $k'$ and so we get a class 
$$
[z']\in  \CH^i(X') \ \ \text{with}\ \ [z'_k]=[z]\in \CH^i(X)[\ell^\infty]
$$
where $X'=X_{0}\times_{k_0}k'$.
Up to possibly  replacing $k'$ by a larger finitely generated extension of $L$, we can assume that $[z']$ is $\ell^\infty$-torsion and so there is a class
$$
\lambda_{X'}^i([z'])\in \frac{ H^{2i-1}(X' ,\Q_\ell/\Z_\ell(i))}{M^{2i-1}(X')}.
$$ 
Note that $[z'_k]=[z]$ lies in the kernel of $\lambda_X^i$.
Since $\lambda_X^i$ and $\lambda_{X'}^i$ are defined via direct limits over all finitely generated subfields of $k$ and $k'$, respectively, we see that up to possibly replacing $k'$ by a larger finitely generated extension of $L$, we may assume that $\lambda_{X'}^i([z'])=0$ while $[z']\in \CH^i(X')$ is still non-trivial because its base change to $k$ is  nontrivial.
This proves the lemma.
\end{proof}

Next, we have the following.

\begin{lemma} \label{lem:lambda-cl}
Let $X$ be an algebraic $k$-scheme which admits a closed embedding into a smooth equi-dimensional algebraic $k$-scheme (e.g. $X$ is quasi-projective) and let $\ell$ be a prime invertible in $k$. The composition of $\lambda_{X}^i$ with the projection
$$
\frac{ H^{2i-1}(X ,\Q_\ell/\Z_\ell(i))}{M^{2i-1}(X)} \twoheadrightarrow \frac{ H^{2i-1}(X ,\Q_\ell/\Z_\ell(i))}{ H^{2i-1}(X ,\Q_\ell (i))},
$$
followed by the injection
$$
-\delta: \frac{ H^{2i-1}(X ,\Q_\ell/\Z_\ell(i))}{ H^{2i-1}(X ,\Q_\ell (i))} \hookrightarrow H^{2i}(X,\Z_\ell(i))
$$
induced by the Bockstein map $\delta$, 
agrees with the cycle class map $\cl_X^i:\CH^i(X)[\ell^\infty]\longrightarrow H^{2i}(X,\Z_\ell(i))$. 
\end{lemma}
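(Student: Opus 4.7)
The plan is to reduce to Lemma \ref{lem:tilde-lambda-vscl} by passing to the direct limit used in the construction of $\lambda_X^i$ in Section \ref{subsec:constr-lambda}. Since $\lambda_X^i$ is defined as the colimit of $\tilde\lambda_{tr}^i$ over $X_{k'}=X_0\times_{k_0}k'$ with $k_0\subset k'\subset k$ finitely generated, it will suffice to establish the compatibility at each finite level and verify that every ingredient commutes with flat base change.

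First, I would enlarge the finitely generated $k_0$ so that the ambient closed embedding descends. Using that $X$ embeds as a closed subscheme into a smooth equi-dimensional algebraic $k$-scheme $Y$, standard spreading out (or, in the quasi-projective case, embedding into a projective space defined over the prime field) produces a finitely generated $k_0\subset k$ together with a closed immersion $X_0\hookrightarrow Y_0$ of algebraic $k_0$-schemes with $Y_0$ smooth and equi-dimensional, such that $X=X_0\times_{k_0}k$ and $Y=Y_0\times_{k_0}k$. For every finitely generated $k_0\subset k'\subset k$, base change then yields a closed embedding $X_{k'}\hookrightarrow Y_{k'}$ with $Y_{k'}$ smooth and equi-dimensional, so Lemma \ref{lem:tilde-lambda-vscl} applies verbatim to $X_{k'}$.

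Second, I would apply Lemma \ref{lem:tilde-lambda-vscl} at each such $k'$ to get $\cl_{X_{k'}}^i=-\delta_{k'}\circ \tilde\lambda_{tr}^i$ on $A^i(X_{k'})[\ell^\infty]$, where $\delta_{k'}$ is the Bockstein attached to $0\to\Z_\ell(i)\to \Q_\ell(i)\to \Q_\ell/\Z_\ell(i)\to 0$. All three operations involved — the Gysin sequence underlying the construction of $\tilde\lambda_{tr}^i$, the cycle class map, and the Bockstein — are natural for the flat pullbacks $X_{k''}\to X_{k'}$, so the above identity is compatible with the transition maps of the system. Taking the filtered colimit produces the equality $\cl_X^i=-\delta\circ \bar\lambda_X^i$ on $\CH^i(X)[\ell^\infty]$, where $\bar\lambda_X^i$ denotes $\lambda_X^i$ followed by the projection
$$
\frac{H^{2i-1}(X,\Q_\ell/\Z_\ell(i))}{M^{2i-1}(X)}\twoheadrightarrow \frac{H^{2i-1}(X,\Q_\ell/\Z_\ell(i))}{H^{2i-1}(X,\Q_\ell(i))}.
$$

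Finally, I would verify that the maps appearing in the statement are well defined. From its very definition as the image of $\colim_{k'}N^{i-1}H^{2i-1}(X_{k'},\Q_\ell(i))$, the subgroup $M^{2i-1}(X)$ lies inside the image of $H^{2i-1}(X,\Q_\ell(i))\to H^{2i-1}(X,\Q_\ell/\Z_\ell(i))$, so the projection displayed above does make sense. The Bockstein long exact sequence attached to $0\to\Z_\ell(i)\to\Q_\ell(i)\to\Q_\ell/\Z_\ell(i)\to 0$ shows that $\delta$ vanishes on the image of $H^{2i-1}(X,\Q_\ell(i))$ and is injective on the quotient, completing the argument. No step here is a serious obstacle: the substantive content lies entirely in Lemma \ref{lem:tilde-lambda-vscl}, and the mildly non-trivial points are the spreading-out of the ambient smooth scheme over a finitely generated field and the verification that the construction of $\tilde\lambda_{tr}^i$ is natural under flat base change, both of which are essentially formal.
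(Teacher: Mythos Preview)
Your proposal is correct and follows the same approach as the paper: the paper's proof is the single sentence ``This is a direct consequence of Lemma~\ref{lem:tilde-lambda-vscl}'', and you have merely spelled out the implicit limit argument (spreading out the ambient smooth scheme, applying Lemma~\ref{lem:tilde-lambda-vscl} at each finitely generated level, and checking naturality under base change). The extra verifications you include---that $M^{2i-1}(X)$ lies in the image of $H^{2i-1}(X,\Q_\ell(i))$ and that $-\delta$ is injective on the quotient---are exactly the easy well-definedness checks the paper leaves to the reader.
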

\begin{proof}
This is a direct consequence of Lemma \ref{lem:tilde-lambda-vscl}.
\end{proof}

\begin{lemma} \label{lem:lambda-z1xz2-2}
Let $X_j$ for $j=1,2$ be smooth projective $k$-varieties and let $\ell$ be a prime invertible in $k$.
Let $z_j\in \CH^{i_j}(X_j)$ be classes such that $z_2$ is $\ell^r$-torsion for some positive integer $r$.
Assume that $\cl^{i_1}_{X_1}(z_1)$ is zero modulo $\ell^r$.
Then the $\ell^r$-torsion cycle
$$
z:=z_1\times z_2\in \CH^{i_1+i_2}(X_1\times X_2)
$$
lies in the kernel of the cycle map $ \lambda_{X}^{i_1+i_2}$ from (\ref{def:lambda}).
\end{lemma}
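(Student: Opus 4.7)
The plan is to reduce the statement to the already-established Corollary \ref{cor:lambda-z1xz2} via a standard spreading-out argument, using the fact that by construction in Section \ref{subsec:constr-lambda} the map $\lambda_X^i$ is the direct limit of the maps $\tilde\lambda_{tr}^i$ over all finitely generated subfields $k_0\subset k'\subset k$, where $k_0$ is a fixed finitely generated field of definition of the whole situation.

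First, I would choose a finitely generated subfield $k_0\subset k$ over which $X_1$, $X_2$, $z_1$, and $z_2$ all descend: smooth projective $k_0$-varieties $X_{1,0},X_{2,0}$ and cycles $z_{j,0}\in\CH^{i_j}(X_{j,0})$ whose base changes to $k$ are the original data. Since both Chow groups and étale cohomology with $\mu_{\ell^r}^{\otimes\ast}$-coefficients of the $X_j$ are filtered colimits over the finitely generated subextensions of $k/k_0$, I can (after enlarging $k_0$ within $k$, keeping it finitely generated) further arrange that $z_{2,0}$ is already $\ell^r$-torsion in $\CH^{i_2}(X_{2,0})$ and that $\cl^{i_1}_{X_{1,0}}(z_{1,0})$ is zero modulo $\ell^r$ in $H^{2i_1}(X_{1,0,\et},\mu_{\ell^r}^{\otimes i_1})$.

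With these finite-level versions of the hypotheses in place, set $z_0:=z_{1,0}\times z_{2,0}\in \CH^{i_1+i_2}(X_{1,0}\times_{k_0}X_{2,0})[\ell^r]$; then Corollary \ref{cor:lambda-z1xz2} applied to $X_{1,0},X_{2,0}$ and $z_{1,0},z_{2,0}$ over $k_0$ immediately gives $\tilde\lambda_{tr}^{i_1+i_2}(z_0)=0$. Passing to the direct limit of $\tilde\lambda_{tr}^{i_1+i_2}$ over all finitely generated $k'$ with $k_0\subset k'\subset k$, the construction of $\lambda_X^{i_1+i_2}$ in Section \ref{subsec:constr-lambda} identifies the image of $z=z_1\times z_2$ under $\lambda_X^{i_1+i_2}$ with the image in $H^{2(i_1+i_2)-1}(X,\Q_\ell/\Z_\ell(i_1+i_2))/M^{2(i_1+i_2)-1}(X)$ of $\tilde\lambda_{tr}^{i_1+i_2}(z_0)$, and hence this image is zero, as desired.

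No part of this argument is a serious obstacle: the spreading-out reduction is routine, based purely on the fact that Chow groups and torsion étale cohomology commute with filtered colimits of base fields, and the real content has already been absorbed into Corollary \ref{cor:lambda-z1xz2} and Lemma \ref{lem:lambda-z1xz2}.
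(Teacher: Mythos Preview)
Your proposal is correct and follows essentially the same approach as the paper: spread out the data to a finitely generated subfield $k_0$, use the colimit description of Chow groups and of $H^{2i_1}(-,\mu_{\ell^r}^{\otimes i_1})$ to enlarge $k_0$ so that the torsion and cycle-class hypotheses already hold over $k_0$, then apply Corollary~\ref{cor:lambda-z1xz2} and pass to the limit via the construction in Section~\ref{subsec:constr-lambda}.
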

\begin{proof}
Let $k_0$ be a finitely generated subfield over which $X_j$ and the cycle $z_j$ are both defined for $j=1,2$.
Since $z_2$ is $\ell^r$-torsion, we may assume that it is already $\ell^r$-torsion on $X_0$, i.e.\ when viewed as a cycle over $k_0$.
Note moreover that
$$
H^{2i_1}(X_1,\mu_{\ell^r}^{\otimes i_1})=\lim_{\substack{\longrightarrow\\ k'/k_0}}H^{2i_1}({X_1}_{k'},\mu_{\ell^r}^{\otimes i_1}) ,
$$
where $k'$ runs through all finitely generated subfields of $k$ that contain $k_0$ and ${X_1}_{k'}$ denotes the base change to $k'$ of a fixed form of $X_1$ over $k_0$.
We can therefore also assume that the cycle class of $z_1$ over $k_0$ is zero modulo $\ell^r$.
The claim in the lemma follows then from Corollary \ref{cor:lambda-z1xz2}. 
\end{proof}

\section{Schoen's argument over non-closed fields}

The following proposition extends (some version of) the main result from \cite{schoen-product} to non-closed fields.

\begin{proposition} \label{prop:schoen}
Let $k$ be an algebraically closed field and let $C$ be a smooth irreducible curve over $k$.
Let $\ell$ be a prime and let $r$ be a positive integer.  
Let $E$ be an elliptic curve over $k(C)$ whose $j$-invariant is transcendental over $k$.
Up to replacing $C$ by a (possibly ramified) finite cover, the following holds for any $k$-variety $B$, where we denote by $K=k(B\times C)$ the function field of $B\times C$: There is a class $\tau \in \CH_0(E_K)$ of order $\ell^r$ such that for any smooth projective variety $Y$ over $k(B)$, the kernel of the exterior product map
$$
\CH^i(Y)\otimes \Z/\ell^r\longrightarrow \CH^{i+1}(Y_K\times_KE_K)[\ell^r],\ \ \ z \mapsto [z_K\times \tau] 
$$
is contained in the image of
$$
\CH^i(Y)_{\tors }\otimes \Z/\ell^r\longrightarrow \CH^i(Y)\otimes \Z/\ell^r .
$$\end{proposition}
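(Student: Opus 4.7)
\emph{Construction of $\tau$.} Let $\mathcal E \to U$ be a smooth projective relative model of $E$ over a non-empty open $U \subset C$. The finite \'etale $U$-scheme $\mathcal E[\ell^r]^\circ$ parametrizing points of exact order $\ell^r$ is non-empty; replacing $C$ by the normalization in the function field of any of its irreducible components yields a tautological point $P \in E(k(C))$ of order exactly $\ell^r$. Setting $\tau := [P] - [O] \in \CH_0(E_{k(C)}) \subset \CH_0(E_K)$, the Abel--Jacobi isomorphism $\CH_0(E_L)_{\deg 0} \cong E(L)$ (valid for any field extension $L/k(C)$ since $E$ is an elliptic curve) identifies $\tau$ with $P \in E(K)$, so $\tau$ has order exactly $\ell^r$.

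\emph{Reduction of the kernel statement.} Suppose $z \in \CH^i(Y)$ satisfies $z_K \times \tau = 0$ in $\CH^{i+1}(Y_K \times_K E_K)$; we must exhibit a torsion class $t \in \CH^i(Y)_{\tors}$ with $z - t \in \ell^r \CH^i(Y)$. Choose a smooth projective model $\mathcal Y \to V$ of $Y$ over an open $V \subset B$, spread $z$ to $\mathcal Z \in \CH^i(\mathcal Y)$, and note that the sections $P, O$ extend $\tau$ to a class $\tilde \tau \in \CH^1(\mathcal E|_U)$. A given rational equivalence $z_K \times \tau \sim 0$ is defined over a finitely generated subfield of $K$, so after possibly shrinking $V$ and $U$ we may assume $\mathcal Z \times \tilde \tau = 0$ already in $\CH^{i+1}(\mathcal Y \times_k \mathcal E|_U)$. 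For any closed point $c \in U(k)$, pulling back along the regular embedding $\mathcal Y \times_k E_c \hookrightarrow \mathcal Y \times_k \mathcal E|_U$ and passing to the generic fiber over $V$ yields
\begin{equation*}
z \times \tau_c = 0 \quad \text{in} \quad \CH^{i+1}(Y \times_k E_c),
\end{equation*}
where $E_c$ is the elliptic curve fiber of $\mathcal E$ at $c$ (viewed over $k$) and $\tau_c = [P_c]-[O_c] \in \CH_0(E_c)$ still has exact order $\ell^r$.

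\emph{Applying Schoen's argument and Galois descent.} Since $j(E)$ is transcendental over $k$, the map $j \colon C \to \A^1_k$ is non-constant, so we may choose $c \in U(k)$ with $j(E_c)$ transcendental over the prime field; in particular $\End(E_{c,\overline{k(B)}}) = \Z$. Schoen's original argument \cite{schoen-product}, which decomposes $\CH^{\bullet}(Y_{\overline{k(B)}} \times_k E_c)$ using the Chow--K\"unneth projectors of $E_c$ (all defined over $k$) and invokes the absence of complex multiplication, then shows that $z_{\overline{k(B)}}$ lies in $\ell^r \CH^i(Y_{\overline{k(B)}}) + \CH^i(Y_{\overline{k(B)}})_{\tors}$. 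The main obstacle is the descent from $\overline{k(B)}$ back to $k(B)$: the $\ell^r$-divisible witness and torsion class produced by Schoen are a priori only defined over $\overline{k(B)}$. We will argue that these representatives may be taken over a finite Galois extension $L/k(B)$, and then exploit the invertibility of $\ell$ in $k(B)$ together with the projection formula for the finite flat morphism $Y_L \to Y$: if $[L:k(B)]=n$, the identity $z = \big(n^{-1}\Tr_{L/k(B)}\big)\,z_L$ (viewed modulo $\ell^r$, after separating the $\ell$-part of $n$ by a further extension of coprime-to-$\ell$ degree) will transform Schoen's decomposition over $L$ into one over $k(B)$, with torsion error absorbed into $\CH^i(Y)_{\tors}$.
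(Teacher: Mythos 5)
Your construction of the order-$\ell^r$ class $\tau$ and the spread-out of the rational equivalence to a model $\mathcal Y \times_k \mathcal E|_U$ are both fine, and the Gysin restriction to a closed fibre $\mathcal Y\times_k E_c$ followed by passage to the generic fibre over $V$ is formally correct. But the heart of your argument is a genuine gap. You invoke ``Schoen's original argument, which decomposes $\CH^\bullet(Y_{\overline{k(B)}}\times_k E_c)$ using the Chow--K\"unneth projectors of $E_c$ and invokes the absence of complex multiplication.'' No such argument exists in \cite{schoen-product}, and none is available: for a \emph{fixed} elliptic curve $E_c$ over an algebraically closed field, the exterior product map $\CH^i(Y)\otimes\Z/\ell^r\to\CH^{i+1}(Y\times E_c)$, $z\mapsto z\times\tau_c$, is not known to be injective modulo torsion, and is not expected to be, no matter what $\End(E_c)$ is. Schoen's proof is a \emph{degeneration} argument: one replaces $C$ by a finite cover so that the minimal regular model $\mathcal E\to C_{k(B)}$ acquires a special fibre of Kodaira type $I_{\ell^r N}$, and one then intersects with a divisor $D$ supported on that degenerate fibre, using the localization sequence for $\mathcal X=Y\times_{k(B)}\mathcal E\to C_{k(B)}$ and the explicit intersection numbers on the cycle of rational curves. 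The transcendence of $j(E)$ over $k$ is used only to guarantee the existence of such a bad fibre after a finite base change of $C$; it is not a ``no CM'' hypothesis on a closed fibre. By specializing from $E_K$ down to $E_c$ you discard precisely the degenerate fibre that makes the argument work, so the resulting statement over $k(B)$ is strictly weaker than what you need and cannot be upgraded by any Chow--K\"unneth considerations. The Galois descent paragraph is moot given this gap, but note it is also sketchy: a transfer $\tfrac{1}{n}\Tr_{L/k(B)}$ argument does not preserve a decomposition $z_L=\ell^r z'_L+t_L$ with $t_L$ torsion, because the transfer of an $\ell^r$-divisible class need not be $\ell^r$-divisible.

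A secondary error: you claim to choose $c\in U(k)$ with $j(E_c)$ transcendental over the prime field. Since $j(E_c)=j(c)\in k$, this is impossible when $k=\overline{\Q}$ or $k=\overline{\F}_p$, which are allowed by the hypotheses. You would instead need a non-CM $j$-invariant; but as noted above, even with that fix the main step would still be unsupported.

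The paper's own proof follows the degeneration route directly: after a finite base change of $C$, it constructs the model $\mathcal X=Y\times_{k(B)}\mathcal E\to C_{k(B)}$ with special fibre $Y\times F$ (where $F$ is the $I_{\ell^r N}$ cycle of rational curves), writes the vanishing cycle $\xi=z\times(s_1-s_0)$ as a sum of pieces supported on fibres via localization, and intersects with $Y\times D$ for a carefully chosen vertical divisor $D$ to extract, after pushforward to $Y$, the relation $mN\cdot z=\ell^r N\cdot z'+\text{(torsion)}$ with $m$ prime to $\ell$. You should replace your ``Applying Schoen's argument and Galois descent'' step by this intersection-theoretic argument on the elliptic surface; the specialization to a single fibre cannot be made to work.
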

\begin{proof}
Replacing $C$ by the normalization of a projective closure, we may assume that $C$ is smooth and projective.
Note that $k$ is algebraically closed and that the $j$-invariant of $E$ is transcendental over $k$.
Using this, the same argument as in \cite[Lemma 2.7]{schoen-product} shows that up to replacing $C$ by a finite cover, we may  assume that   the curve $E_K$ admits a regular projective model $\mathcal E$ over $C_{k(B)}:=C\times_kk(B)$,  with the following properties:
\begin{enumerate}  [label=(\roman*)]
\item $\mathcal E\to C_{k(B)}$ is a minimal elliptic surface over $k(B)$;\label{item:schoen:1}
\item there is a $k$-rational point on $C$ with induced point $0\in C_{k(B)}$, such that the fibre $F$ of $\mathcal E\to C_{k(B)}$ above $0$ is of type $I_{\ell^rN}$ for some $N\geq 1$;\label{item:schoen:2}
\item if we denote by $F_i$, $i=0,1,\dots , \ell^rN-1$ the components of $F$, then $F_i^2=-2$ for all $i$ and  $F_i\cdot F_{i-1}=1$ for all $i$ if $\ell^rN\neq 2$ and $F_0F_1=2$ if $\ell^rN= 2$, where the index has to be read modulo $ \ell^rN$;\label{item:schoen:3}
\item the model $\mathcal E\to C_{k(B)}$ admits two sections $s_0,s_1$ such that $s_1-s_0$ restricts to a zero-cycle $\tau \in \CH_0(E_K)$ of order $\ell^r$ and such that $s_0$ meets $F_0$, while the (unique) component of $F$ that meets $s_1$ is of the form $F_{mN}$ with $m$ coprime to $\ell$.\label{item:schoen:4}
\end{enumerate}
The same argument as in \cite[Lemma 2.8]{schoen-product} then shows that there is a divisor $D$ on $\mathcal E$ which is supported on the special fibre $F$ and such that the following holds:
\begin{enumerate}[label=(\roman*)]
\setcounter{enumi}{4}
\item if $D'$ is another divisor on $\mathcal E$ that is supported on some fibres of $\mathcal E\to C_{k(B)}$, then $D'\cdot D\equiv 0 \mod \ell^r N$;\label{item:schoen:5}
\item  $(s_1-s_0)\cdot D=mN \cdot \chi$ for some integer $m$ that is coprime to $\ell^r$ and some zero-cycle $\chi\in \CH_0(F)$ of degree 1.\label{item:schoen:6}
\end{enumerate}

To conclude the argument, we consider the model
$$
\mathcal X:=Y\times_{k(B)}\mathcal E=(Y\times_{k(B)} C_{k(B)}) \times_{C_{k(B)}}\mathcal E\longrightarrow  C_{k(B)}
$$
with special fibre $X_0=Y\times_{k(B)} F$ and generic fibre $X_\eta=Y_K\times_KE_K$.
Since $Y$ is smooth over $k(B)$ and $\mathcal E$ is regular, the model $\mathcal X$ is regular as well.
Let $z\in \CH^i(Y)$. 
Since $s_1-s_0$ is a divisor class on $\mathcal E$, we can consider the  exterior product cycle
$$
\xi:=z\times (s_1-s_0)\in \CH^{i+1}(\mathcal X).
$$
The restriction  $\xi_\eta\in \CH^{i+1}(Y_K\times_KE_K)$ of this cycle to the generic fibre of $\mathcal X\to C_{k(B)}$ agrees with 
$$
[z_K\times \tau]\in \CH^{i+1}( Y_K\times_KE_K),
$$
where 
$ z_K\in \CH^i(Y_K)$ denotes the base change of $z$ and $\tau\in \CH_0(E_K)$ is the class of order $\ell^r$ from \ref{item:schoen:4}.

To prove the proposition, we assume that $\xi_\eta=0$ and we then aim to show that $z$ is the sum of a torsion class and a class that is $\ell^r$-divisible.
To this end, note that the localization formula together with the assumption $\xi_\eta=0$ implies that there are closed points $c_1,\dots ,c_n\in C_{k(B)}$ with $c_j\neq 0$ for all $j$ such that
$$
\xi\in \im\left(\CH^{i}(X_0)\oplus \bigoplus_j \CH^i(X_{c_j}) \longrightarrow \CH^{i+1}(\mathcal X)\right) .
$$
Hence,
$$
\xi=\xi'+\xi''\ \ \text{where}\ \ \xi'\in  \im\left(\CH^{i}(X_0) \longrightarrow \CH^{i+1}(\mathcal X)\right)\ \ \text{and}\ \  \xi''\in \im\left(  \bigoplus_j \CH^i(X_{c_j}) \longrightarrow \CH^{i+1}(\mathcal X)\right) .
$$
Note that $X_0=Y\times_{k(B)} F$. 
Since $F$ is a cycle of smooth rational curves, the natural map
$$
(\CH^{i}(Y)\otimes \CH^0(F))\oplus (\CH^{i-1}(Y)\otimes \CH^1(F))\longrightarrow \CH^{i}(X_0)
$$
is surjective.
It follows that we can write
$$
\xi'=\sum_j \xi_{1j}\times D_j'+\xi_2\times \zeta
$$
for some $\xi_{1j}\in \CH^{i}(Y)$, $D'_j\in \CH^0(F)$,  $\xi_2\in \CH^{i-1}(Y)$, and $\zeta\in \CH^1(F)$.
Let $Y\times D$ denote the pullback of the divisor $D$ via the natural projection $\pr_2:\mathcal X\to \mathcal E$.
Since $\mathcal X$ is regular, we may consider the intersection product $\xi\cdot (Y\times  D) \in \CH^{i+2}(\mathcal X)$.
Since $D$ is supported on the special fibre $F$,  the support of $\xi''$ is disjoint from the support of $Y\times D$ and so $\xi''\cdot  (Y\times  D)=0$.
Hence, 
$$
\xi\cdot (Y\times  D)=(\sum_j \xi_{1j}\times D_j'+\xi_2\times \zeta)\cdot (Y\times D) \in \CH^{i+2}(\mathcal X).
$$
By item \ref{item:schoen:5}, $(\xi_{1j}\times D'_j)\cdot (Y\times D)\equiv 0\mod \ell^rN $ for all $j$ and so
$$
\xi\cdot (Y\times  D)= \xi_2\times (\zeta\cdot D) =0 \in \CH^{i+2}(\mathcal X)/\ell^rN ,
$$
because $\zeta\cdot D=0$ for degree reasons. 
Hence,
$$
\xi\cdot (Y\times  D)\in \ell^rN\cdot \CH^{i+2}(\mathcal X).
$$
On the other hand, $\xi= z \times(s_1-s_0)$ and so item \ref{item:schoen:6} implies
$$
\xi\cdot (Y\times  D)=z \times mN \chi .
$$
Consider the natural proper map
$$
\pi:\mathcal X=Y\times_{k(B)}\mathcal E \longrightarrow Y
$$
of $k(B)$-varieties.
Then the above computations show that
$$
\pi_\ast (\xi\cdot (Y\times  D))=mN\cdot z =\ell^rN\cdot z' \in \CH^i(Y)
$$
for some $z'\in \CH^i(Y)$.
Hence,
\begin{align}
\label{eq:t}
t:=m \cdot z-\ell^r \cdot z'  \in \CH^i(Y) 
\end{align}
is $N$-torsion.
Since $m$ is coprime to $\ell$, we find that the class of $z$ in $\CH^i(Y)/\ell^r$ may be represented by a torsion class, as we want.
This concludes the proof of the proposition.
\end{proof}

For later use we record here the following variant of the above argument.

\begin{proposition} \label{prop:schoen-2}
Let $k$ be an arbitrary field.
Let $Y$ be a smooth projective $k$-variety. 
Then there is an elliptic curve $E$ over $K=k(\CP^1)$ (in fact the Legendre elliptic curve $y^2=x(x-1)(x-t)$) and a class $\tau\in\CH_{0}(E)$ of order $2$ such that the exterior product map
$$
\CH^{i}(Y)\otimes\Z/2\longrightarrow\CH^{i+1}(Y_K\times_KE)[2],\ \  z \mapsto [z_K\times \tau]
$$ is injective.
\end{proposition}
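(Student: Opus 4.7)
The plan is to adapt the intersection-theoretic argument in the proof of Proposition~\ref{prop:schoen} to the Legendre family. The key simplification is that the $j$-invariant of $E$ is already transcendental over $k$, eliminating the need for an auxiliary base $B$ or a finite cover of the base curve $C$; moreover, the explicit geometry lets us arrange the parameter $N$ of \emph{loc.\ cit.}\ to equal $1$, which upgrades the conclusion from ``kernel is torsion mod $\ell^r$'' to honest injectivity modulo $2$.

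First, I would fix $\tau := [(0,0)] - [O] \in \CH_0(E)$; this has order $2$ since $(0,0)$ is a 2-torsion $K$-rational point of $E$ distinct from $O$. Next I construct the minimal regular model $\mathcal{E} \to \CP^1_k$ of $E$. The discriminant of the Weierstrass equation is $16\,t^2(t-1)^2$, so the total space has an ordinary double point above each of $t = 0, 1, \infty$, and resolving produces Kodaira fibres of type $I_2$. Above $t = 0$, the two components are the strict transform $F_0$ of the nodal Weierstrass fibre $y^2 z = x^2(x - z)$ and the $(-2)$-exceptional divisor $F_1$, both defined over $k$. The zero section $s_0$ misses the node in the Weierstrass model and so meets $F_0$ transversally at a $k$-rational point $p_0$ (the identity of the fibre group), while the strict transform $s_1$ of the 2-torsion section $(x, y) = (0, 0)$---which in the Weierstrass model passes through the node---meets the exceptional component $F_1$ transversally.

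Now I set $\mathcal{X} := Y \times_k \mathcal{E}$, which is regular since $Y \to \Spec k$ is smooth and $\mathcal{E}$ is regular, and I take $D := F_0$. Using that $F_0 \cong \CP^1_k$ collapses every 0-cycle on $F_0$ to a multiple of $[p_0]$, a short computation yields $D' \cdot D = 2(b - a)[p_0] \in 2 \cdot \CH_0(\mathcal{E})$ for any divisor $D' = aF_0 + bF_1$ supported on the fibre $F = F_0 + F_1$, while divisors supported on other fibres of $\mathcal{E} \to \CP^1_k$ have zero intersection with $D$; moreover, $(s_1 - s_0) \cdot D = -[p_0]$. This is the content of the analogues of items (v), (vi) in the proof of Proposition~\ref{prop:schoen} with $\ell^r N = 2$ and (crucially) $N = 1$. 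Given $z \in \CH^i(Y)$ with $z_K \times \tau = 0$ in $\CH^{i+1}(Y_K \times_K E)$, the cycle $\xi := z \times (s_1 - s_0) \in \CH^{i+1}(\mathcal{X})$ has trivial restriction to the generic fibre of $\mathcal{X} \to \CP^1_k$, so the localisation sequence together with the exterior-product surjection onto $\CH^*(X_0)$ yields $\xi \cdot (Y \times D) \in 2 \cdot \CH^{i+2}(\mathcal{X})$, exactly as in the proof of Proposition~\ref{prop:schoen}.

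Finally, a direct computation gives $\xi \cdot (Y \times F_0) = -\,(z \times [p_0])$, and pushing forward along the projection $\pi : \mathcal{X} \to Y$---using that $p_0$ is $k$-rational, so $\pi_*(z \times [p_0]) = z$---one concludes $z \in 2 \cdot \CH^i(Y)$, which is precisely the injectivity of $z \mapsto [z_K \times \tau]$ on $\CH^i(Y) \otimes \Z/2$. The main point to verify is that one can honestly realise $N = 1$ here---that the Kodaira type at $t = 0$ is $I_2$ and not $I_{2n}$ for some $n > 1$, and that $s_0$ and $s_1$ lie on \emph{different} components of this $I_2$ fibre after resolution---both of which follow from the explicit local form of the Weierstrass equation and the resolution of its $A_1$ singularity.
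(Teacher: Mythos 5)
Your proof is correct and follows essentially the same route as the paper: both reduce to the intersection-theoretic argument of Proposition~\ref{prop:schoen}, observing that for the Legendre family the special fibre at $t=0$ is $I_2$, forcing $N=1$ and upgrading the conclusion from ``torsion modulo $\ell^r$'' to genuine injectivity modulo $2$. The paper states this reduction tersely, whereas you work out the geometry explicitly: identifying $\tau=[(0,0)]-[O]$, verifying that the zero section stays on the strict transform $F_0$ while the $(0,0)$-section passes through the node and hence lands on the exceptional $(-2)$-curve $F_1$ after resolving the $A_1$ singularity, taking $D=F_0$, and computing $D'\cdot D=2(b-a)[p_0]$ and $(s_1-s_0)\cdot D=-[p_0]$ so that the pushforward $\pi_\ast\bigl(\xi\cdot(Y\times D)\bigr)=-z$ lies in $2\cdot\CH^i(Y)$. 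This added detail is a welcome unpacking of what the paper invokes implicitly, and the final step (deducing $z\in 2\CH^i(Y)$ directly rather than via the vanishing of a $1$-torsion class) is a minor streamlining; there is no substantive gap.
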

\begin{proof}
We let $E$ be the generic fibre of the Legendre family $y^2=x(x-1)(x-t)$, where $t$ denotes an affine coordinate on $\CP^1$.
Let $\mathcal E\to \CP^1$ be a minimal elliptic surface whose generic fibre is $E$.
The special fibre $F$ above $t=0$ is then of type $I_2$: $F=F_0\cup F_1$ with $F_j^2=-2$ and $F_0\cdot F_1=2$.
Note further that each $2$-torsion point of $E$ is $K$-rational (see \cite{Igusa}) and it extends to a section of $\mathcal E\to \CP^1$.
It follows in particular that there are sections $s_0,s_1$ of $\mathcal E\to \CP^1$ such that $s_j$ meets $F_j$ and such that $s_1-s_0$ restricts to a zero-cycle $\tau\in \CH_0(E)$ of order $2$.
Let $z\in \CH^i(Y)$ be nonzero modulo $2$.
Let $z_K\in \CH^i(Y_K)$ denote the base change of $z$.
To prove the proposition, it is then enough to show that the $2$-torsion class
$$
z_K\times \tau \in \CH^{i+1}(Y_K\times_KE)
$$
is nonzero.
This follows by exactly the same argument as in the proof of Proposition \ref{prop:schoen} above.
The main difference is that in the current situation, the special fibre $F$ is of type $I_2$ and so the integer $N$ in the proof of Proposition \ref{prop:schoen} equals $1$, so that the class $t$ in (\ref{eq:t}) is not only torsion, but in fact zero.
\end{proof}

\begin{remark} Let $C$ be a smooth irreducible projective curve over a field $k$ and let $\ell$ be a prime. The proof of Proposition \ref{prop:schoen-2} shows more generally that if $E$ is an elliptic curve over $K=k(C)$ whose $\ell$-torsion points are all $K$-rational and in addition there is a $K$-rational point $0\in C$ so that the fibre $F$ of the minimal model $\mathcal E\to C$ above $t=0$ is of type $I_{\ell}$, then there is a class $\tau\in\CH_{0}(E)$ of order $\ell$ such that for any smooth projective $k$-variety $Y$ the exterior product map
$$
\CH^{i}(Y)\otimes\Z/\ell\longrightarrow\CH^{i+1}(Y_K\times_KE)[\ell],\ \  z \mapsto [z_K\times \tau]
$$ is injective.\par A concrete example for the prime $\ell=3$ is given by the elliptic curve $E$ over $\mathbb{Q}(\omega)(\CP^{1})$ $(\omega^{3}=1,\omega\neq 1)$ defined by the degree $3$ equation $$x^{3}_{0}+x^{3}_{1}+x^{3}_{2}=3\lambda x_{0}x_{1}x_{2}$$ in $\CP^{2}$; see \cite{Igusa}. \end{remark}

\section{A construction of Koll\'ar, Hassett--Tschinkel, and Totaro}

In this section we use some arguments of Koll\'ar \cite{kollar92}, Hassett--Tschinkel (cf.\ \cite[Introduction]{totaro-IHC-2}) 
and Totaro \cite{totaro-IHC-2} to prove the following:

\begin{proposition}\label{prop:kollar}
Let $k$ be an algebraically closed field and let $\ell$ be a prime invertible in $k$.
If the characteristic of $k$ is positive, assume that $k$ has positive transcendence degree over its prime field.
There is smooth projective hypersurface $S\subset \CP^3_{k(\CP^1)}$ such that the cycle class map
$$
\CH_0(S)/\ell\longrightarrow H^4(S,\mu_{\ell}^{\otimes 2})
$$
is not injective up to torsion.
That is, there is a zero-cycle $z\in \CH_0(S) $ with trivial cycle class in $H^4(S,\mu_{\ell}^{\otimes 2})$ such that $z\notin \CH_0(S)_{\tors}+\ell\cdot \CH_0(S)$.
\end{proposition}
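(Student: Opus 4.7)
The plan is to realize $S$ as the generic fiber of a suitable family of surfaces over $\CP^{1}$ and deduce the assertion from a failure of the integral Hodge/Tate conjecture for $1$-cycles on the $3$-dimensional total space, which is exactly the subject of the Koll\'ar--Hassett--Tschinkel--Totaro circle of ideas. More concretely, I would fix a positive integer $d$ divisible by $\ell$ (large enough so that the hypotheses of the Koll\'ar-type theorems apply) and take $X \subset \CP^{3}_{k} \times \CP^{1}_{k}$ to be a very general smooth hypersurface of bidegree $(d,1)$ with second projection $f \colon X \to \CP^{1}_{k}$. The generic fiber $S := X_{\eta}$ is then a smooth hypersurface of degree $d$ in $\CP^{3}_{K}$ with $K = k(\CP^{1})$, as required. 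The hypothesis on transcendence degree in positive characteristic is used here to ensure that ``very general'' makes sense, i.e.\ that we can avoid a countable family of bad loci in the parameter space.

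Next I would invoke the Koll\'ar--Hassett--Tschinkel--Totaro theorem, which via a degeneration to a cyclic cover of $\CP^{3}$ and a specialization argument produces, for a suitable such $X$, a $1$-cycle $C$ on $X$ whose class satisfies
\[
\cl_{X}^{2}([C]) \in \ell \cdot H^{4}(X, \Z_{\ell}(2)), \qquad [C] \notin \ell \cdot \CH^{2}(X)_{\Z_{\ell}} + \CH^{2}(X)_{\tors}.
\]
Equivalently, $C$ witnesses the fact that the image of $\cl_{X}^{2}$ in $H^{4}(X, \Z_{\ell}(2))/\tors$ is a strict sublattice of the sublattice of integral Hodge/Tate classes (whose generator fails to be algebraic). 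I would then set $z := C|_{S} \in \CH_{0}(S)$. By functoriality of the cycle class map with respect to the open immersion $S \hookrightarrow X$, the class $\cl_{S}^{2}(z)$ is $\ell$-divisible in $H^{4}(S, \Z_{\ell}(2))$, hence its image in $H^{4}(S, \mu_{\ell}^{\otimes 2})$ vanishes.

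The main obstacle, and the heart of the proof, is showing that $z \notin \ell \CH_{0}(S) + \CH_{0}(S)_{\tors}$. Were this to fail, writing $z = \ell w + t$ on $S$ and lifting $w$ to some $W \in \CH^{2}(X)$ and $t$ to a torsion class $T$, the difference $[C] - \ell[W] - T$ would land in the kernel of the restriction map $\CH^{2}(X)_{\Z_{\ell}} \to \CH_{0}(S)_{\Z_{\ell}}$. By the localization sequence for $f \colon X \to \CP^{1}$, this kernel is generated by pushforwards of divisor classes from the closed fibers $X_{b}$. Exploiting the Noether--Lefschetz-type property that for the very general $X$ constructed above every smooth closed fiber $X_{b}$ has Picard group $\Z \cdot h_{b}$, one sees that these fiber divisor contributions have cycle class in $H^{4}(X, \Z_{\ell}(2))$ contained in the sublattice generated by restrictions of pullbacks from $\CP^{3}_{k} \times \CP^{1}_{k}$; since the Koll\'ar--Hassett--Tschinkel--Totaro construction produces $[C]$ failing $\ell$-divisibility precisely modulo this sublattice, one obtains the desired contradiction.

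The delicate point in this plan is thus not the construction of $X$ and $C$, which is available from the literature, but rather the precise bookkeeping in the last paragraph: identifying the kernel of $\CH^{2}(X) \to \CH_{0}(S)$ modulo $\ell$ with the explicit sublattice coming from hyperplane classes on fibers, and matching this with the sublattice modulo which the Koll\'ar--Hassett--Tschinkel--Totaro obstruction is formulated. This is essentially a Picard--Lefschetz style argument applied to the family $X \to \CP^{1}$ and is where the genericity of $X$ (and hence the transcendence degree hypothesis) plays its decisive role.
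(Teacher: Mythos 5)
Your high-level strategy (realize $S$ as the generic fiber of a family over $\CP^1$, exploit failure of the integral Hodge/Tate conjecture in the Koll\'ar--Hassett--Tschinkel--Totaro circle, restrict to $S$) is the right one, and is close in spirit to what the paper does. However, several key steps as you set them up would fail, and the crucial elementary reduction that makes the argument close is missing.

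\textbf{The bidegree $(d,1)$ is wrong.} If $X\subset\CP^3\times\CP^1$ has bidegree $(d,1)$, its equation is linear in the $\CP^1$-coordinate $t$, say $f(x)+t\,g(x)=0$, so over any field you can solve for $t$ once $x$ is chosen; the Laurent-series specialization (the mechanism behind these Koll\'ar-type results) then gives no information. The paper uses a hypersurface of bidegree $(3,\ell^2)$ with an explicit equation of the form $u^3x_0^{\ell^2}+tu^2x_1^{\ell^2}+t^2ux_2^{\ell^2}+t^3x_3^{\ell^2}+\lambda(\cdots)=0$; the degree $3$ in $(u:t)$ is essential so that, after specializing $\lambda\to 0$ and reducing to $F((t))$, the four monomials $t^ix_i^{\ell^2}$ have pairwise distinct $s$-adic valuations mod $\ell^2$ over any extension $F((s))/F((t))$ of degree prime to $\ell$, forcing $x_i=0$ for all $i$. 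With an equation linear in $t$ there are too few terms for this to have any content. Similarly, ``$d$ divisible by $\ell$'' is not the hypothesis under which these theorems run; one needs $d=\ell^2$ (or a comparable arithmetic constraint).

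\textbf{What the Koll\'ar-type input actually gives, and the missing bridge.} You attribute to these theorems the existence of a $1$-cycle $C$ on $X$ with $\cl_X^2([C])\in\ell H^4(X,\Z_\ell(2))$ and $[C]\notin\ell\CH^2(X)_{\Z_\ell}+\CH^2(X)_{\tors}$. That is not what they say: they control the image of the cycle class map (failure of integral Hodge/Tate), not divisibility of specific cycles in the Chow group. The bridge the paper builds is the identity that the cokernel of $\cl^2_{\mathcal X}$ is exactly $\Z_\ell/n\Z_\ell$, where $n=\gcd\{\deg(z)\mid z\in\CH_0(S)\}$ is the \emph{index} of the generic fiber $S$ (Step 1 in the paper, via Weak Lefschetz plus an explicit cycle $\tilde\ell_2$ and the degree map to $\CP^1$). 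The Koll\'ar-type degeneration (Step 2) is then applied to show $\ell\mid n$, i.e.\ $S$ has no zero-cycle of degree prime to $\ell$ over any finite extension of $k(\CP^1)$. That is the precise statement you need, and it is a statement about $S$, not about $1$-cycles on $X$.

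\textbf{The final step should be a one-line degree argument, not Noether--Lefschetz.} Once you know $\ell\mid n$, take a cycle $\alpha\in\CH^2(\mathcal X)$ with $\cl^2_{\mathcal X}(\alpha)=n\ell_1$ and set $z:=\alpha|_S$, so $\deg(z)=n$. Then $\cl^2_S(z)$ dies mod $\ell$ because $n\ell_1$ is $\ell$-divisible in $H^4(\mathcal X,\Z_\ell(2))$. And if $z=z_1+\ell z_2$ with $z_1$ torsion, taking degrees gives $n=\ell\deg(z_2)$; since $\deg(z_2)$ is itself a multiple of the index $n$, this forces $1=\ell m$, a contradiction. Your plan replaces this with a localization sequence for $f\colon X\to\CP^1$ plus a Noether--Lefschetz hypothesis ($\Pic(X_b)=\Z h_b$ for all closed $b$) and a vague ``matching of sublattices''. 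Apart from being much more delicate (and not verified: it is unclear that very-generality in the needed Noether--Lefschetz sense is compatible with the special form required for the degeneration), it is not needed. The degree computation bypasses any understanding of the Chow groups of the closed fibers. This is the central missing idea in your proposal: the Koll\'ar-type failure should be packaged as a statement about the index of $S$, after which the contradiction is purely numerical.
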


\begin{proof}
Our examples arise due to the failure of the integral Hodge and Tate conjectures for certain hypersurfaces in $\CP_{k}^{1}\times_{k}\CP_{k}^{3}$ of bidegree $(3,\ell^{2})$. 
To this end, let $\lambda\in k$ be transcendental over the prime field if $k$ has positive characteristic, and let $\lambda:=\ell$ if $k$ has characteristic zero.
We then consider the smooth hypersurface $\mathcal{X}\subset \CP^{1}_{u,t}\times\CP^{3}_{x_{0},x_{1},x_{2},x_{3}}$ over $k$, defined by the equation
$$
u^{3}x^{\ell^{2}}_{0}+tu^{2}x^{\ell^{2}}_{1}+t^{2}ux^{\ell^{2}}_{2}+t^{3}x^{\ell^{2}}_{3}+\lambda(u^{3}x^{\ell^{2}}_{3}-t^{3}x^{\ell^{2}}_{0}+u^{3}x^{\ell^{2}}_{2}-t^{3}x^{\ell^{2}}_{1})=0 .
$$ 

Let $S:=\mathcal{X}_{\eta}$ be the generic fibre of the first projection $\pr_{1}\colon \mathcal{X}\to\CP_{k}^{1}$. We aim to show that the cycle class map $\cl^{2}_{S}\colon\CH_0(S)/\ell\longrightarrow H^4(S,\mu_{\ell}^{\otimes 2})$ is not injective up to torsion. 

\setcounter{step}{0}
\begin{step}\label{st:step1} Let $n:=\gcd\{\deg(z)|z\in\CH_{0}(S)\}$ be the index of $S$. 
If $\mathcal{Z}^{4}(\mathcal{X})$ denotes the cokernel of the cycle class map $\cl^{2}_{\mathcal{X}}\colon\CH^{2}(\mathcal{X})_{\Z_\ell}\to H^{4}(\mathcal {X},\Z_{\ell}(2))$, then $\mathcal{Z}^{4}(\mathcal{X})\cong\Z_{\ell}/n\Z_{\ell}$.

\begin{proof} 
We have the following commutative diagram \begin{center} \begin{tikzcd}
\CH^{3}(\CP^{1}\times_{k}\CP^{3})_{\Z_\ell} \arrow[r, "\cl^{3}"]                         & {H^{6}(\CP^{1}\times_{k}\CP^{3},\Z_{\ell}(3))}           \\
\CH^{2}(\mathcal{X})_{\Z_\ell} \arrow[r, "\cl^{2}_{\mathcal{X}}"] \arrow[u, "i_{*}"] & {H^{4}(\mathcal{X},\Z_{\ell}(2))}, \arrow[u, "i_{*}"]
\end{tikzcd}\end{center} where $i_{*}\colon H^{4}(\mathcal{X},\Z_{\ell}(2))\to H^{6}(\CP^{1}\times_{k}\CP^{3},\Z_{\ell}(3))$ is an isomorphism by the Weak Lefschetz theorem \cite[Theorem VI.7.1]{milne}. 
Recall that $H^{6}(\CP^{1}\times_{k}\CP^{3},\Z_{\ell}(3))$ is the free $\Z_{\ell}$-module of rank $2$ generated by $\ell_{1}:=\cl^{3}(\CP_{k}^{1}\times pt)$ and $\ell_{2}:=\cl^{3}(pt\times L)$, where $L\subset\CP^{3}_{k}$ is any one-dimensional linear subspace. 
Let $L\subset\CP^{3}_{k}$ be the subspace defined by the equations $x_{0}=0$ and $x_{3}=\zeta x_{2}$, where $\zeta\in k$ with $\zeta^{\ell^{2}}=-1$. 
An easy check shows $(1:0)\times L\subset\mathcal{X}$. The $1$-cycle $\tilde{\ell_{2}}:=[(1:0)\times L]\in\CH^{2}(\mathcal{X})$ clearly then satisfies $i_{*}\cl^{2}_{\mathcal{X}}(\tilde{\ell_{2}})=\ell_{2}$. Pick $z\in\CH^{2}(\mathcal{X})$ such that $\deg(z|_{S})=n$. 
By replacing $z$ with $z':=z-m\tilde{\ell_{2}}$, where the integer $m$ is determined by ${\pr_{2}}_{*}(z)=m[L]$ with $\pr_2:\mathcal X\to \CP^3$, we may assume that ${\pr_{2}}_{*}(z)=0$. It follows that $i_{*}\cl^{2}_{\mathcal{X}}(z)=n\ell_{1}$. 
To conclude we notice that the image of the composite 
$$
\CH^{2}(\mathcal{X})_{\Z_{\ell}}\longrightarrow H^{4}(\mathcal{X},\Z_{\ell}(2))\overset{{\pr_{1}}_{*}}\longrightarrow  H^{0}(\CP_{k}^{1},\Z_{\ell}(0))=\Z_{\ell}
$$ 
is $n\Z_{\ell}$, where $n$ is the index of $S$.
\end{proof}
\end{step}

\begin{step}\label{st:step2}
We have $n\in\ell\Z_{\ell}$.
\end{step}

\begin{proof} 
The proof is essentially the same as in \cite[Theorem 2.1]{totaro-IHC-2}, where the case $\ell=2$ and $k=\bar{\mathbb{Q}}$ is treated. 
We briefly recall the argument for the readers convenience.

Note that $\mathcal{X}$ degenerates via $\lambda\to 0$ to the hypersurface 
$$
u^{3}x^{\ell^{2}}_{0}+tu^{2}x^{\ell^{2}}_{1}+t^{2}ux^{\ell^{2}}_{2}+t^{3}x^{\ell^{2}}_{3}=0,
$$ 
in $\CP^{1}\times\CP^{3}$ over some algebraically closed field extension $F/\bar{\mathbb{F}}_{q}$, where $q=\ell$ if $\Char(k)=0$ and $q=\Char(k),$ otherwise.
A straightforward specialization argument enables us to reduce our task to proving that the hypersurface 
$$
Y:=\{x^{\ell^{2}}_{0}+tx^{\ell^{2}}_{1}+t^{2}x^{\ell^{2}}_{2}+t^{3}x^{\ell^{2}}_{3}=0\}
$$ 
in $\CP^{3}$ over $F((t))$ has no rational point over any extension $F((s))$ of $F((t))$ whose degree is not divisible by $\ell$. 
For a contradiction, we assume that there is an extension $F((s))/F((t))$ of degree $d$ with $\gcd(d,\ell)=1$ and Laurent series $t(s),x_{i}(s)\in F((s))$ satisfying the equation of $Y$. 
Then the valuations of the $4$ terms in the equation, given by $\ord_{s}(t^{i}x^{\ell^{2}}_{i})\equiv id \mod \ell^{2}$, are all different, forcing all the $x_{i}(s)$ to be zero. 
This certainly cannot be a point of the projective space, contradicting our hypothesis.
\end{proof} 

Finally we are in the position to conclude the proof of Proposition \ref{prop:kollar}. 
By Step \ref{st:step1} we can pick a $1$-cycle $\alpha\in\CH^{2}(\mathcal{X})$ such that $\cl^{2}_{\mathcal{X}}(\alpha)=n\ell_{1}$. 
We set $z:=\alpha|_{S}\in\CH^{2}(S)$ and note that $\cl^{2}_{S}(z)=0\in H^{4}(S,\mu_{\ell}^{\otimes 2})$ because $n\in\ell\Z_{\ell}$ by Step \ref{st:step2}. 
We claim that $z\notin\CH^{2}(S)_{\tors}+\ell\CH^{2}(S)$. 
Indeed, if we write $z=z_{1}+\ell z_{2}$ for some $z_{1}\in\CH^{2}(S)_{\tors}$ and $z_{2}\in \CH^{2}(S)$, then $n=\deg(z)=\ell\deg(z_{2})$. 
This yields a contradiction, as the integer $n$ is the index of $S$. 
\end{proof}

\section{Proof of the main results}

\begin{proof}[Proof of Theorem \ref{thm:Sch}]The injectivity of $\lambda_X^i$ for $i=1,2$ follows from Theorem \ref{thm:lambda-body}.
The isomorphisms in question then follow from the description of the image of $\lambda_X^i$ in Corollary \ref{cor:lambda-body}.
In the case $i=1$, we note in addition that $N^0H^i(X,A(n))=H^i(X,A(n))$.
\end{proof}

\begin{proof}[Proof of Theorem \ref{thm:main}]
Let $\ell$ be a prime and let $k$ be a field of characteristic different from $\ell$.
We aim to construct a finitely generated field extension $K/k$ and a smooth projective threefold $X$ over $K$ such that
$$
\lambda_X^3:\CH^3(X)[\ell^\infty]\longrightarrow H^5(X,\Q_\ell/\Z_\ell(2))/M^5(X)
$$
is not injective.
To prove this, we are free to replace $k$ by a finitely generated field extension and so we can assume that $k$ has positive transcendence degree over its prime field.
Lemma \ref{lem:reduction-from-k-to-k'} allows us to reduce further to the case where $k$ is algebraically closed and of positive transcendence degree over its prime field.
It then follows from Proposition \ref{prop:kollar} that there is a smooth projective hypersurface $S\subset \CP^3_{k(\CP^1)}$ 
which carries a zero-cycle $z_1\in \CH_0(S)$ with trivial cycle class in $ H^4(S,\mu_{\ell}^{\otimes 2})$ such that 
\begin{align} \label{eq:z1-prop-kollar}
    z_1\notin \CH_0(S)_{\tors}+\ell\cdot \CH_0(S) .
\end{align}
Let $C$ be any smooth curve over $k$ and let $E$ be an elliptic curve over $k(C)$ whose $j$-invariant is transcendental over $k$ (e.g.\ we could take $C=\CP^1$ and let $E$ be the Legendre elliptic curve). 
We then apply Proposition \ref{prop:schoen} to $C$, $E$, $B:=\CP^1$, and to the $k(B)$-variety $Y:=S$.
We let $K:=k(B\times C)$ and find that up to replacing $C$ by a finite cover, there is a zero-cycle $\tau\in \CH^1(E_K)$ of order $\ell$ such that
the kernel of the exterior product map
\begin{align} \label{eq:exterior-product}
\CH^2(S)\otimes \Z/\ell \longrightarrow \CH^{3}(S_K\times_K E_K)[\ell ],\ \ \ z \mapsto [z_K\times \tau ] 
\end{align}
is contained in the image of
$$
\CH^2(S)_{\tors }\otimes \Z/\ell\longrightarrow \CH^2(S)\otimes \Z/\ell .
$$

We finally set $X:= S_{K}\times_{K}E_{K}$ and consider the zero-cycle 
$$
z:=[{z_1}_K\times \tau]\in \CH^3(X).
$$
Since $\tau$ has order $\ell$, the zero-cycle $z$ is $\ell$-torsion.
By (\ref{eq:z1-prop-kollar}) and the above description of the kernel of the exterior product map in (\ref{eq:exterior-product}), we find that $z$ is nontrivial.
On the other hand, since the cycle class of $z_1$ in $ H^4(S,\mu_{\ell}^{\otimes 2})$ vanishes while $\tau$ is $\ell$-torsion, Lemma \ref{lem:lambda-z1xz2-2} implies that
$$
\lambda_X^3(z)=0\in H^5(X,\Q_\ell/\Z_\ell(2))/M^5(X).
$$
This concludes the proof of the theorem.
\end{proof}

\begin{proof}[Proof of Corollary \ref{cor:lambda_X^i-injective}]
One direction is Theorem \ref{thm:Sch}.
The other direction follows from Theorem \ref{thm:main} by taking products with projective spaces.
\end{proof}

\begin{proof}[Proof of Corollary \ref{cor:integral-Jannsen}]
This follows from Lemma \ref{lem:lambda-cl} and Theorem \ref{thm:main}.
\end{proof}


\begin{proof}[Proof of Theorem \ref{thm:main-2}]
By \cite[Theorem 8.5]{parimala-suresh}, there is a smooth projective surface $Y$ (given as a conic fibration  over a hyperelliptic curve) over a field $k$ such that $Y(k)\neq \emptyset$ and such that the cycle class map $\CH^2(Y)/2\to H^4(Y,\mu_{2}^{\otimes 2})$ is not injective.
In \textit{loc.\ cit.}, the example is defined over $\Q_3$; a straightforward limit argument then allows us to assume that $k\subset \Q_3$ is a finitely generated subfield. 
We pick a zero-cycle $z\in\CH^{2}(Y)$ that is nonzero modulo $2$ but its cycle class $\cl^{2}_{Y}(z)$ is zero modulo $2$. 
Let now $K=k(\CP^1)$. 
Then Proposition \ref{prop:schoen-2} implies that there is an elliptic curve $E$ over $K$ (in fact the Legendre elliptic curve) and a class $\tau\in\CH_{0}(E)$ of order $2$ such that the cycle $z_{K}\times\tau\in\CH^{3}(X)[2]$ is nonzero, where $X$ is the smooth projective threefold $Y_K\times_K E$. It follows from Lemma \ref{lem:lambda-z1xz2-2} that $\lambda_X^3(z_{K}\times\tau)=0$, proving the non injectivity of $\lambda_X^3$ for the prime $\ell=2$.
Since $E$ is an elliptic curve, it has a rational point and since $Y$ has a rational point as well, we find that $X(K)\neq \emptyset$, as we want.
\end{proof}

 \section*{Acknowledgements} 
 We thank the referee for his or her useful comments. This project has received funding from the European Research Council (ERC) under the European Union's Horizon 2020 research and innovation programme under grant agreement No 948066 (ERC-StG RationAlgic).\\
 \par \textbf{Conflicts of interest:} none.


\end{document}